\documentclass{article}
\usepackage[ansinew]{inputenc}          
\usepackage{amssymb,amsthm,amsmath}     
\usepackage{psfrag}                     
\usepackage[dvips]{graphicx,gincltex}            
\usepackage[all]{xy}
\usepackage{subfig}
\usepackage{algorithm,algorithmic}
\usepackage{a4wide}
\usepackage{caption}
\usepackage{centernot}
\usepackage{array,colortbl,xcolor}

\theoremstyle{plain}
\newtheorem{theorem}{Theorem}

\newtheorem{lemma}[theorem]{Lemma}
\newtheorem{corollary}[theorem]{Corollary}
\theoremstyle{definition}
\newtheorem{definition}[theorem]{Definition}
\newtheorem{example}[theorem]{Example}

\newtheorem{observation}[theorem]{Observation}
\newtheorem{conjecture}[theorem]{Conjecture}

\newcommand{\Z}{\mathbb{Z}}
\newcommand{\F}{\mathbb{F}}

\newcommand{\Ext}{\text{Ext}}

\newcommand{\nolla}{\mathbf{0}}

\usepackage[dvips]{epsfig}
\usepackage{tikz}
\usetikzlibrary{matrix}
\usetikzlibrary{positioning}
\tikzset{main node/.style={circle,fill=blue!20,draw,minimum size=0.2cm,inner sep=0pt},
            }

\begin{document}

\tikzset{square matrix/.style={
    matrix of nodes,
    column sep=-\pgflinewidth, row sep=-\pgflinewidth,
    nodes={draw,
      minimum height=#1,
      anchor=center,
      text width=#1,
      align=center,
      inner sep=0pt
    },
  },
  square matrix/.default=0.5cm
}

\tikzset{bsquare matrix/.style={
    matrix of nodes,
    column sep=-\pgflinewidth, row sep=-\pgflinewidth,
    nodes={draw,
      minimum height=#1,
      anchor=center,
      text width=#1,
      align=center,
      inner sep=0pt
    },
  },
  bsquare matrix/.default=0.6cm
}

\tikzset{big square matrix/.style={
    matrix of nodes,
    column sep=-\pgflinewidth, row sep=-\pgflinewidth,
    nodes={draw,
      minimum height=#1,
      anchor=center,
      text width=#1,
      align=center,
      inner sep=0pt
    },
  },
  big square matrix/.default=0.88cm
}

\tikzset{huge square matrix/.style={
    matrix of nodes,
    column sep=-0, row sep=-0,
    nodes={draw,
      minimum height=#1,
      anchor=center,
      text width=#1,
      align=center,
      inner sep=0pt
    },
  },
  huge square matrix/.default=3.52cm
}

\title{Solving a Conjecture on Identification in Hamming Graphs\footnote{A shortened version \cite{JLLICGT18} of the paper has been accepted to the 10th International Colloquium on Graph Theory and combinatorics, Lyon, 2018.}}

\author{\textbf{Ville Junnila}, \textbf{Tero Laihonen} and \textbf{Tuomo Lehtil{\"a}}\thanks{Research supported by the University of Turku Graduate School (UTUGS).}\\
Department of Mathematics and Statistics\\
University of Turku, FI-20014 Turku, Finland\\
viljun@utu.fi, terolai@utu.fi and tualeh@utu.fi}

\date{}
\maketitle

\begin{abstract}
Identifying codes in graphs have been widely studied since their introduction by Karpovsky, Chakrabarty and Levitin in 1998. In particular, there are a lot of results regarding the binary hypercubes, that is, the Hamming graphs $K_2^n$. In 2008, Gravier  \emph{et al.} started investigating identification in $K_q^2$. Goddard and Wash, in 2013, studied identifying codes in the  general Hamming graphs $K_q^n$. They stated, for instance, that $\gamma^{ID}(K_q^n)\leq q^{n-1}$ for any $q$ and $n\geq3$. Moreover, they conjectured that $\gamma^{ID}(K_q^3)=q^2$.  In this article, we show that $\gamma^{ID}(K_q^3)\leq q^2-q/4$ when $q$ is a power of four, disproving the conjecture.  Our approach is based on the recursive use of suitable designs. Goddard and Wash also gave  the following lower bound  $\gamma^{ID}(K_q^3)\ge  q^2-q\sqrt{q}$. We  improve this bound to $\gamma^{ID}(K_q^3)\ge   q^2-\frac{3}{2} q$. The conventional methods used for obtaining lower bounds on identifying codes do not help here. Hence, we provide a different technique building on the approach of Goddard and Wash. Moreover, we improve the above mentioned bound $\gamma^{ID}(K_q^n)\leq q^{n-1}$ to $\gamma^{ID}(K_q^n)\leq q^{n-k}$ for $n=3\frac{q^k-1}{q-1}$ when $q$ is a prime power. For this bound, we utilize suitable linear codes over finite fields and a class of closely related codes, namely, the self-locating-dominating codes. In addition, we show that the self-locating-dominating codes satisfy the result
 $\gamma^{SLD}(K_q^3)=q^2$ related to the above conjecture.
\end{abstract}

\noindent \textbf{Keywords:} Hamming graph; identifying code; linear codes over finite fields; Latin square; location-domination

\section{Introduction}

Sensor networks are systems consisting of sensors and links between them. As a monitoring tool they may be used for example in surveillance or to oversee arrays of processors. The basic idea is that a sensor is placed at some node of a network and then it monitors its surroundings reporting on possible anomalies or irregularities. Based on these reports the central unit will deduce the location of the anomaly. The goal is to minimize the number of sensors in networks with certain structures. More  on location in sensor networks can be found in~\cite{Trachtenberg,LT:disj,Ray}.

 A simple, undirected and connected graph $G=(V,E)$ is utilized to model the sensor network. The set of vertices adjacent to a vertex $v$ is called the \textit{open neighbourhood} of $v$, denoted by $N(v)$, and the set $N(v)\cup \{v\}=N[v]$ is called the \textit{closed neighbourhood} of $v$. The vertices represent the possible locations of the sensors and the edge set determines the surrounding area of a sensor. In other words, a sensor placed on vertex $u$ monitors  locations $N[u]$ for irregularities.

 A nonempty subset $C$ of a vertex set $V$ of a graph is called a \textit{code} and its elements are called \textit{codewords}. We define the \emph{ identifying set} or $I$-\emph{set} of a vertex $u$ as $$I(u)=N[u]\cap C$$ or if the code or graph is unclear we may use the notation $I(C;u)$ or $I(G,C;u)$. The $I$-set can also be defined for a \emph{set} of vertices. That is, for $U\subseteq V$, we define $$I(U)=\bigcup_{v\in U}I(v).$$ In this paper, the code can be understood as the locations of the sensors within our sensor network and the $I$-set of $u$ as the set of sensors which oversee the location $u$. The set of vertices $C$ is called a \textit{dominating set} if $I(v)\neq \emptyset$ for each vertex $v\in V$ and the minimum size of a dominating set in a graph $G$ is called the \textit{domination number} $\gamma(G)$. Hence, if sensors are placed at vertices which form a dominating set, then each location is monitored by a sensor and an irregularity is always detected. However, if the sensors report only that there is an irregularity within the area they monitor, then we need stronger condition than just a dominating set for locating the irregularity. For this purpose Karpovsky, Chakrabarty and Levitin defined \textit{identifying codes} in \cite{kcl}. More  on identifying codes can be found at \cite{lowww} and for recent development, see \cite{IDHereditary},\cite{IDvTrans} and\cite{IDMoreCompl}.
\begin{definition}
A code $C\subseteq V$ is \textit{identifying} in a graph $G$ if $C$ is a dominating set and $$I(u)\neq I(v)$$ for each pair of distinct vertices $u,v\in V$. An identifying code $C$ of minimum cardinality in a finite graph $G$ is called \textit{optimal} and its cardinality is denote with $\gamma^{ID}(G)$.
\end{definition}

The previous definition of identifying code is illustrated in the following example.
\begin{example}
Let us consider graph $G$ of Figure \ref{FigureID} and the code $C=\{a,b,c\}$. We have $I(d)=\{a\}$, $I(e)=\{b\}$, $I(f)=\{c\}$, $I(a)=\{a,b\}$, $I(b)=\{a,b,c\}$ and $I(c)=\{b,c\}$. Hence, each $I$-set is non-empty and unique and, therefore, the code $C$ is an identifying code. Moreover, there are no smaller identifying codes in $G$ since using at most two codewords we can form at most three different nonempty subsets of the code. Hence, $C$ is an optimal identifying code in $G$ and $\gamma^{ID}(G)=3$.
\end{example}

Notice that there are some possible problems with identifying codes if more than one irregularity may occur in the sensor network. For instance, in the previous example, we have $I(b) = I(\{d,e,f\})$. Hence, if there were irregularities in the vertices $d$, $e$ and $f$, then we would mistakenly deduce that an irregularity is in the vertex $b$. Moreover, we would not even notice that something went wrong. To overcome this problem, in~\cite{+koodithl}, so called self-identifying codes, which are able to locate one irregularity and detect multiple ones, have been introduced. (Notice that in the original paper self-identifying codes are called $1^+$-identifying.) The formal definition of self-identifying codes is given as follows.


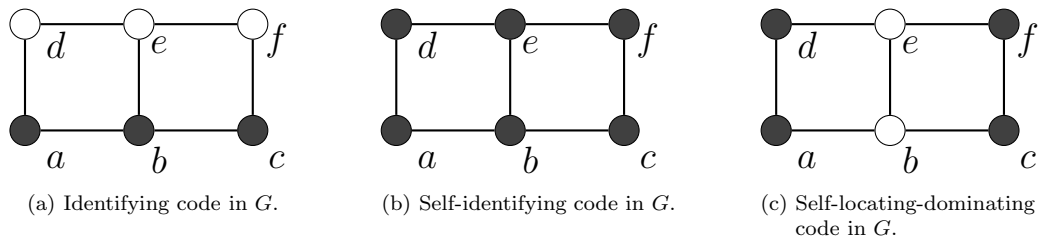
\begin{figure}[ht]
\captionsetup[subfigure]{format=hang,justification=raggedright,singlelinecheck=on}
    \centering
 \subfloat[Identifying code in $G$.]{\begin{tikzpicture}
    \node[main node, minimum size=0.4cm] (1)[fill=darkgray]  {};
    \node[main node, minimum size=0.4cm] (2) [above = 1cm of 1][fill=white]  {};
    \node[main node, minimum size=0.4cm] (3) [right = 1.1cm of 1][fill=darkgray]    {};
    \node[main node, minimum size=0.4cm] (4) [right = 1.1cm of 3][fill=darkgray]  {};
    \node[main node, minimum size=0.4cm] (5) [above = 1cm of 3][fill=white]   {};
    \node[main node, minimum size=0.4cm] (6) [above = 1cm of 4][fill=white]   {};

    \node (7) [below right = 0.01cm and 0.01cm of 1]  {\Large $a$};
    \node (8) [above = 1cm of 7]  {\Large $d$};
    \node (9) [right = 0.89cm of 7]    {\Large $b$};
    \node (10) [right = 1.10cm of 9]  {\Large $c$};
    \node (11) [above = 1cm of 9]   {\Large $e$};
    \node (12) [above = 1cm of 10]   {\Large $f$};

    \path[draw,thick]
    (1) edge node {} (2)
    (1) edge node {} (3)
    (3) edge node {} (4)
    (3) edge node {} (5)
    (5) edge node {} (6)
    (4) edge node {} (6)
    (2) edge node {} (5)

;
\end{tikzpicture}\label{FigureID}}
 \hspace{1cm}
\subfloat[Self-identifying code in $G$.]{\begin{tikzpicture}
    \node[main node, minimum size=0.4cm] (1)[fill=darkgray]  {};
    \node[main node, minimum size=0.4cm] (2) [above = 1cm of 1][fill=darkgray]  {};
    \node[main node, minimum size=0.4cm] (3) [right = 1.1cm of 1][fill=darkgray]   {};
    \node[main node, minimum size=0.4cm] (4) [right = 1.1cm of 3][fill=darkgray]  {};
    \node[main node, minimum size=0.4cm] (5) [above = 1cm of 3][fill=darkgray]   {};
    \node[main node, minimum size=0.4cm] (6) [above = 1cm of 4][fill=darkgray]   {};

    \node (7) [below right = 0.01cm and 0.01cm of 1]  {\Large $a$};
    \node (8) [above = 1cm of 7]  {\Large $d$};
    \node (9) [right = 0.89cm of 7]    {\Large $b$};
    \node (10) [right = 1.10cm of 9]  {\Large $c$};
    \node (11) [above = 1cm of 9]   {\Large $e$};
    \node (12) [above = 1cm of 10]   {\Large $f$};

    \path[draw,thick]
    (1) edge node {} (2)
    (1) edge node {} (3)
    (3) edge node {} (4)
    (3) edge node {} (5)
    (5) edge node {} (6)
    (4) edge node {} (6)
    (2) edge node {} (5)

;
\end{tikzpicture}
\label{FigureSID}}
 \hspace{1cm}
\subfloat[Self-locating-dominating code in $G$.]{\begin{tikzpicture}
    \node[main node, minimum size=0.4cm] (1)[fill=darkgray]  {};
    \node[main node, minimum size=0.4cm] (2) [above = 1cm of 1][fill=darkgray]  {};
    \node[main node, minimum size=0.4cm] (3) [right = 1.1cm of 1][fill=white]    {};
    \node[main node, minimum size=0.4cm] (4) [right = 1.1cm of 3][fill=darkgray]  {};
    \node[main node, minimum size=0.4cm] (5) [above = 1cm of 3][fill=white]   {};
    \node[main node, minimum size=0.4cm] (6) [above = 1cm of 4][fill=darkgray]   {};

    \node (7) [below right = 0.01cm and 0.01cm of 1]  {\Large $a$};
    \node (8) [above = 1cm of 7]  {\Large $d$};
    \node (9) [right = 0.89cm of 7]    {\Large $b$};
    \node (10) [right = 1.10cm of 9]  {\Large $c$};
    \node (11) [above = 1cm of 9]   {\Large $e$};
    \node (12) [above = 1cm of 10]   {\Large $f$};

    \path[draw,thick]
    (1) edge node {} (2)
    (1) edge node {} (3)
    (3) edge node {} (4)
    (3) edge node {} (5)
    (5) edge node {} (6)
    (4) edge node {} (6)
    (2) edge node {} (5)

;
\end{tikzpicture}
\label{FigureSLD}}
\caption{Optimal identifying, self-identifying and self-locating-dominating codes in $G$.} \label{FigureSIDSLDID}

\end{figure}

\begin{definition}
A code $C \subseteq V$ is called \emph{self-identifying} in $G$ if the code $C$ is identifying in $G$ and for all $u \in V$ and $U \subseteq V$ such that $|U| \geq 2$ we have
\[
I(C;u) \neq I(C;U) \text{.}
\]
A self-identifying code $C$ in a finite graph $G$ with the smallest cardinality is called \emph{optimal} and the number of codewords in an optimal self-identifying code is denoted by $\gamma^{SID}(G)$.
\end{definition}

In addition to~\cite{+koodithl}, self-identifying codes have also been previously discussed in~\cite{JLcctl,JLtldsn}. In these papers, two useful characterizations have been presented for self-identifying codes. These characterizations are presented in the following theorem.
\begin{theorem}[\cite{+koodithl,JLcctl,JLtldsn}] \label{ThmSIDChar}
Let $C$ be a code in $G$. Then the following statements are equivalent:
\begin{itemize}
\item[(i)] The code $C$ is self-identifying in $G$.
\item[(ii)] For all distinct $u,v \in V$, we have $I(C;u) \setminus I(C;v) \neq \emptyset$.
\item[(iii)] For all $u \in V$, we have $I(C;u) \neq \emptyset$ and
\[
\bigcap_{c \in I(C;u)} N[c] = \{u\} \text{.}
\]
\end{itemize}
\end{theorem}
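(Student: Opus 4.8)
The plan is to establish the three equivalences through the cycle (i) $\Rightarrow$ (ii) $\Rightarrow$ (iii) $\Rightarrow$ (i). The single engine driving every step is the elementary symmetry of closed neighbourhoods in a simple graph: for a codeword $c$ we have $c \in I(C;u)$ precisely when $c \in N[u]$, which by symmetry is precisely when $u \in N[c]$. I would record first, as a one-line observation, the consequence that $I(C;u) \subseteq I(C;v)$ holds if and only if every codeword covering $u$ has $v$ in its closed neighbourhood, i.e.\ if and only if $v \in \bigcap_{c \in I(C;u)} N[c]$. This dictionary converts the set-theoretic conditions in (ii) and (iii) into one another almost verbatim, so most of the work is packaged into this remark.

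For (i) $\Rightarrow$ (ii) I would argue contrapositively. If $I(C;u) \subseteq I(C;v)$ for some distinct $u,v$, then the two-element set $U = \{u,v\}$ satisfies $|U| \geq 2$ and $I(C;U) = I(C;u) \cup I(C;v) = I(C;v)$, contradicting the self-identifying property applied to the single vertex $v$ and the set $U$. For (ii) $\Rightarrow$ (iii), note first that (ii) forces every $I$-set to be non-empty, since an empty $I(C;u)$ would give $I(C;u) \setminus I(C;v) = \emptyset$ for any other vertex $v$; this yields domination. The inclusion $\{u\} \subseteq \bigcap_{c \in I(C;u)} N[c]$ is automatic because $u \in N[c]$ for every $c \in I(C;u)$, and for the reverse inclusion any $w \neq u$ lying in the intersection would give $I(C;u) \subseteq I(C;w)$ by the observation, contradicting (ii).

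For (iii) $\Rightarrow$ (i), domination is part of (iii), so it remains to verify $I(C;w) \neq I(C;U)$ for every $w \in V$ and every $U \subseteq V$ with $|U| \geq 2$ (which in the special case $U = \{u,v\}$ also delivers the distinctness of $I$-sets, hence that $C$ is identifying). Given such $w$ and $U$, I would choose $u \in U$ with $u \neq w$, which is possible exactly because $|U| \geq 2$. If $I(C;w) = I(C;U)$, then $I(C;u) \subseteq I(C;U) = I(C;w)$, so by the observation $w \in \bigcap_{c \in I(C;u)} N[c] = \{u\}$, forcing $w = u$ and a contradiction.

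I do not expect a genuine obstacle here: the argument is short once the neighbourhood-symmetry observation is in place, and the only care needed is bookkeeping. The two points to watch are the implicit assumption $|V| \geq 2$ used to guarantee a distinct vertex $v$ when deriving non-emptiness of the $I$-sets, and the selection of $u \in U$ distinct from $w$, which relies precisely on $|U| \geq 2$. Everything else is a direct translation through the equivalence $I(C;u) \subseteq I(C;v) \Leftrightarrow v \in \bigcap_{c \in I(C;u)} N[c]$.
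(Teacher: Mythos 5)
Your proposal cannot be checked against an in-paper argument, because the paper states Theorem~\ref{ThmSIDChar} as a known result imported from~\cite{+koodithl,JLcctl,JLtldsn} and gives no proof of it. Judged on its own, your proof is correct and complete. The dictionary $I(C;u) \subseteq I(C;v) \Leftrightarrow v \in \bigcap_{c \in I(C;u)} N[c]$, which is just closed-neighbourhood symmetry, is exactly the right engine, and all three implications go through: in (i)~$\Rightarrow$~(ii) the set $U=\{u,v\}$ with $I(C;U)=I(C;v)$ contradicts the self-identification condition applied to the vertex $v$; in (ii)~$\Rightarrow$~(iii) non-emptiness of $I$-sets and the reverse inclusion both reduce to (ii) via the dictionary; and in (iii)~$\Rightarrow$~(i) the choice of $u \in U$ with $u \neq w$ (possible since $|U|\geq 2$) together with the fact that (iii) guarantees $I(C;u)\neq\emptyset$ makes the conclusion $w \in \bigcap_{c \in I(C;u)} N[c] = \{u\}$ a genuine contradiction rather than a vacuous one. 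Your parenthetical remark that the case $U=\{u,v\}$ also yields pairwise distinctness of $I$-sets, hence the identifying property demanded by the definition, correctly closes the loop. The two caveats you flag are the right ones, and both are harmless here: the paper defines codes to be nonempty (which disposes of $|V|=1$), and the dictionary should be read with the convention that an intersection over an empty index set is all of $V$, which only ever works in your favour since (iii) rules that case out before the intersection is used.
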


The previous definition of self-identifying codes is illustrated in the following example.
\begin{example}
Let $G$ be the graph in Figure~\ref{FigureSID} and let $C$ be a self-identifying code in $G$. If we now have $|I(a)|=1$, then $\bigcap_{c \in I(a)} N[c] \neq \{a\}$ contradicting the fact that $C$ is self-identifying due to Theorem~\ref{ThmSIDChar}(iii). Furthermore, if we have $I(a)=\{b,d\}$, then $I(a)\subseteq I(e)$ which contradicts with Theorem~\ref{ThmSIDChar}(ii). Finally, if $I(a)=\{a,b\}$ or $I(a)=\{a,d\}$, then respectively $I(a)\subseteq I(b)$ or $I(a)\subseteq I(d)$ (a contradiction). Hence, we must have $I(a)=\{a,b,d\}$ if $C$ is self-identifying. Analogously, we get $I(f)=\{c,e,f\}$. Therefore, $C=V$ and, indeed, $V$ is a self-identifying code in $G$.
\end{example}

For the situations when the sensor can distinguish  whether the anomaly is in the open neighbourhood of the sensor or in the location of the sensor itself, we have \emph{locating-dominating} codes which were introduced by Slater in \cite{RS:LDnumber,S:DomLocAcyclic,S:DomandRef} (for recent developments, see \cite{LDBoundExtremal} and \cite{lowww}). More precisely, a code $C \subseteq V$ is locating-dominating in $G$ if the identifying sets $I(C;u)$ are nonempty and unique for all $u \in V \setminus C$.  Inspired by self-identifying codes, we may analogously define so called self-locating-dominating codes, which have been introduced and motivated in~\cite{JLLrntcld}.
\begin{definition}\label{SLD def}
A code $C\subseteq V$ is  \emph{self-locating-dominating} in $G$ if for each vertex $u\in V \setminus C$ we have $I(u) \neq \emptyset$ and
$$
\bigcap_{c\in I(u)} N[c]=\{u\} \text{.}
$$
A self-locating-dominating code $C$ in a finite graph $G$ with the smallest cardinality is called \emph{optimal} and the number of codewords in an optimal self-locating-dominating code is denoted by $\gamma^{SLD}(G)$.
\end{definition}
In the following theorem, we show that self-locating-dominating codes have a characterization analogous to the one of self-identifying codes. By comparing Definition~\ref{SLD def} and Theorem~\ref{SLD karak} to Theorem~\ref{ThmSIDChar}, we can see that they are almost the same except that only non-codewords are considered in the context of self-location-domination.
\begin{theorem}[\cite{JLLrntcld}]\label{SLD karak}
A code $C\subseteq V$ is self-locating-dominating in $G$ if and only if for each vertex $u \in V \setminus C$ and $v \in V$ ($u \neq v$) we have $$I(u)\setminus I(v)\neq\emptyset.$$
\end{theorem}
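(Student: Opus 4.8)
The plan is to reduce both implications to a single translation between the two languages in the statement: containment of identifying sets on one side, and membership in the intersection of closed neighbourhoods on the other. The bridge is the symmetry of closed neighbourhoods in an undirected graph, namely $c \in N[v] \iff v \in N[c]$.

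First I would record the key equivalence: for any $u \in V \setminus C$ with $I(u) \neq \emptyset$ and any $v \in V$,
$$
I(u) \subseteq I(v) \iff v \in \bigcap_{c \in I(u)} N[c] \text{.}
$$
Indeed, since $I(u) \subseteq C$, the inclusion $I(u) \subseteq I(v) = N[v] \cap C$ is equivalent to $I(u) \subseteq N[v]$, i.e.\ to $c \in N[v]$ for every $c \in I(u)$; by symmetry this reads $v \in N[c]$ for every such $c$, which is exactly $v \in \bigcap_{c \in I(u)} N[c]$. The same symmetry shows, for free, that $u$ itself always lies in $\bigcap_{c \in I(u)} N[c]$ whenever $I(u) \neq \emptyset$, because every $c \in I(u)$ satisfies $c \in N[u]$.

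For the forward direction I would assume $C$ is self-locating-dominating and fix $u \in V \setminus C$ and $v \neq u$. Then $\bigcap_{c \in I(u)} N[c] = \{u\}$, so $v$ is not in this intersection; by the key equivalence $I(u) \not\subseteq I(v)$, that is $I(u) \setminus I(v) \neq \emptyset$. For the converse, suppose $I(u) \setminus I(v) \neq \emptyset$ for all $u \in V \setminus C$ and all $v \neq u$. Choosing any such $v$ already forces $I(u) \neq \emptyset$; moreover $u$ lies in $\bigcap_{c \in I(u)} N[c]$ by the remark above, while the hypothesis together with the equivalence shows that no $v \neq u$ lies in it, whence $\bigcap_{c \in I(u)} N[c] = \{u\}$. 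Thus $C$ is self-locating-dominating.

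The argument is short and I do not anticipate a serious obstacle; the only points requiring care are the empty-set bookkeeping, namely securing $I(u) \neq \emptyset$ before invoking the intersection (with a one-vertex graph as a trivial degenerate case), and the consistent use of the undirected symmetry $c \in N[v] \iff v \in N[c]$, which is precisely what makes both translations reversible.
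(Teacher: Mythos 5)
Your proof is correct. There is, in fact, nothing in this paper to compare it against: Theorem~\ref{SLD karak} is stated with a citation to~\cite{JLLrntcld}, and no proof of it appears in this document, so your argument fills that gap rather than duplicating an in-paper one. The route you take is the natural one: the single equivalence $I(u) \subseteq I(v) \iff v \in \bigcap_{c \in I(u)} N[c]$, justified by $I(u) \subseteq C$ together with the symmetry $c \in N[v] \iff v \in N[c]$ of closed neighbourhoods, plus the observation that $u$ itself always lies in $\bigcap_{c \in I(u)} N[c]$ when $I(u) \neq \emptyset$, yields both directions of the characterization at once. Your bookkeeping is also sound: in the converse direction the existence of some $v \neq u$ forces $I(u) \neq \emptyset$, and the only exceptional situation, a one-vertex graph, is vacuous on both sides since codes are nonempty by definition, so $V \setminus C = \emptyset$ there.
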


The previous definition of self-locating-dominating codes is illustrated in the following example.
\begin{example}
Let $G$ be the graph in Figure~\ref{FigureSLD} and let $C$ be a self-locating-dominating code in $G$. Necessarily, the vertex $a$ belongs to $C$ since otherwise
$$
I(a)\setminus I(e)=\emptyset.
$$
Similar reasoning also applies to the vertices $c$, $d$ and $f$. Hence, we have $\{a,c,d,f\}\subseteq C$. On the other hand, we have $N[a]\cap N[c]= \{b\}$ and $N[d] \cap N[f] = \{e\}$. Therefore, by the definition, $C=\{a,c,d,f\}$ is an optimal self-locating-dominating code in $G$.
\end{example}

A graph is called a \emph{complete graph} on $q$ vertices, denoted by $K_q$, if each pair of vertices of the graph is adjacent. The \emph{Cartesian product} of two graphs $G_1=(V_1,E_1)$ and $G_2=(V_2,E_2)$ is defined as $G_1\square G_2=(V_1\times V_2,E)$, where $E$ is a set of edges such that $(u_1,u_2)(v_1,v_2)\in E$ if and only if $u_1=v_1$ and $u_2v_2\in E_2$, or $u_2=v_2$ and $u_1v_1\in E_1$. The Cartesian product $K_q\square K_q\square\cdots \square K_q$ of $n$ copies of $K_q$ is denoted by $K_q^n$.

 Identifying codes have been extensively studied, for example, in the binary hypercubes $K_2^n$ (see the many articles listed in \cite{lowww}), and \cite{HedIDinCarProdPathComp,RWIDinCarProd} for other Cartesian products. In 2008,  Gravier, Moncel and Semri \cite{GravierCompGraph} investigated identification in $K_q^2$.  Goddard and Wash  \cite{GWIDcpg} studied identification in the more general case of $K_q^n$ and they, in particular, gave a conjecture for the cardinality of an optimal identifying code in $K_q^3=K_q\square K_q\square K_q$.

\begin{conjecture}[\cite{GWIDcpg}]\label{Konjektuuri}
For all $q\geq 1$, $\gamma^{ID}(K_q\square K_q\square K_q)=q^2$.
\end{conjecture}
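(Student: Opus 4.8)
The plan is to \emph{refute} this conjecture rather than to confirm it. Since the known upper bound gives $\gamma^{ID}(K_q^3)\le q^2$ (the case $n=3$ of the Goddard--Wash bound $\gamma^{ID}(K_q^n)\le q^{n-1}$), the claimed equality can only fail from below, so it suffices to exhibit, for even a single infinite family of values of $q$, an identifying code of cardinality strictly less than $q^2$. I would target the family $q=4^k$ and aim for the announced bound $q^2-q/4$.

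First I would fix coordinates, writing $V(K_q^3)=[q]^3$ with two triples adjacent exactly when they agree in two of the three coordinates. Then $N[(a,b,c)]$ is the union of the three axis-parallel lines through $(a,b,c)$, and $I(u)=C\cap N[u]$ records precisely which codewords lie on those three lines. In this language the identifying requirement splits into domination ($I(u)\neq\emptyset$ for all $u$) and separation ($I(u)\neq I(v)$ for $u\neq v$). The separation condition is comparatively easy for vertices at Hamming distance two or three, since their closed neighbourhoods are disjoint (distance three) or meet in only two points (distance two); the delicate case is a pair of \emph{collinear} vertices, say $(a,b,c)$ and $(a,b,c')$, whose closed neighbourhoods share the entire line $\{(a,b,\ast)\}$. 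For such a pair only the transverse lines can distinguish the two vertices, so the whole construction must be engineered around separating collinear pairs economically.

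Next I would build $C$ from a structured base object rather than ad hoc. A natural starting point is the graph of a function $\{(x,y,f(x,y))\}$, which places exactly one codeword on every $z$-line and already costs $q^2$; the savings of $q/4$ would then have to come from replacing $f$ on a controlled block of columns by a sparser, design-based pattern that still leaves every collinear pair separated. This is where the designs and the recursion enter: for $q=4^k$ I would start from a verified configuration at the base order $q=4$ (where $q^2-q/4=15$) and lift it to order $4^{k}$ by substituting the smaller configuration into the blocks of a suitable combinatorial design, relying on Latin squares to guarantee that each transverse line meets the code in a predictable, balanced way. The point of the recursion is that the $q/4$ saving at the base compounds correctly across levels.

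The main obstacle, as in essentially every identifying-code construction, will be verifying that the economised code still separates \emph{every} pair of vertices — above all every collinear pair — once the $q/4$ codewords have been redistributed. This is not one clean inequality but a case analysis over the relative positions of $u$ and $v$, and the recursion magnifies it, since one must show the separation property is inherited from the base design at each level. I would therefore spend most of the effort isolating a single clean invariant on the base configuration (a property of the design or Latin square strong enough to force separation of collinear vertices) that is manifestly preserved by the substitution, so that the full verification collapses to checking the base case $q=4$ directly together with one inductive step.
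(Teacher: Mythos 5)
Your proposal follows essentially the same route as the paper: it refutes the conjecture by constructing, for $q=4^t$, an identifying code of cardinality $q^2-q/4$, anchored at an explicitly verified $15$-word code in $K_4^3$ and lifted by a recursive block substitution governed by Latin squares, exactly as in Theorems~\ref{C1} and~\ref{IDkonstruktio}. The ``clean invariant'' you hope to isolate is realized in the paper by the auxiliary code $C_L$ of Lemma~\ref{C_L}, which identifies $K_4^3$ with its diagonal $Di$ deleted and satisfies $I(c)=\{c\}$ for every codeword, so that full Latin-square codes go into the subcubes indexed by $C_L$ while the recursive (sparser) code occupies the four diagonal subcubes.
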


 In~\cite{GWIDcpg}, Goddard and Wash prove that $\gamma^{ID}(K_q^3) \leq q^2$ for all $q \geq 1$. Moreover, by an exhaustive computer search, they show that $\gamma^{ID}(K_q^3)=q^2$ when $q=3$. Furthermore, they provide a lower bound stating that $\gamma^{ID}(K_q^3) \geq q^2-q\sqrt{q}$. Recall that the domination number $\gamma(K_q^3)=\left\lceil\frac{q^2}{2}\right\rceil$ (see~\cite{chll}).

In this paper, we first show a one-to-one correspondence between Latin squares and optimal self-locating-dominating codes in $K_q^3$ in Section~\ref{SectionSLD}. Then based on this observation we see that the bound $q^2$ in Conjecture~\ref{Konjektuuri} holds for \emph{self-locating-dominating codes}. In Section~\ref{SectionID}, we show for identifying codes that $\gamma^{ID}(K_q^3)\leq q^2-q/4$ when $q$ is a power of four. The approach is  based on the recursive use of suitable Latin squares. This result disproves the Conjecture~\ref{Konjektuuri}. We also give constructions of identifying codes for  values of $q$ other than the powers of four.
 After that we improve the lower bound of identifying codes in $K_q^3$ from $q^2-q\sqrt{q}$ to $q^2-\frac{3}{2}q$. Finally, in Section~\ref{SectionIDncopies}, we consider identifying codes in  $K_q^n$, $n>3$. In~\cite{GWIDcpg}, it has been shown that  $\gamma^{ID}(K_q^n)\le q^{n-1}$. 
In Section~\ref{SectionIDncopies}, we significantly improve this upper bound when $q$ is a prime power using suitable linear codes over finite fields as well as self-identifying codes and self-locating-dominating codes.

\section{Self-location-domination in $K_q^3$} \label{SectionSLD}

In this section, we examine  self-locating-dominating codes in $K_q^3$. The vertices of the graph are denoted by $(x,y,z)$, where $1\leq x,y,z\leq q$, i.e., the vertex set $V=\{(x,y,z)\mid (x,y,z)\in\Z^3, 1\leq x,y,z\leq q\}$. Hence, $K^3_q$ can be viewed as a cube in $\Z^3$ consisting of coordinates $(x,y,z)$. A \emph{pipe} is defined as a set of vertices which fixes two of the three coordinates. For example, the set $\{(1,2,z)\mid 1\leq z\leq q\}$ is a pipe in $K^3_q$. A pipe that fixes $y$- and $z$-coordinates is called a \emph{row}, a pipe fixing $x$- and $z$-coordinates is called a \emph{column} and a pipe fixing $x$- and $y$-coordinates is called a \emph{tower}. Two vertices are neighbours in $K^3_q$ if and only if they belong to the same pipe. Hence, we have $|N(v)|=3q-2$.

We can represent a code in $K_q\square K_m\square K_l$ by taking a two dimensional $q\times m$ grid and placing the $z$-coordinates of the codewords to the positions with their $x$- and $y$-coordinates. The tower $(1,1,z)$ is considered to be at the top left corner. Some codes and their representations are illustrated in Example \ref{extEsim}. Moreover, if we choose a suitable code in $K_q^3$, then its representation in a $q\times q$ grid can be considered as a $q\times q$ \textit{Latin square}, which is a $q\times q$ array filled with number from $1$ to $q$ in such a way that each number occurs exactly once in each row or column.

\begin{lemma}\label{3uniikki}
Let $C$ be a code in $K^3_q$ and $v$ be a vertex of $K^3_q$.
\begin{itemize}

\item[(i)] If a vertex $v$ has two codewords in its $I$-set and they do not locate within a single pipe, then there is exactly one other vertex which has those two codewords in its $I$-set.
\item[(ii)] The $I$-set $I(v)$ is not a subset of any other $I$-set if and only if there are at least three codewords in $I(v)$ and they do not locate within a single pipe.

\end{itemize}
\begin{proof}

Let $C$ be a code in $K^3_q$. (i) Let us have $I(u)=\{c_1,c_2\}$, where $u=(x,y,z),c_1=(x_1,y_1,z_1)$ and $c_2=(x_2,y_2,z_2)$. Since $c_1$ and $c_2$ do not belong to the same pipe, we can without loss of generality assume that $x_1\neq x_2$, $y_1\neq y_2$, $x=x_1$, $y=y_2$ and $z=z_1=z_2$. Now we have $N[c_1]\cap N[c_2]=\{u,(x_2,y_1,z)\}$.

(ii) Let us first show that if we have less than three codewords in $I(u)$ or the codewords locate within a single pipe, then $I(u)$ is a subset of another $I$-set. Let us have $I(u)=\{c_1\}$ and $v\in N(c_1)$. Then $I(u)\subseteq I(v)$. If we have $|I(u)|=2$ and the codewords do not locate within the same pipe, then the case is same as in ($i$). If we have $ I(u)=\{c_1,\dots, c_n\}$ and $I(u)\subseteq P$ for  some pipe $P$, then $I(u)\subseteq P\subseteq I(c_1)$. 
Let us then assume that $\{c_1,c_2,c_3\}\in I(u)$, $c_1$, $c_2$ do not belong to the same pipe and $N[c_1]\cap N[c_2]=\{w,u\}$. Hence, $u$ and $w$ do not belong to the same pipe and by (i) we have $N[w]\cap N[u]=\{c_1,c_2\}$. Therefore, $c_3\not\in I(w)$ and $I(u)$ is not a subset of any other $I$-set.\end{proof}
\end{lemma}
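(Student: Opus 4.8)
The plan is to reduce both parts to a single geometric computation about closed neighbourhoods. Since $N[v]$ consists of $v$ together with every vertex sharing at least two coordinates with $v$, a pair of codewords $\{c_1,c_2\}$ lies in a common pipe exactly when $c_1$ and $c_2$ agree in two coordinates, so ``not within a single pipe'' means they agree in at most one. First I would record the two relevant cases. If $c_1$ and $c_2$ agree in \emph{no} coordinate, then $N[c_1]\cap N[c_2]=\emptyset$: a common closed neighbour would have to match $c_1$ in two coordinates and $c_2$ in two coordinates, but along each of the three axes these matches are incompatible, so at most three matches are available and four are needed. If instead they agree in \emph{exactly one} coordinate, say the $z$-coordinate equals $z_0$ while $x_1\neq x_2$ and $y_1\neq y_2$, a short axis-by-axis check shows $N[c_1]\cap N[c_2]$ consists of precisely the two ``corner completions'' $(x_1,y_2,z_0)$ and $(x_2,y_1,z_0)$ lying in the plane $z=z_0$.

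For (i) I would argue that $v$ itself witnesses $c_1,c_2\in I(v)$, so $v\in N[c_1]\cap N[c_2]$; hence this intersection is nonempty, which by the first case above forces $c_1,c_2$ to agree in exactly one coordinate. The second computation then gives $|N[c_1]\cap N[c_2]|=2$, and since the vertices whose $I$-set contains both $c_1$ and $c_2$ are exactly the elements of $N[c_1]\cap N[c_2]$, there is precisely one such vertex besides $v$.

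For (ii) I would prove one direction by contrapositive and the other directly. If $|I(v)|\leq 1$, then a neighbour of the single codeword (or any other vertex when $I(v)=\emptyset$) yields a containing $I$-set; if $|I(v)|=2$ with the codewords in no common pipe, part (i) already exhibits a second vertex containing both; and if $I(v)$ lies in a single pipe $P$, then any one vertex of $P$ closed-dominates all of $P$, so $I(v)\subseteq I(c)$ for a codeword $c\in I(v)\cap P$ chosen distinct from $v$. In every case $I(v)$ is a subset of another $I$-set. Conversely, assuming $|I(v)|\geq 3$ and $I(v)$ not contained in any pipe, I would pick $c_1,c_2\in I(v)$ sharing no pipe together with a third codeword $c_3$, and suppose for contradiction that $I(v)\subseteq I(w)$ for some $w\neq v$. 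From $c_1,c_2\in I(w)$ and part (i) we get $w\in N[c_1]\cap N[c_2]=\{v,w_0\}$, so $w=w_0$.

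The step I expect to be the real crux, and the observation that makes the argument short, is that $v$ and $w_0$ themselves agree in exactly one coordinate (the common $z_0$), so part (i) applies a \emph{second} time to the pair $\{v,w_0\}$ and returns $N[v]\cap N[w_0]=\{c_1,c_2\}$. Because $c_3\in I(v)\subseteq N[v]$ yet $c_3\notin\{c_1,c_2\}$, it follows that $c_3\notin N[w_0]=N[w]$, contradicting $c_3\in I(v)\subseteq I(w)$; hence no such $w$ exists. In other words, the map sending a coordinate-plane pair sharing one coordinate to its pair of common closed neighbours is an involution on such pairs, which is exactly what lets part (i) be reused verbatim inside part (ii).
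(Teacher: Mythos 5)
Your proposal is correct and takes essentially the same route as the paper: part (i) comes down to showing that two codewords agreeing in exactly one coordinate have exactly two common closed neighbours, and part (ii) reuses this computation symmetrically --- your ``involution'' step $N[v]\cap N[w_0]=\{c_1,c_2\}$ is precisely the paper's appeal to (i) for the pair $u,w$ in order to exclude $c_3$. The only differences are cosmetic (you treat the empty $I$-set and the zero-agreement case $N[c_1]\cap N[c_2]=\emptyset$ explicitly, which the paper leaves implicit), so this is the same proof.
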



In the following two theorems we show that the bound $q^2$ in Conjecture~\ref{Konjektuuri} is true for \emph{self-locating-dominating codes}.
\begin{theorem}\label{SLD alaraja}
We have $$\gamma^{SLD}(K_q^3)\geq q^2.$$
\begin{proof}
Let $C$ be a self-locating-dominating code in $K_q^3$. By Lemma \ref{3uniikki}, each non-codeword has to have at least three codewords in its $I$-set and each codeword has at least one codeword in its $I$-set. Hence, by double counting pairs $(c,x)$ where $c\in C$ and $x\in N[c]$, we get
\[
(3q-2)|C| \geq 3(q^3-|C|)+|C| \iff |C| \geq q^2 \text{.}
\]
\end{proof}
\end{theorem}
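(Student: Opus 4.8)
The plan is to combine the structural characterization of self-locating-dominating codes with a double-counting argument on the incidence pairs between codewords and their closed neighbourhoods. The self-location-domination hypothesis is a statement about $I$-sets of non-codewords, and Lemma~\ref{3uniikki}(ii) converts ``not contained in another $I$-set'' into a clean cardinality lower bound; once that bound is secured, a standard count of codeword--vertex incidences delivers $|C| \geq q^2$.

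First I would use Theorem~\ref{SLD karak} to reformulate the hypothesis: since $C$ is self-locating-dominating, every non-codeword $u \in V \setminus C$ satisfies $I(u) \setminus I(v) \neq \emptyset$ for all $v \neq u$, which says precisely that $I(u)$ is not contained in any other $I$-set. Feeding this into Lemma~\ref{3uniikki}(ii) forces $|I(u)| \geq 3$ for every non-codeword $u$ (and moreover that these codewords do not lie within a single pipe, though only the cardinality will be used). For the codewords I would simply note that $c \in N[c]$ gives $c \in I(c)$, so $|I(c)| \geq 1$ for every $c \in C$.

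Next I would set up the double count of the pairs $(c,x)$ with $c \in C$ and $x \in N[c]$. Counting by codewords gives $(3q-2)|C|$, using that the closed neighbourhood of any vertex of $K_q^3$ is the union of its three pipes and hence has size $3q-2$. Counting by the vertex $x$ gives $\sum_{x \in V} |I(x)|$, since $x \in N[c]$ is equivalent to $c \in I(x)$. Splitting this sum according to whether $x$ is a codeword and applying the two bounds from the previous step yields
\[
(3q-2)|C| \;=\; \sum_{x \in V} |I(x)| \;\geq\; 3(q^3 - |C|) + |C|,
\]
and rearranging this linear inequality isolates $3q|C| \geq 3q^3$, i.e.\ $|C| \geq q^2$, which is the claim.

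The only genuinely nontrivial step is the lower bound $|I(u)| \geq 3$ for non-codewords; everything downstream is routine algebra. That step rests entirely on Lemma~\ref{3uniikki}(ii), so the main point to verify carefully is that the self-locating-dominating hypothesis, via Theorem~\ref{SLD karak}, really does exclude the degenerate configurations that the lemma rules out (a non-codeword seeing at most two codewords, or three codewords confined to one pipe). Once that translation is in place, the counting is immediate.
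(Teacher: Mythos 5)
Your proposal is correct and follows essentially the same route as the paper: Lemma~\ref{3uniikki}(ii) gives $|I(u)|\geq 3$ for non-codewords, $|I(c)|\geq 1$ for codewords, and the double count of pairs $(c,x)$ with $c\in C$, $x\in N[c]$ yields $(3q-2)|C|\geq 3(q^3-|C|)+|C|$, hence $|C|\geq q^2$. The only difference is that you explicitly route the hypothesis through Theorem~\ref{SLD karak} before invoking the lemma, a step the paper leaves implicit.
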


In the following theorem, we show with the aid of Lemma~\ref{3uniikki} that each optimal self-locating-dominating code in $K^3_q$ can be represented as a Latin square (and vice versa).
\begin{theorem}
There is a one-to-one correspondence between optimal self-locating-dominating codes in $K^3_q$ and $q\times q$ Latin squares.
\begin{proof}
Let $L$ be a $q\times q$ Latin square. Consider the Latin square $L$ as a code $C$ in $K_q^3$ as in if there is value $z$ in array slot $(x,y)$, then $(x,y,z)\in C$. It is immediate that $|C| = q^2$ as $L$ is a Latin square. Observe that now each non-codeword is covered by exactly three codewords not belonging to a same pipe since there is exactly one codeword in each tower, column and row intersecting with the non-codeword. Hence, by Lemma~\ref{3uniikki}(ii), $C$ is a self-locating-dominating code and we have $\gamma^{SLD}(K_q ^3)=q^2$ due to Theorem \ref{SLD alaraja}.

Let $C$ be an optimal self-locating-dominating code of cardinality $q^2$ in $K^3_q$. By Lemma~\ref{3uniikki}, each non-codeword is covered by at least three codewords and each codeword is covered by at least one codeword. Hence, by double counting pairs $(x,c)$ where $c\in C$ and $c\in N[x]$ we get the inequality $$3q^3-2q^2=(3q-2)|C|\geq |C|+3|V(K^3_q)\setminus C|=3q^3-2q^2$$ and since both sides are equal, each codeword is covered by exactly one codeword (itself) and each non-codeword is covered by exactly three codewords. If there is a tower without codewords, then some tower has two codewords and there is a codeword which is covered by two codewords (a contradiction). So, there is exactly one codeword in each tower. Similarly, we may also show that there is exactly one codeword in each row and column. If we now represent this code using a $q\times q$ grid, we get a Latin square, since there is a number from $1$ to $q$ in each box of the Latin square and the same number never occurs twice in the same row or column.\end{proof}
\end{theorem}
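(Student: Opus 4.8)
The plan is to establish the bijection by exhibiting two mutually inverse constructions: one sending a Latin square to a self-locating-dominating (SLD) code of size $q^2$, and one sending an optimal SLD code to a Latin square. For the first direction, I would interpret a $q\times q$ Latin square $L$ as a code $C\subseteq K_q^3$ via the natural rule that $(x,y,z)\in C$ precisely when the entry in cell $(x,y)$ equals $z$. The defining property of a Latin square---each symbol appears exactly once per row and once per column---translates into the geometric statement that every tower, every row, and every column of $K_q^3$ contains exactly one codeword. From here I would argue that any non-codeword $u=(x,y,z)$ is covered by exactly three codewords, namely the unique codewords lying in the tower through $u$, the row through $u$, and the column through $u$; since these three pipes pairwise share only $u$, the three codewords cannot all lie in a single pipe. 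Theorem~\ref{SLD alaraja} gives $\gamma^{SLD}(K_q^3)\ge q^2$, and Lemma~\ref{3uniikki}(ii) certifies that $C$ is SLD, so $C$ is optimal of cardinality exactly $q^2$.

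For the converse, I would start from an arbitrary optimal SLD code $C$ with $|C|=q^2$ and show its grid representation is forced to be a Latin square. The key is a tightness argument via double counting of incidences $(x,c)$ with $c\in C$ and $c\in N[x]$. Lemma~\ref{3uniikki} guarantees each non-codeword is covered by at least three codewords and each codeword by at least one (itself), while each codeword covers exactly $3q-2$ vertices. Plugging $|C|=q^2$ into both sides shows the inequality is an equality, which forces every non-codeword to be covered by \emph{exactly} three codewords and every codeword to be covered by exactly one (itself). The final step is a pigeonhole-style argument: since $C$ has $q^2$ codewords distributed among $q^2$ towers, if some tower were empty then some tower would carry at least two codewords, and one of those codewords would then be covered by another codeword in its own tower, contradicting the exact-coverage conclusion. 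Hence each tower holds exactly one codeword; the identical argument applied to rows and to columns yields the Latin-square property.

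The main obstacle I anticipate is making the tightness-and-forcing step airtight rather than merely suggestive. In particular, one must be careful that the double-counting equality pins down coverage multiplicities exactly, and that the ``two codewords in one pipe'' scenario genuinely violates the multiplicity count---this relies on the observation that two codewords in a common pipe cover each other, which is where the exact bound of one self-covering codeword per codeword bites. The row-and-column repetition of the tower argument is routine once the tower case is written cleanly, so I would present the tower case in full and then indicate that rows and columns follow symmetrically. Finally, I would note that the two constructions are visibly inverse to each other---reading a code off a Latin square and reading a Latin square off a code are the same grid encoding---so the correspondence is genuinely one-to-one, not merely a matching of cardinalities.
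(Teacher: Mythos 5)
Your proposal is correct and follows essentially the same route as the paper: the grid encoding of a Latin square certified as SLD via Lemma~\ref{3uniikki}(ii) and Theorem~\ref{SLD alaraja}, and the converse via the double-counting tightness argument forcing exact coverage multiplicities, followed by the pigeonhole argument on towers, rows and columns. The only addition beyond the paper's proof is your explicit closing remark that the two constructions are mutually inverse, which is a harmless (and arguably clarifying) observation.
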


\begin{corollary}
We have $$\gamma^{SLD}(K^3_q)=q^2.$$
\end{corollary}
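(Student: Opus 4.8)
The plan is to derive the corollary immediately as a consequence of the two preceding results, since the statement $\gamma^{SLD}(K_q^3)=q^2$ is just the combination of the lower bound from Theorem~\ref{SLD alaraja} and a matching upper bound supplied by an explicit construction. The lower bound $\gamma^{SLD}(K_q^3)\geq q^2$ is already fully established in Theorem~\ref{SLD alaraja}, so the only thing that remains is to exhibit a self-locating-dominating code of cardinality exactly $q^2$, which would force equality.

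For the upper bound, I would invoke the one-to-one correspondence established in the theorem immediately before the corollary. That theorem shows that every $q\times q$ Latin square $L$ yields, via the encoding ``place $(x,y,z)\in C$ whenever $L$ has value $z$ in slot $(x,y)$,'' a code $C$ in $K_q^3$ with $|C|=q^2$ which is self-locating-dominating by Lemma~\ref{3uniikki}(ii). Thus it suffices to note that a $q\times q$ Latin square exists for every positive integer $q$ — for instance the cyclic (Cayley table) construction where slot $(x,y)$ carries the entry $x+y \pmod q$ always works. Existence of a single Latin square gives a single self-locating-dominating code of size $q^2$, hence $\gamma^{SLD}(K_q^3)\leq q^2$.

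Combining the two inequalities yields $\gamma^{SLD}(K_q^3)=q^2$, which is exactly the claim. I do not anticipate any genuine obstacle here: the corollary is a bookkeeping step that packages Theorem~\ref{SLD alaraja} together with the constructive direction of the correspondence theorem. The only point that deserves a sentence of care is that the correspondence theorem, as stated, concerns \emph{optimal} codes of cardinality $q^2$, so one must make explicit that Latin squares (which exist for all $q$) furnish codes attaining this size; the self-locating-domination of those codes is guaranteed by Lemma~\ref{3uniikki}(ii) exactly as in the forward direction of that theorem's proof.
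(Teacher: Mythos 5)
Your proposal is correct and matches the paper's own route: the lower bound is Theorem~\ref{SLD alaraja}, and the upper bound comes from the forward direction of the correspondence theorem, where an arbitrary $q\times q$ Latin square (which exists for every $q$) is read as a code of size $q^2$ that is self-locating-dominating by Lemma~\ref{3uniikki}(ii). Your explicit remark that Latin squares exist for all $q$ (via the cyclic construction) makes precise a point the paper leaves implicit, but the argument is the same.
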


\section{Constructions for identification in $K_q^3$} \label{SectionID}

In this section, we consider identification in $K^3_q$ and present a bound $\gamma^{ID}(K_q^3)\leq q^2-q/4$ when $q$ is a power of four, giving an infinite family of counterexamples to Conjecture~\ref{Konjektuuri}. First we give a construction for an identifying code in $K_4^3$ with cardinality $15$ and then we use that identifying code and suitable Latin squares to recursively construct the infinite family.



Goddard and Wash~\cite{GWIDcpg} have shown the following results.
\begin{theorem}[\cite{GWIDcpg}]
For $m>2l$ and  $l>2q$, we have
$$\gamma^{ID}(K_q\square K_l\square K_m)= q(m-1) \text{.}$$
Moreover, for the complete graphs of equal order, we have
$$q^2-q\sqrt{q}\leq \gamma^{ID}(K_q^3)\leq q^2.$$
\end{theorem}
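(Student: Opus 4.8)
The plan is to treat the two assertions separately, in each case pairing an explicit construction (for the upper bound) with a counting argument that exploits the pipe structure of $K_q\square K_l\square K_m$ (for the lower bound). Throughout I will use that $N[v]$ is the union of the three pipes through $v$, so that for two vertices lying in a common pipe their $I$-sets agree on every codeword of that pipe; hence any distinction between them is forced to occur in the two \emph{transverse} pipes. This ``the common part cancels'' observation, together with Lemma~\ref{3uniikki}, is the engine for all four bounds.

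Consider first the equal-order bounds for $K_q^3$. For the upper bound $\gamma^{ID}(K_q^3)\le q^2$ I would take the code $C$ coming from a $q\times q$ Latin square, i.e.\ one codeword in each row, column and tower. Since no two codewords share a pipe, every codeword $c$ satisfies $I(c)=\{c\}$, while every non-codeword $u$ has $|I(u)|=3$ with the three codewords pairwise not in a common pipe. By Lemma~\ref{3uniikki}(ii) such an $I(u)$ is not contained in any other $I$-set, so all non-codeword $I$-sets are pairwise distinct and distinct from the singleton codeword sets; domination is immediate. Hence $C$ is identifying and $\gamma^{ID}(K_q^3)\le q^2$. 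For the lower bound $\gamma^{ID}(K_q^3)\ge q^2-q\sqrt{q}$ I would write $|C|=q^2-D$ and bound the ``deficiency'' $D$. Classifying towers (and, by symmetry, rows and columns) by their number of codewords, the cancellation observation shows that an empty tower forces at least $q-1$ of its $q$ heights to carry a codeword in the transverse row or column; a charging argument distributing these requirements over the codewords then limits how many pipes can be empty, and the $\sqrt{q}$ should emerge from balancing the number of empty pipes against the number of distinct transverse signatures a fixed codeword budget can realise.

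For the exact value $\gamma^{ID}(K_q\square K_l\square K_m)=q(m-1)$ with $m>2l>4q$, the towers (the long pipes, of length $m$) are decisive. For the upper bound I would place, for each $x\in\{1,\dots,q\}$ and each height $z\in\{2,\dots,m\}$, a single codeword $(x,y(x,z),z)$, choosing the $y$-coordinates so that at each height the $q$ codewords spread across distinct columns; this uses exactly $q(m-1)$ codewords, and one verifies directly that the transverse signatures separate all vertices and that domination holds. For the lower bound, the cancellation observation shows that inside a single tower at most one height can have an empty transverse signature, so each of the $ql$ towers contributes at least $m-1$ heights carrying a transverse codeword. A first double count (each codeword lies in the transverse cross of at most $l+q-1$ towers) already yields $|C|\ge ql(m-1)/(l+q-1)$, which tends to $q(m-1)$ as $l$ grows but is not tight when $l$ is close to $2q$.

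Closing this gap to the exact value $q(m-1)$ is the main obstacle. The crude count wastes the stronger fact that the $m-1$ transverse signatures in a tower must be pairwise \emph{distinct}, not merely nonempty, and it does not separate the role of row-codewords (shared by all $q$ towers of a given $y$) from that of column-codewords (shared by all $l$ towers of a given $x$). The refinement I would pursue is to use $m>2l$ to argue that the short transverse pipes cannot by themselves realise $m-1$ distinct signatures in a tower without the relevant codewords being essentially charged to that tower, thereby forcing close to one new codeword per resolved height; the hypotheses $m>2l$ and $l>2q$ are precisely what should make this charging lossless. The same distinct-signature refinement, applied to the empty pipes in the cube, is what I expect to sharpen the deficiency estimate to $q\sqrt{q}$ and thus establish $\gamma^{ID}(K_q^3)\ge q^2-q\sqrt{q}$.
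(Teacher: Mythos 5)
Note first that the paper you are being compared against does not prove this statement at all: it is quoted as background from Goddard and Wash~\cite{GWIDcpg}, so your attempt must stand entirely on its own. Of the four assertions in the theorem, your proposal actually establishes only one: the upper bound $\gamma^{ID}(K_q^3)\le q^2$ via a Latin-square code is correct, and it is essentially the paper's Lemma~\ref{n2 koodi} (every codeword $c$ has $I(c)=\{c\}$, every non-codeword is covered by three codewords pairwise not in a common pipe, and Lemma~\ref{3uniikki}(ii) finishes the argument).

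The remaining three assertions have genuine gaps. (a) Your construction for $\gamma^{ID}(K_q\square K_l\square K_m)\le q(m-1)$ fails as described. With no codewords at height $1$ and the $y$-coordinates chosen so that codewords at a common height lie in pairwise distinct pipes, any codeword $c=(x,y(x,z),z)$ has no other codeword in its row or column at height $z$, so $I(c)$ is exactly the set of codewords in the tower through $c$; but the bottom vertex $(x,y(x,z),1)$ of that same tower also has $I((x,y(x,z),1))$ equal to exactly that set, since its row and column at height $1$ are codeword-free. These two vertices are never separated, so the code is not identifying: ``one verifies directly'' conceals a real failure, not a routine check. (b) Your lower bound for the same quantity stops at $|C|\ge ql(m-1)/(l+q-1)$, which is strictly smaller than $q(m-1)$ for every admissible $l$; the charging refinement that is supposed to exploit $m>2l$ and $l>2q$ is only announced, never carried out, and you say yourself it is ``the main obstacle.'' (c) Likewise, for $\gamma^{ID}(K_q^3)\ge q^2-q\sqrt{q}$, the statement that ``the $\sqrt{q}$ should emerge from balancing'' is a hope, not an argument: no inequality bounding the deficiency $D$ by $q\sqrt{q}$ is ever derived. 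Since the exact value $q(m-1)$ and the lower bound $q^2-q\sqrt{q}$ are precisely the nontrivial content of the theorem, the proposal as written does not prove the statement.
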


Due to the recursive nature of our construction  we first define an operation which combines two codes in $K^3_q$ and $K^3_m$ into a code in $K^3_{qm}$.

\begin{definition}
Let $C_1\subseteq \{(x,y,z)\mid 1\leq x,y,z\leq q\}$ and $C_2\subseteq \{(x,y,z)\mid 1\leq x,y,z\leq m\}$ be codes in $K^3_q$ and $K^3_m$ respectively. Define $\Ext(C_1,C_2)=\{(x,y,z,a,b,c)\mid(x,y,z)\in C_1, (a,b,c)\in C_2\}$.

\end{definition}

The sextuple produced by $\Ext(C_1,C_2)$ (and also other sextuples) can be interpreted in the following way.

\begin{observation}
We can interpret each vertex $(v,u,w)\in K_{qm}^3 = \{(x',y',z')\mid 1\leq x',y',z'\leq qm\}$ as $(x,y,z,a,b,c)$ where $1\leq x,y,z\leq q$ and $1\leq a,b,c\leq m$ by having $v=x+q(a-1)$, $u=y+q(b-1)$ and $w=z+q(c-1)$. Furthermore, since each pipe fixes two out of three coordinates in a triple, a pipe fixes four out of six coordinates in a sextuple.
\end{observation}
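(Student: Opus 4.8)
The plan is to prove the two assertions of the observation separately: first that the displayed assignment is a genuine bijection between the vertex set of $K_{qm}^3$ and the set of sextuples, and second that pipes translate into the fixing of four of the six sextuple coordinates.

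For the bijection I would reduce everything to a single coordinate, since the map acts independently on $v$, $u$ and $w$. Thus it suffices to show that the rule $v = x + q(a-1)$ defines a bijection from $\{1,\dots,qm\}$ onto $\{1,\dots,q\}\times\{1,\dots,m\}$. This is exactly the uniqueness of Euclidean division: given $v$ with $1\le v\le qm$, write $v-1 = r + qs$ with $0\le r\le q-1$ and $0\le s\le m-1$ (the remainder and quotient of $v-1$ upon division by $q$), and set $x=r+1$, $a=s+1$; this recovers the unique preimage and lands in the correct ranges $1\le x\le q$, $1\le a\le m$. Since both sets have the same cardinality $qm$, the uniqueness of this representation already forces the map to be a bijection. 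Taking the product of three such coordinate bijections yields the asserted bijection on the whole vertex set, consistent with the cardinality check $(qm)^3 = q^3 m^3$.

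For the pipe statement I would exploit that the map above is coordinatewise: the value of $v$ determines and is determined by the pair $(x,a)$, and likewise $u\leftrightarrow(y,b)$ and $w\leftrightarrow(z,c)$. A pipe in $K_{qm}^3$ fixes exactly two of the three coordinates $v,u,w$ (a row fixes $u$ and $w$, a column fixes $v$ and $w$, a tower fixes $v$ and $u$). Fixing one of the large coordinates pins down precisely its two associated small coordinates, so fixing two large coordinates pins down exactly four small coordinates, while the remaining large coordinate -- and hence its two small coordinates -- ranges freely. This is the claimed "four out of six."

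There is no genuine obstacle here; this is a mixed-radix (base-$q$) bookkeeping statement. The only point that warrants care is the off-by-one shift coming from the $1$-indexing of the coordinates -- the $(a-1)$ in the definition and the $r=x-1$, $s=a-1$ correspondence -- together with the explicit appeal to uniqueness in the division algorithm, which is what guarantees the assignment is well defined and injective rather than merely surjective.
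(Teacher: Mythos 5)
Your proof is correct. The paper states this as an \emph{Observation} with no proof at all---the base-$q$ (mixed-radix) decomposition is treated as self-evident---and your argument via uniqueness of Euclidean division on each coordinate, together with the coordinatewise correspondence $v\leftrightarrow(x,a)$, $u\leftrightarrow(y,b)$, $w\leftrightarrow(z,c)$ for the pipe claim, is exactly the implicit reasoning, carefully and correctly spelled out.
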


\begin{example}\label{extEsim}
Let $C_1=\{(1,1,1),(2,2,2)\}$ and $C_2=\{(1,3,1),(2,2,2),(3,1,3)\}$ be codes in $K^3_2$ and $K^3_3$, respectively (see Figures \ref{K2esim} and \ref{K3esim}). Recall that the vertex $(1,1,z)$ is represented by the top left box and the $z$ coordinate corresponds to the number in that box. Then we have $\Ext(C_1,C_2)=\{(1,1,1,1,3,1),$ $(1,1,1,2,2,2),$ $(1,1,1,3,1,3),$ $(2,2,2,1,3,1),$ $(2,2,2,2,2,2),$ $(2,2,2,3,1,3) \}$. Furthermore, we can consider this as a code in $K^3_6$ as seen in Figure \ref{K6esim}.
\end{example}

\begin{figure}[H]
  \centering
  \begin{minipage}[b]{0.30\textwidth}
  \captionsetup{justification=centering}
    \begin{tikzpicture}
    \matrix[square matrix]
    {
    $1 $ & $ $  \\
    $ $ & $2 $  \\
    };
\end{tikzpicture}
\centering
\caption{Code $C_1$ in \newline  $K_2^3$.}\label{K2esim}   \end{minipage}
  \hfill
  \begin{minipage}[b]{0.30\textwidth}
  \captionsetup{justification=centering}
\begin{tikzpicture}
    \matrix[square matrix]
    {
    $ $ & $ $ & $3 $  \\
    $ $ & $2 $ & $ $  \\
    $1 $ & $ $ & $ $  \\
    };
\end{tikzpicture}
\centering

\caption{ Code $C_2$ in\newline  $K_3^3$.}\label{K3esim}
  \end{minipage}
  \hfill
  \begin{minipage}[b]{0.30\textwidth}
  \captionsetup{justification=centering}
\begin{tikzpicture}
    \matrix[square matrix]
    {
    $ $ & $ $ & $ $ & $ $ & $5 $ & $ $ \\
    $ $ & $ $ & $ $ & $ $ & $ $ & $6 $ \\
    $ $ & $ $ & $3 $ & $ $ & $ $ & $ $ \\
    $ $ & $ $ & $ $ & $4 $ & $ $ & $ $ \\
    $1 $ & $ $ & $ $ & $ $ & $ $ & $ $ \\
    $ $ & $2 $ & $ $ & $ $ & $ $ & $ $ \\
    };
\end{tikzpicture}
\centering

\caption{Code $\Ext(C_1,C_2)$\newline in $K_6^3$.}\label{K6esim}
  \end{minipage}

\end{figure}

Goddard and Wash \cite{GWIDcpg} give the following construction for identifying codes of cardinality $q^2$ in the graph $K^3_q$.
\begin{lemma}\label{n2 koodi}
The code $C_{q}=\{(a,b,c)\mid a+b+c\equiv 0 \pmod q\}$ is identifying in $K^3_q$ with the additional property that each pipe in $K^3_q$ has exactly one codeword.
\begin{proof}
 The code $C_{q}$ is shown to be identifying in \cite{GWIDcpg}. Furthermore, there is exactly one codeword in each pipe since if we fix two of the three coordinates, then for exactly one value of the third coordinate the equation $a+b+c\equiv 0 \pmod q$ is satisfied.
\end{proof}
\end{lemma}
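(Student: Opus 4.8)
The plan is to derive both assertions directly from the pipe structure of $C_q$, obtaining the identifying property from Lemma~\ref{3uniikki} rather than appealing to an external result. The whole argument rests on the pipe count, so I would prove that first.

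First I would settle the ``exactly one codeword per pipe'' claim. A pipe fixes two of the three coordinates; say a tower fixes $a$ and $b$. Then $(a,b,c)\in C_q$ forces $c\equiv-(a+b)\pmod q$, and this congruence has exactly one solution among the representatives $1,\dots,q$. The same argument applies verbatim to rows and columns, so every pipe meets $C_q$ in precisely one vertex.

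Next I would read off the $I$-sets using this property. For a codeword $c$, each of the three pipes through $c$ has $c$ as its unique codeword, so $I(c)=\{c\}$. For a non-codeword $v=(x,y,z)$, where $x+y+z\not\equiv0\pmod q$, the unique codewords in its row, column and tower are $(-(y+z),y,z)$, $(x,-(x+z),z)$ and $(x,y,-(x+y))$; since $x+y+z\not\equiv0$, any two of these agree in exactly one coordinate, so they are pairwise distinct and no two lie in a common pipe. Hence $I(v)$ consists of exactly three codewords not contained in a single pipe. This coordinate bookkeeping is the one place requiring care, and it is the step I expect to be the main (if mild) obstacle; fortunately every inequality to be checked collapses to the single hypothesis $x+y+z\not\equiv0$.

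Finally I would conclude identification. Every $I$-set is nonempty, so $C_q$ dominates. A codeword has a singleton $I$-set while a non-codeword has an $I$-set of size three, so the two types are never confused, and distinct codewords yield distinct singletons. For two distinct non-codewords $u$ and $v$, Lemma~\ref{3uniikki}(ii) gives that $I(u)$, having at least three codewords not within a single pipe, is not a subset of any other $I$-set; in particular $I(u)\neq I(v)$. Thus all $I$-sets are distinct and $C_q$ is identifying.
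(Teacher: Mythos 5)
Your proof is correct, but it takes a genuinely different route from the paper's. The paper proves only the one-codeword-per-pipe property (by the same counting argument you give) and outsources the identifying property entirely to the citation \cite{GWIDcpg}; you instead re-derive identification internally. From the pipe property you compute the full $I$-set structure: $I(c)=\{c\}$ for every codeword, and for a non-codeword $v=(x,y,z)$ exactly three covering codewords which pairwise agree in exactly one coordinate, each disagreement reducing to the single hypothesis $x+y+z\not\equiv 0 \pmod q$. You then split distinguishability into codeword/codeword (distinct singletons), codeword/non-codeword (sizes $1$ versus $3$), and non-codeword/non-codeword, the last handled by Lemma~\ref{3uniikki}(ii). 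One point worth noting: Lemma~\ref{3uniikki}(ii) requires only that the (at least three) codewords of $I(v)$ do not all lie in a single pipe, and your pairwise condition is stronger, so the application is valid (indeed, in $K_q^3$ three distinct vertices that pairwise share a pipe necessarily lie in one pipe, so the two conditions are closely related). What your approach buys is self-containment plus a sharper conclusion: the exact $I$-set structure you establish is precisely what the paper relies on later, e.g.\ in the proof of Theorem~\ref{IDkonstruktio}, where the fact that every pipe through a subcube carrying $C_{q/4}$ meets exactly one codeword is the key ingredient; it is also essentially the alternative argument the paper gestures at in the remark following the lemma, where $C_q$ presented in a grid is viewed as a Latin square. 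What the paper's version buys is brevity, at the cost of depending on an external result.
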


By presenting the previous construction in a grid, we can consider it as a Latin square and hence, we can get the properties mentioned in the previous lemma also that way. The identifying code in $K^3_4$ of the following theorem is of cardinality $15$. The code is presented in Figure~\ref{ID kuutiossa4}.
\begin{theorem}\label{C1}
The code $C^1=\{(2,1,3),$ $(2,1,4),$ $(3,1,1),$ $(4,1,2),$ $(1,2,2),$ $(1,2,4),$ $(2,2,4),$ $(3,2,2),$ $(1,3,1),$ $(2,3,2),$ $(3,3,3),$ $(4,3,3),$ $(2,4,4),$ $(4,4,1),$ $(4,4,3)\}$ is identifying in $K_4^3$.
\begin{proof}
By examining Table~\ref{C^1 table} in the appendix, we notice that each $I$-set is nonempty and unique.
\end{proof}
\end{theorem}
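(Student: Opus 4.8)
The statement asserts that a fully explicit $15$-element code is identifying in $K_4^3$, a graph on $64$ vertices, so the task here is verification rather than construction. Recall that in $K_q^3$ two vertices are adjacent exactly when they share a pipe, so for any vertex $v=(x,y,z)$ the set $I(v)$ consists of $v$ itself (if $v\in C^1$) together with every codeword lying in the row, column, or tower through $v$. The plan is to check the two requirements of an identifying code --- that $C^1$ is dominating and that the sets $I(v)$ are pairwise distinct --- by exploiting this pipe structure together with Lemma~\ref{3uniikki}, which lets me reduce the distinctness check to a small, explicitly describable collection of vertices.

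First I would confirm domination. Representing $C^1$ in the $4\times 4$ grid of Figure~\ref{ID kuutiossa4}, a vertex $v=(x,y,z)$ whose tower (the grid cell $(x,y)$) is nonempty is automatically dominated, so only the vertices sitting in the four empty cells need attention; for each such vertex one checks that its row or its column still meets $C^1$. Since the empty cells are few, this is a short finite inspection and shows $I(v)\neq\emptyset$ for every $v$.

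For distinctness I would split the vertices according to the shape of $I(v)$. By Lemma~\ref{3uniikki}(ii), if $I(v)$ holds at least three codewords not all lying in one pipe, then $I(v)$ is not a subset of any other $I$-set; hence $I(v)=I(w)$ with $w\neq v$ cannot occur, since it would force $I(v)\subseteq I(w)$. So every such \emph{generic} vertex is separated from all others at no cost, and, crucially, any collision $I(u)=I(w)$ must involve a common $I$-set that fails this condition, i.e.\ one with at most two codewords or lying wholly within a single pipe. I would then enumerate exactly these exceptional vertices and compare their $I$-sets directly: when the two codewords of an $I$-set do not share a pipe, Lemma~\ref{3uniikki}(i) already singles out the unique other vertex that could reproduce that pair, while the singleton and single-pipe cases are handled by inspecting the sparse grid. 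Confirming that no two exceptional vertices share an $I$-set then completes the separation.

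The main obstacle is precisely this last bookkeeping step: the argument cleanly quarantines the danger to vertices with small or degenerate $I$-sets, but one must still list them and compare their $I$-sets by hand, which is exactly the case analysis recorded in Table~\ref{C^1 table}. There is no structural shortcut replacing that finite check, since $C^1$ is an ad hoc base case rather than the image of a Latin square --- it has $q^2-1$ codewords, so the ``one codeword per pipe'' property of Lemma~\ref{n2 koodi} fails. Thus the verification ultimately rests on confirming the table, with Lemma~\ref{3uniikki} serving only to make the number of cases that genuinely require inspection as small as possible.
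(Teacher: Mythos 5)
Your proposal is correct, and in the end it rests on the same foundation as the paper --- a finite verification of the explicit $15$-word code --- but it organizes that verification quite differently. The paper's proof of Theorem~\ref{C1} is pure brute force: it simply tabulates all $64$ identifying sets in Table~\ref{C^1 table} and observes that they are nonempty and pairwise distinct, invoking no structural lemma whatsoever. You instead bring Lemma~\ref{3uniikki} to bear: since $I(u)=I(w)$ with $u\neq w$ implies mutual containment, part (ii) of that lemma confines any possible collision to vertices whose common $I$-set has at most two codewords or lies inside a single pipe, and part (i) then pins down, for each two-codeword $I$-set not contained in a pipe, the unique second vertex that could reproduce it; domination likewise reduces to inspecting the four empty cells of the grid in Figure~\ref{ID kuutiossa4}. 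This reduction is sound (any two codewords of $I(v)$ lying in distinct pipes through $v$ are never in a common pipe, so Lemma~\ref{3uniikki}(i) applies exactly as you use it), and it is in fact the same technique the paper deploys elsewhere, e.g.\ in the proofs of Theorem~\ref{IDkonstruktio} and Theorem~\ref{laajenna ID}. What your route buys is a much smaller hand-checkable case analysis in place of a $64$-row table; what the paper's route buys is a completely mechanical verification whose record (the appendix table) also serves the additional claim needed later for $C_L$ in Lemma~\ref{C_L}, namely that $I(c)=\{c\}$ for each codeword. Your closing caveat is also accurate: since $C^1$ has $q^2-1=15$ codewords, the one-codeword-per-pipe property of Lemma~\ref{n2 koodi} fails (e.g.\ the tower over $(2,1)$ contains two codewords and the tower over $(1,1)$ none), so no Latin-square shortcut is available and some residual explicit checking is unavoidable.
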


In what follows, we call  the set $Di=\{(j,j,j)\mid 1\leq j\leq 4\}$ as the \emph{diagonal}. We also need another code, $C_L$, to produce the infinite family of codes of the desired cardinality. The code is presented in Figure \ref{ID KpoisD} for the graph $K^3_4[V(K^3_4)\setminus Di]$, that is, for the graph $K^3_4$ with its diagonal vertices $Di$ deleted. Note that the empty squares of the array of Figure~\ref{ID KpoisD} is easy to fill in such a way that a Latin square is obtained.

\begin{lemma}\label{C_L}
The code $C_L=\{(2,1,3)$, $(3,1,4),$ $(4,1,2),$ $(1,2,4),$ $(3,2,1),$ $(4,2,3),$ $(1,3,2),$ $(2,3,4),$ $(4,3,1),$ $(1,4,3),$ $(2,4,1),$ $(3,4,2)\}$ is identifying in $K^3_4[V(K^3_4)\setminus Di]$ and for each codeword $c\in C_L$ we have $I(c)=\{c\}$.
\begin{proof}
By examining Table~\ref{C_L table} in the appendix, we notice that each $I$-set is nonempty and unique. Furthermore, by checking the highlighted vertices we notice that we have $I(c)=\{c\}$ for each codeword $c\in C_L$.
\end{proof}
\end{lemma}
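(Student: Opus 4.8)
The plan is to verify that the specified code $C_L$ of $12$ codewords is identifying in the graph $K^3_4[V(K^3_4)\setminus Di]$ and satisfies $I(c)=\{c\}$ for every codeword $c$. Since this is a finite graph on $64-4=60$ vertices with a code of fixed size, the natural approach is an exhaustive verification, exactly as the authors announce by referencing the appendix table. First I would set up coordinates: recall that in $K^3_4$ two vertices are adjacent precisely when they share a pipe (agree in two of the three coordinates), so for any vertex $u=(x,y,z)$ the closed neighbourhood $N[u]$ consists of $u$ together with every vertex agreeing with $u$ in two coordinates. The restriction to $V(K^3_4)\setminus Di$ simply deletes the four diagonal vertices $(j,j,j)$ and the induced edges, so I would compute $I(u)=N[u]\cap C_L$ within the deleted graph.

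The key steps, in order, are as follows. First I would confirm that the twelve listed codewords all lie off the diagonal, so that $C_L$ is genuinely a code in the deleted graph; a quick inspection shows each listed triple has distinct first two entries from its third, and none equals $(j,j,j)$. Second, for each of the $60$ surviving vertices $u$, I would list the codewords sharing a pipe with $u$, thereby computing $I(u)$; this is the bulk of the work and is precisely what Table~\ref{C_L table} records. Third, I would check domination, i.e.\ that no $I(u)$ is empty, and then check injectivity, i.e.\ that the map $u\mapsto I(u)$ is one-to-one across all $60$ vertices. Fourth, for the twelve codewords specifically, I would verify the stronger property $I(c)=\{c\}$: this says each codeword's only code-neighbour in the deleted graph is itself, equivalently that no two codewords of $C_L$ share a pipe. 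This last condition is in fact a structural feature worth isolating, since the placement of $C_L$ is designed so that its codewords pairwise differ in at least two coordinates.

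The main obstacle is organizational rather than conceptual: the verification amounts to a large but routine case analysis over $60$ vertices, and the difficulty is ensuring completeness and avoiding error, which is why the authors delegate it to a table. A slightly more delicate point is that the graph is the \emph{induced} subgraph with the diagonal removed, so when computing each $I(u)$ I must remember that the deletion of the four diagonal vertices can only remove potential neighbours, not codewords (since no codeword is diagonal); hence $I(u)$ in the deleted graph equals $N[u]\cap C_L$ computed in the full $K^3_4$. To make the injectivity check efficient rather than comparing all $\binom{60}{2}$ pairs, I would exploit Lemma~\ref{3uniikki}: a non-codeword fails to have a distinguishing $I$-set only when its $I$-set has at most two codewords or all its codewords lie in a single pipe, so it suffices to confirm that every non-codeword $u$ has at least three codewords of $C_L$ not confined to one pipe, together with checking the finitely many coincidences that Lemma~\ref{3uniikki}(i) pinpoints. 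This reduces the table-reading to confirming the hypotheses of Lemma~\ref{3uniikki}(ii) for each non-codeword and the self-covering condition $I(c)=\{c\}$ for each codeword, which is exactly the data the highlighted entries of Table~\ref{C_L table} supply.
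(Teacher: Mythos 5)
Your core plan---compute all sixty $I$-sets, check nonemptiness, check injectivity, and check the singleton property $I(c)=\{c\}$ for the twelve codewords---is exactly the paper's proof, which delegates precisely this computation to Table~\ref{C_L table}. Your observation that $I(u)$ in the induced subgraph equals $N[u]\cap C_L$ computed in the full $K^3_4$ (because no codeword lies on the diagonal) is the right justification for organizing the check this way.

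However, the shortcut in your final paragraph contains a genuine error: you claim it suffices to confirm that every non-codeword has at least three codewords of $C_L$ not confined to one pipe, but this condition is \emph{false} for this code and cannot be confirmed. Since no two codewords of $C_L$ share a pipe (this is equivalent to the property $I(c)=\{c\}$), every vertex has at most one codeword per pipe, hence at most three codewords in its $I$-set, and a large fraction of non-codewords have exactly two; for instance $I(2,1,1)=\{(2,1,3),(2,4,1)\}$. So the hypotheses of Lemma~\ref{3uniikki}(ii) fail throughout, and the Lemma~\ref{3uniikki}(i) checks are not ``finitely many coincidences'' but the bulk of the verification: for each size-two $I$-set one must locate the unique second vertex $w$ of $N[c_1]\cap N[c_2]$ and confirm that $w$ either lies on the deleted diagonal or satisfies $|I(w)|=3$ (e.g., the partner of $(2,1,1)$ is $(2,4,3)$, whose $I$-set has three elements). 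Your reduction also omits a third case: a non-codeword $u$ with $|I(u)|=1$, say $I(u)=\{c\}$, would satisfy $I(u)=I(c)$ and collide with the codeword singletons established in your step four; this must be ruled out separately (it does hold here, but note that $C_L$ is not even dominating in the full $K^3_4$---the four diagonal vertices have empty $I$-sets---so it is not automatic). With these two corrections your organized check does go through and is still lighter than comparing all pairs, but as stated it would not constitute a proof.
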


With the help of the codes $C_q,C_L,C^1$ and $Di$, we can construct a family of identifying codes of cardinality $q^2-\frac{q}{4}$ in $K^3_q$ for $q=4^t$, $t\in \Z$ and $t>0$ as described in the following theorem. 

\begin{figure}
  \centering
  \begin{minipage}[b]{0.30\textwidth}
    \captionsetup{justification=raggedright}
    \begin{tikzpicture}
    \clip (-1.21,-1.2) rectangle (2.2,2.45);
    \matrix[bsquare matrix]
    {
    $ $ & $3,4 $ & $1 $ & $2 $ \\
    $2,4 $ & $4 $ & $2 $ & $ $  \\
    $1 $ & $2 $ & $3 $ & $3 $ \\
    $ $ & $4 $ & $ $ & $1,3 $\\
    };
\end{tikzpicture}
\centering
\caption{ Identifying\newline code $C^1$  in $K_4^3$.\newline\newline\newline}\label{ID kuutiossa4}   \end{minipage}
  \hfill
  \begin{minipage}[b]{0.30\textwidth}
    \captionsetup{justification=raggedright}
\begin{tikzpicture}
    \clip (-1.21,-1.2) rectangle (2.2,2.45);
    \matrix[bsquare matrix]
    {
    $ $ & $3 $ & $4 $ & $2 $ \\
    $4 $ & $ $ & $1 $ & $3 $  \\
    $2 $ & $4 $ & $ $ & $1 $ \\
    $3 $ & $1 $ & $2 $ & $ $\\
    };
\end{tikzpicture}
\centering
\caption{ Identifying\newline code $C_L$  in $K_4^3[V(K_4^3)\setminus Di].$\newline\newline\newline}\label{ID KpoisD}  \end{minipage}
  \hfill
  \begin{minipage}[b]{0.31\textwidth}
    \captionsetup{justification=raggedright}
\begin{tikzpicture}
    \clip (-1.21,-1.2) rectangle (2.2,2.45);
    \matrix[bsquare matrix]
    {
    $\mathbf{1} $ & $3 $ & $4 $ & $2 $ \\
    $4 $ & $\mathbf{2}  $ & $1 $ & $3 $  \\
    $2 $ & $4 $ & $\mathbf{3} $ & $1 $ \\
    $3 $ & $1 $ & $2 $ & $\mathbf{4} $\\
    };
\end{tikzpicture}
\centering

\caption{Code $C^t$ where bolded numbers represent cubes with the code $C^{t-1}$\newline and other numbers represent cubes with the code $C_{4^{t-1}}$.}\label{Koodinjakauma4}
  \end{minipage}

\end{figure}

\begin{figure}
\centering
\begin{tikzpicture}
 \draw (-5.27,-5.27) rectangle (5.27,5.27);
  \begin{pgflowlevelscope}{\pgftransformscale{0.75}}
    \matrix[big square matrix]
     {
    |[fill=gray]| $ $ & |[fill=gray]|$3,4 $ & |[fill=gray]|$1 $ &|[fill=gray]| $2 $ & $10 $ & $9 $ & $12 $ & $11 $ & $14 $ & $13 $ & $16 $ & $15 $ & $6 $ & $5 $ & $8 $ & $7 $ \\
   |[fill=gray]| $2,4 $ & |[fill=gray]|$4 $ & |[fill=gray]|$2 $ & |[fill=gray]|$ $ & $9 $ & $12 $ & $11 $ & $10 $ & $13 $ & $16 $ & $15 $ & $14 $ & $5 $ & $8 $ & $7 $ & $6 $ \\
    |[fill=gray]|$1 $ & |[fill=gray]|$2 $ & |[fill=gray]|$3 $ & |[fill=gray]|$3 $ & $12 $ & $11 $ & $10 $ & $9 $ & $16 $ & $15 $ & $14 $ & $13 $ & $8 $ & $7 $ & $6 $ & $5 $ \\
    |[fill=gray]|$ $ & |[fill=gray]|$4 $ & |[fill=gray]|$ $ & |[fill=gray]|$1,3 $ & $11 $ & $10 $ & $9 $ & $12 $ & $15 $ & $14 $ & $13 $ & $16 $ & $7 $ & $6 $ & $5 $ & $8 $ \\
    $ 14$ & $ 13$ & $ 16$ & $ 15$ &  |[fill=gray]|$ $ & |[fill=gray]|$7,8 $ & |[fill=gray]|$ 5$ & |[fill=gray]|$6 $ & $2 $ & $1 $ & $4 $ & $ 3$ & $ 10$ & $ 9$ & $ 12$ & $ 11$ \\
    $ 13$ & $ 16$ & $ 15$ & $14 $ &  |[fill=gray]|$6,8 $ & |[fill=gray]|$ 8$ & |[fill=gray]|$6 $ & |[fill=gray]|$ $ & $1 $ & $4 $ & $3 $ & $2 $ & $ 9$ & $ 12$ & $ 11$ & $ 10$ \\
    $ 16$ & $ 15$ & $ 14$ & $ 13$ &  |[fill=gray]|$5 $ & |[fill=gray]|$ 6$ & |[fill=gray]|$7 $ & |[fill=gray]|$7 $ & $4 $ & $3 $ & $2 $ & $1 $ & $ 12$ & $ 11$ & $ 10$ & $ 9$ \\
    $ 15$ & $ 14$ & $ 13$ & $ 16$ &  |[fill=gray]|$ $ & |[fill=gray]|$ 8$ & |[fill=gray]|$ $ & |[fill=gray]|$ 5,7$ & $3 $ & $2 $ & $1 $ & $ 4$ & $ 11$ & $ 10$ & $ 9$ & $ 12$ \\
    $ 6$ & $ 5$ & $ 8$ & $ 7$ & $ 14$ & $ 13$ & $ 16$ & $ 15$ & |[fill=gray]|$ $ & |[fill=gray]|$ 11,12$ & |[fill=gray]|$ 9$ & |[fill=gray]|$ 10$ & $2 $ & $1 $ & $4 $ & $3 $ \\
    $ 5$ & $ 8$ & $ 7$ & $ 6$ & $ 13$ & $ 16$ & $ 15$ & $ 14$ & |[fill=gray]|$10,12 $ &|[fill=gray]| $12 $ & |[fill=gray]|$10 $ & |[fill=gray]|$ $ & $1 $ & $4 $ & $3 $ & $2 $ \\
    $ 8$ & $ 7$ & $ 6$ & $ 5$ & $ 16$ & $ 15$ & $ 14$ & $ 13$ & |[fill=gray]|$9 $ & |[fill=gray]|$ 10$ & |[fill=gray]|$ 11$ & |[fill=gray]|$ 11$ & $4 $ & $3 $ & $2 $ & $1 $ \\
    $ 7$ & $ 6$ & $ 5$ & $ 8$ & $ 15$ & $ 14$ & $ 13$ & $ 16$ & |[fill=gray]|$ $ & |[fill=gray]|$12 $ & |[fill=gray]|$ $ &|[fill=gray]| $9,11 $ & $3 $ & $2 $ & $1 $ & $ 4$ \\
    $10 $ & $9 $ & $12 $ & $11 $ &  $2 $ & $1 $ & $4 $ & $3 $ & $ 6$ & $ 5$ & $ 8$ & $ 7$ & |[fill=gray]|$ $ & |[fill=gray]|$ 15,16$ & |[fill=gray]|$13 $ & |[fill=gray]|$ 14$ \\
    $ 9$ & $ 12$ & $ 11$ & $ 10$ &  $1 $ & $4 $ & $3 $ & $2 $ & $ 5$ & $ 8$ & $ 7$ & $ 6$ & |[fill=gray]|$ 14,16$ & |[fill=gray]|$ 16$ & |[fill=gray]|$ 14$ & |[fill=gray]|$ $ \\
    $ 12$ & $ 11$ & $ 10$ & $ 9$ &  $4 $ & $3 $ & $2 $ & $1 $ & $ 8$ & $ 7$ & $ 6$ & $ 5$ & |[fill=gray]|$ 13$ & |[fill=gray]|$ 14$ &|[fill=gray]| $ 15$ & |[fill=gray]|$ 15$ \\
    $ 11$ & $ 10$ & $ 9$ & $ 12$ &  $3 $ & $2 $ & $1 $ & $4 $ & $ 7$ & $ 6$ & $ 5$ & $ 8$ & |[fill=gray]|$ $ & |[fill=gray]|$ 16$ & |[fill=gray]|$ $ & |[fill=gray]|$13,15 $ \\
    };

\end{pgflowlevelscope}

\end{tikzpicture}
\caption{An identifying code of size $16^2-4$ in $K^3_{16}$.}\label{16Cube code}
\end{figure}

\begin{theorem}\label{IDkonstruktio}
The code $C^t=\Ext(C_{q/4},C_L)\cup \Ext(C^{t-1},Di)$ is identifying in $K_q^3$, where $q=4^t$ and $t\geq2$, of cardinality $q^2-\frac{q}{4}$.
\begin{proof}
Let $C^t$ be a code recursively defined as $C^t=\Ext(C_{q/4},C_L)\cup \Ext(C^{t-1},Di)$. In other words, the code $C^t$ can be intuitively interpreted as follows. The cube $K_q^3$ can be considered as $K_4^3$ where each vertex is replaced with a \emph{subcube} $K_{q/4}^3$. More precisely, the last three digits of the sextuple notation give the vertex which has been replaced with a $K_{q/4}$ subcube and the first three coordinates give the location within the subcube. Furthermore, the code $C^t$ can be considered as a union of codes $C_{q/4}$ placed into the subcubes given by the code $C_L$ and codes $C^{t-1}$ placed into the subcubes given by the code $Di$ (see Figure~\ref{Koodinjakauma4}). Furthermore, the code $C^2$ is illustrated in Figure~\ref{16Cube code}.

Since codes $C_L$ and $Di$ are separate in the graph $K^3_4$   and $|C^1|=4^2-\frac{4}{4}$, we can use induction on the cardinality $|C^t|$ and thus, have $$|C^t|=|C_L|\cdot |C_{q/4}|+|Di|\cdot |C^{t-1}|=12\cdot\left(\frac{q}{4}\right)^2+4|C^{t-1}|=q^2-\frac{q}{4}.$$ The basic idea of the code $C^t$ is that codes $C_L$ and $Di$  identify the three latter coordinates of the sextuple $(x,y,z,a,b,c)$ and the codes $C_{q/4}$ and $C^{t-1}$ identify the first three coordinates. When $t=2$, $C^1$ is an identifying code in $K^3_4$ (by Theorem~\ref{C1}). Let us now make an induction hypothesis that $C^{t-1}$ is an identifying code in $K^3_{q/4}$ for $t\geq2$.

Let us first show that if for $v=(x,y,z,a,b,c)$ and $w=(x',y',z',a',b',c')$ we have $(a,b,c)\neq (a',b',c')$, then $I(C^t;v)\neq I(C^t;w)$. Let us assume first that $(a,b,c),(a',b',c')\not\in Di$. First notice that by Lemma \ref{n2 koodi} each pipe which goes through a subcube with the code $C_{q/4}$ intersects with exactly one codeword. Hence, if $c=(x'',y'',z'',a'',b'',c'')\in I(C^t;v)$ and $c\not\in \Ext(C^{t-1}, Di)$, then $(a'',b'',c'')\in C_L$. Thus, if $I(C^t;v)=I(C^t;w)$, then $I(C_L;(a,b,c))=I(C_L;(a',b',c'))$ which is not possible since $C_L$ is an identifying code. Moreover, if $(a,b,c)\in Di$, then $I(C^t;v)\subseteq \Ext(C^{t-1},\{(a,b,c)\})$ since $N[Di]\cap C_L=\emptyset$ by Table \ref{n2 koodi} and $K^3_4[Di]$ is a discrete graph.


We now divide the proof into cases based on the location of $v=(x,y,z,a,b,c)$. Let us consider the case where $v$ is such that $(a,b,c)\in C_L$. Moreover, let $w=(x',y',z',a',b',c')$ and let us assume that $I(C^t;v)=I(C^t;w)$. By the previous deduction, we have $(a,b,c)=(a',b',c')$. Moreover, $I(C_{q/4};(x,y,z))=I(C_{q/4};(x',y',z'))$ since $I(C^t;v)=I(C^t;w)$ and $(a,b,c)=(a',b',c')$. Since $C_{q/4}$ is an identifying code, we have $(x,y,z)=(x',y',z')$ and hence, $v=w$.

Let us then consider the case where $(a,b,c)\notin C_L\cup Di$. We have $|I(\Ext(C_{q/4},C_L);v)|\geq 2$ since only codewords have one codeword in their $I$-sets in $C_L$. If  $|I(\Ext(C_{q/4},C_L);v)|\geq 3$, then everything is clear due to Lemma \ref{3uniikki}(ii) since in the codes $C_{q/4}$ and $C_L$ there are no pipes with multiple codewords. Hence, we may assume that  $|I(\Ext(C_{q/4},C_L);v)|=2$. Recall that no pipe in $K^3_q$ has two codewords which are in $\Ext(C_{q/4},C_L)$ and by Lemma \ref{3uniikki}(i) there is exactly one vertex $w$ such that $I(\Ext(C_{q/4},C_L);v)\subseteq I(\Ext(C_{q/4},C_L);w)$. We may assume that the codewords in $I(\Ext(C_{q/4},C_L);v)$ locate in the subcubes which are placed at coordinates $(a',b,c)$ and  $(a,b',c)$. However, now $w$ locates in the subcube at coordinates $(a',b',c)$ which is not possible.

Finally, we have the case $(a,b,c)\in Di$. We immediately notice that if $c\in I(C^t;v)$, then $c$ is of the form $(x',y',z',a,b,c)\in \Ext(C^{t-1},Di)$. Furthermore, the code $C^{t-1}$ is an identifying code in $K_{q/4}^3$ and hence, the vertices within the same diagonal subcube have different $I$-sets than $v$ and thus, the $I$-set of $v$ is unique.
\end{proof}
\end{theorem}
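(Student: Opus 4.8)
The plan is to induct on $t$: the base case $t=2$ is Theorem~\ref{C1} (the code $C^1$ is identifying in $K_4^3$), and the inductive hypothesis is that $C^{t-1}$ is identifying in $K_{q/4}^3$. The cardinality drops out immediately: the subcubes carrying $C_L$ and those carrying $Di$ are vertex-disjoint, so $|C^t| = |C_L|\,|C_{q/4}| + |Di|\,|C^{t-1}| = 12(q/4)^2 + 4|C^{t-1}|$, and with $Q=q/4$ the inductive value $|C^{t-1}| = Q^2 - Q/4$ gives $12Q^2 + 4(Q^2 - Q/4) = 16Q^2 - Q = q^2 - q/4$.

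Throughout I would use the sextuple notation $(x,y,z,a,b,c)$ of the Observation and the projection $\pi(x,y,z,a,b,c)=(a,b,c)$ onto the outer cube $K_4^3$, recording three facts. (1) Since every pipe of $K_q^3$ fixes two outer coordinates, each codeword of $I(C^t;v)$ projects into $N[\pi(v)]$; and because $C_{q/4}$ meets each pipe in exactly one codeword (Lemma~\ref{n2 koodi}), every neighbouring subcube $p\in C_L\cap N[\pi(v)]$ with $p\neq\pi(v)$ contributes exactly one codeword to $I(C^t;v)$. Hence $\pi\bigl(I(C^t;v)\cap\Ext(C_{q/4},C_L)\bigr)=I(C_L;\pi(v))$, a bijection when $\pi(v)\notin C_L$. (2) Every codeword of $C_L$ has three distinct coordinates, so $N[Di]\cap C_L=\emptyset$; as $Di$ is also independent in $K_4^3$, the diagonal subcubes are isolated, and if $\pi(v)\in Di$ then $I(C^t;v)$ lies in the single subcube $\pi(v)$ and equals the embedded copy of $I(C^{t-1};(x,y,z))$. (3) As $C_L$ is a Latin square with its diagonal deleted, $C_L$ meets each pipe of $K_4^3$ in at most one vertex, so $|I(C_L;\pi(v))|\le 3$, with equality only if the three codewords occupy three distinct pipes.

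Now take distinct $v,w$. If $\pi(v)\neq\pi(w)$ and one of them, say $\pi(v)$, is diagonal, then by (2) we have $\pi(I(C^t;v))=\{\pi(v)\}$, while (by (1), when $\pi(w)\notin Di$ the projection $\pi(I(C^t;w))$ contains a point of $C_L$, necessarily off-diagonal) $\pi(I(C^t;w))$ is either a different diagonal point or contains an off-diagonal point, so the $I$-sets differ; if $\pi(v)\neq\pi(w)$ are both off-diagonal, then by (1) the claim reduces to $I(C_L;\pi(v))\neq I(C_L;\pi(w))$, which holds since $C_L$ is identifying in $K_4^3\setminus Di$ (Lemma~\ref{C_L}). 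If instead $\pi(v)=\pi(w)$ and the common subcube carries $C_{q/4}$ (resp.\ $C^{t-1}$), I would restrict both $I$-sets to that subcube, where they equal the inner sets $I(C_{q/4};\cdot)$ (resp.\ $I(C^{t-1};\cdot)$); these differ because $C_{q/4}$ (resp.\ $C^{t-1}$, by induction) is identifying. Domination is visible along the way, since each of these $I$-sets is nonempty.

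The genuinely delicate case is $\pi(v)=\pi(w)=(a,b,c)\notin C_L\cup Di$, where $v$ and $w$ share an \emph{empty} subcube and must be separated solely by codewords of neighbouring subcubes. Writing $I_L(v)=I(C^t;v)\cap\Ext(C_{q/4},C_L)$, fact (1) gives $|I_L(v)|=|I(C_L;\pi(v))|\ge 2$, since a non-codeword of $C_L$ cannot have a singleton $I$-set (codewords already satisfy $I(c)=\{c\}$ by Lemma~\ref{C_L}). If $|I_L(v)|\ge3$, then by (3) its codewords lie in three distinct pipes, hence not in a single pipe of $K_q^3$, and Lemma~\ref{3uniikki}(ii) shows $I(C^t;v)$ is contained in no other $I$-set, so $v$ is separated from every other vertex. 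If $|I_L(v)|=2$, its two codewords cannot share a pipe (otherwise their projections would force two $C_L$-codewords into one $K_4^3$-pipe), so Lemma~\ref{3uniikki}(i) yields a \emph{unique} other vertex that could carry both; the plan is to show this ``fourth corner'' sits in a subcube $(a',b',c)$ that is off-diagonal (else it would be isolated and contain neither codeword) and distinct from $(a,b,c)$, whence $C_L$ identifying forbids the two $I$-sets from coinciding. I expect this last step --- locating the fourth corner and excluding its subcube, together with the bookkeeping that no pipe of $K_q^3$ carries two codewords of $\Ext(C_{q/4},C_L)$ --- to be the main obstacle; every other case follows fairly directly from the three structural facts and the inductive hypothesis.
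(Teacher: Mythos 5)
Your proposal is correct and follows essentially the same route as the paper: the same induction with Theorem~\ref{C1} as base case, the same projection onto the outer cube $K_4^3$ reducing separation of vertices in distinct subcubes to the identifying property of $C_L$ (using Lemma~\ref{n2 koodi} for the one-codeword-per-pipe fact and the isolation of the diagonal subcubes), the same within-subcube reduction to $C_{q/4}$ resp.\ $C^{t-1}$, and the same treatment of the empty-subcube case via Lemma~\ref{3uniikki}, including the fourth-corner argument placing the only competing vertex in the subcube $(a',b',c)$. In fact, your justification of why that fourth corner is harmless (it is off-diagonal and has different outer coordinates, so the $C_L$-reduction separates it) spells out a step the paper dismisses with ``which is not possible.''
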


So far, we have given constructions for identifying codes in $K^3_q$ with $q=4^t$. However, we can further use these codes to construct new identifying codes for other values of $q$. For this purpose, we use Latin squares and Evans' Theorem.
\begin{theorem}[Evans' Theorem \cite{evansLatin}]\label{evans}
Any $q\times q$ Latin square can be extended into an $r\times r$ Latin square if $r\geq 2q$.

\end{theorem}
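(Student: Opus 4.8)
The plan is to place the given $q\times q$ Latin square $L$ in the top-left corner of an $r\times r$ grid and to complete it to a full Latin square on the symbols $\{1,\dots,r\}$ in two stages: first filling in the remaining entries of the top $q$ rows to obtain a $q\times r$ Latin rectangle, and then appending the remaining $r-q$ rows one at a time. The hypothesis $r\ge 2q$ is needed only in the first stage.

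For the first stage, observe that row $i$ of $L$ (for $1\le i\le q$) already contains each of the symbols $1,\dots,q$ exactly once, so to keep the rows Latin we must fill the $r-q$ empty cells in columns $q+1,\dots,r$ with a permutation of the new symbols $\{q+1,\dots,r\}$. The only remaining constraint is that within each of the columns $q+1,\dots,r$ the symbols be distinct. Thus the block formed by rows $1,\dots,q$ and columns $q+1,\dots,r$ must be a $q\times(r-q)$ Latin rectangle on the $r-q$ symbols $\{q+1,\dots,r\}$. Such a rectangle exists precisely when the number of rows does not exceed the number of symbols, i.e. when $q\le r-q$, which is exactly the hypothesis $r\ge 2q$; explicitly, one may place the symbol $q+1+((i+j)\bmod(r-q))$ in cell $(i,q+j)$, which is Latin in both directions as soon as $q\le r-q$. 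Since this filled block uses only symbols from $\{q+1,\dots,r\}$ while columns $1,\dots,q$ use only $\{1,\dots,q\}$, no column conflict is introduced, and the top $q$ rows now form a $q\times r$ Latin rectangle.

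For the second stage, I would repeatedly extend a $p\times r$ Latin rectangle ($q\le p<r$) to a $(p+1)\times r$ one. Form the bipartite graph whose parts are the $r$ columns and the $r$ symbols, joining a column to a symbol whenever that symbol does not yet appear in that column. Each column is missing exactly $r-p$ symbols, and each symbol, appearing once in each of the $p$ rows, occupies $p$ distinct columns and is therefore missing from exactly $r-p$ columns; hence this graph is $(r-p)$-regular. A regular bipartite graph admits a perfect matching by Hall's theorem (equivalently König's theorem), and such a matching assigns to each column a distinct missing symbol, yielding a valid new row. Iterating from $p=q$ up to $p=r-1$ completes the array to an $r\times r$ Latin square that contains $L$ in its top-left corner.

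The heart of the argument is the first stage: it is exactly the requirement that a $q\times(r-q)$ Latin rectangle on $r-q$ symbols fit into the top-right block that forces $r\ge 2q$, and this is where the bound is sharp. The second stage, by contrast, is the classical fact that any Latin rectangle on $r$ symbols with fewer than $r$ rows extends to a Latin square, and it presents no difficulty once the regularity of the auxiliary bipartite graph is observed.
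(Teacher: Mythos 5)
Your proof is correct, but there is nothing in the paper to compare it against: the paper states this result as Evans' Theorem with a citation to Evans' 1960 article and gives no proof of its own, so your argument is a genuine addition rather than an alternative route. Your two-stage argument is sound: the explicit cyclic filling of the top-right $q\times(r-q)$ block with the new symbols $\{q+1,\dots,r\}$ (placing $q+1+((i+j)\bmod (r-q))$ in cell $(i,q+j)$) is row- and column-Latin exactly when $q\le r-q$, and the subsequent row-by-row completion of a $p\times r$ Latin rectangle via a perfect matching in the $(r-p)$-regular column--symbol bipartite graph is the classical Hall-type argument; iterating it from $p=q$ to $p=r-1$ finishes the square. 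One caveat worth recording: what Evans actually proved is stronger, namely that any \emph{partial} (incomplete) $q\times q$ Latin square embeds in an $r\times r$ Latin square whenever $r\ge 2q$, and that version needs more machinery (Ryser-type conditions on completing rectangles with prescribed symbol counts). Your argument covers only the case of a completely filled $q\times q$ Latin square, but that is precisely the statement quoted here and the only form the paper uses (Theorem~\ref{laajenna ID} extends a complete Latin square $L'$ sitting in the corner of the grid), so your proof fully suffices for the paper's purposes; your closing remark that the bound is forced by the top-right block is also valid for this complete-square version, as the $2\times 2$ square on $\{1,2\}$ already fails to extend to a $3\times 3$ square.
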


\begin{figure}[H]
\centering
\begin{tikzpicture}
    \clip(-2.3,-2.3) rectangle (2.3,2.3);
    \matrix[square matrix]
    {
     |[fill=gray]| $ $   & |[fill=gray]| $3,4 $ & |[fill=gray]| $1 $ & |[fill=gray]| $2 $ & $9 $ &  $5 $ &  $6 $ &  $7 $ &  $8 $ \\
     |[fill=gray]| $2,4 $ & |[fill=gray]| $4 $   & |[fill=gray]| $2 $ & |[fill=gray]| $ $ & $5 $ &  $6 $ &  $7 $ &  $8 $ &  $9 $  \\
     |[fill=gray]| $1 $   & |[fill=gray]| $2 $   & |[fill=gray]| $3 $ & |[fill=gray]| $3 $ & $6 $ &  $7 $ &  $8 $ &  $9 $ &  $5 $ \\
     |[fill=gray]| $ $   & |[fill=gray]| $4 $   & |[fill=gray]| $ $ & |[fill=gray]| $1,3 $ &  $7 $ &  $8 $ &  $9 $ &  $5 $ &  $6 $ \\
     $ 9$ &  $ 5$ &  $ 6$ &  $ 7$ & $ 8$ & $ 4$ & $ 3$ & $ 2$ & $ 1$ \\
     $ 8$ &  $ 9$ &  $ 5$ &  $ 6$ & $ 4$ & $ 3$ & $ 2$ & $ 1$ & $ 7$ \\
     $ 7$ &  $ 8$ &  $ 9$ &  $ 5$ & $ 3$ & $ 2$ & $ 1$ & $ 6$ & $ 4$ \\
     $ 6$ &  $ 7$ &  $ 8$ &  $ 9$ & $ 2$ & $ 1$ & $ 5$ & $ 4$ & $ 3$ \\
     $ 5$ &  $ 6$ &  $ 7$ &  $ 8$ & $ 1$ & $ 9$ & $ 4$ & $ 3$ & $ 2$ \\
    };
\end{tikzpicture}

\centering
\caption{ Identifying code of size $80$ in $K_9^3$.}\label{9x9 koodi}\centering


\end{figure}

\begin{theorem}\label{laajenna ID}
Let $C$ be an identifying code in $K_q^3$ of cardinality $m$. If $r \geq 2q$, then we have an identifying code in $K_r^3$ of cardinality $r^2-q^2+m$.
\begin{proof}
Let $C$ be an identifying code in $K_q^3$ of cardinality $m$ and $r\geq 2q$. Let us consider a $q\times q$ Latin square $L'$ with values from $1$ to $q$. According to Theorem~\ref{evans}, we can extend the Latin square $L'$ into an $r\times r$ Latin square $L$. Let us assume that the Latin square $L'$ locates in the coordinates $(x,y,z)$, where $x,y,z\leq q$. Moreover, we use notation $(x,y,z)\in L$ if there is value $z$ at the location $(x,y)$ in the Latin square. Let us have
$$
C'=C\cup\left\{(x,y,z)\mid \max\{x,y\}\geq q+1\text{ and } (x,y,z)\in L\right\} \text{.}
$$
The code $C'$ is illustrated in Figure~\ref{9x9 koodi} when $q=4$, $r=9$ and the original code $C = C^1$. We have $|C'|=r^2+m-q^2$. Moreover, we have the following two observations on the structure of the code.

\begin{description}
\item[Observation 1:] Each pipe $P$ with at least one of the two fixed  coordinates greater than $q$, has exactly one codeword in it. Indeed, since $P$ is a pipe with at least one  fixed coordinate  greater than $q$, it does not intersect with $L'$ and hence, it does intersect with $L\setminus L'$. Note that since $L$ is a Latin square, each pipe intersects with exactly one vertex in $L$.
\item[Observation 2:] Vertex $(x,y,z)$ does not belong to $C'$ if exactly one of the three coordinates is greater than $q$. Indeed,  if a vertex $(x,y,z)$ with exactly one coordinate greater than $q$ is a codeword, then the pipe which intersects with $(x,y,z)$ and $L'$ (there is such a pipe) contradicts against the structure of the Latin square $L$.
\end{description}


Let us show that $C'$ is an identifying code by dividing the proof into cases based on the location of the vertex $v = (x,y,z)\in V(K^3_q)$  and whether $v$ is a codeword or not. Let us first consider the case where at least two of the coordinates $(x,y,z)$ of the vertex $v$ are greater than $q$ and $v$ is a non-codeword. Hence, there is exactly one codeword in each pipe intersecting with $v$ by Observation $1$, $|I(v)|=3$ and the codewords in $I(v)$ do not locate within a single pipe. Therefore, $v$ is now uniquely identified by Lemma \ref{3uniikki}.

Let us then consider the case where exactly one of the coordinates $(x,y,z)$ is greater than $q$. Now $v$ is a non-codeword by Observation $2$. We have $|I(v)|\geq 2$ since two of the pipes going through $v$ fix the coordinate which is greater than $q$. Hence, if $|I(v)|>2$, then the $I$-set is unique. On the other hand, if $|I(v)|=2$, then there is another vertex $w$ which has those two codewords in its $I$-set. Now, if $I(v)=\{c,c'\}$, then exactly one coordinate of $c$ is less than $q+1$ and the same is true for $c'$ due to Observation $2$ and since the codewords in $I(v)$ locate in the pipes with a fixed  coordinate greater than $q$.  Moreover, $c$ and $c'$ locate in different pipes and hence, those pipes fix different coordinate as less than $q+1$. Thus, each coordinate of $w$ is greater than $q$. Hence, $|I(w)|\geq3$ and  $I(v)$ is unique.


Let us then consider the case where at least two of the coordinates $(x,y,z)$ are greater than $q$ and $v$ is a codeword. We have $I(v)=\{v\}$ and  each neighbour of $v$ has at least two codewords in its $I$-set as we have seen in the  two previous cases.

Finally, we have the case $x,y,z\leq q$. Now the vertex $v$ is identified by the code $C$ since $C$ is an identifying code, each vertex with a coordinate greater than $q$ has codewords in its $I$-set which do not belong to $C$ by the previous considerations and $I(v)\subseteq C$ by Observation $2$.
\end{proof}
\end{theorem}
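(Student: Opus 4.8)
The plan is to assemble the code $C'$ in $K_r^3$ from two pieces: the given identifying code $C$, which I keep untouched on the ``corner'' subcube $\{(x,y,z)\mid x,y,z\le q\}$, and a layer of codewords dictated by an extended Latin square that fills everything outside the corner. Concretely, I would start from any $q\times q$ Latin square $L'$ on symbols $\{1,\dots,q\}$ and use Evans' Theorem~\ref{evans} to extend it to an $r\times r$ Latin square $L$; this extension is exactly where the hypothesis $r\ge 2q$ enters. I then set $C'=C\cup\{(x,y,z)\mid \max\{x,y\}\ge q+1,\ (x,y,z)\in L\}$. The count is immediate: $L$ places one codeword in each of the $r^2-q^2$ positions $(x,y)$ with $\max\{x,y\}\ge q+1$, and $C$ supplies its $m$ codewords, so $|C'|=r^2-q^2+m$, as required.

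Before any case analysis I would isolate two structural facts that let the Latin square govern the cube outside the corner. First, any pipe that fixes a coordinate larger than $q$ contains exactly one codeword of $C'$: such a pipe cannot meet $C$ and cannot meet $L'$, so the single vertex in which it meets $L$ (Latin square property) is its unique codeword. Second, a vertex with \emph{exactly} one coordinate larger than $q$ is never a codeword, since otherwise the pipe joining it to the corner square $L'$ would repeat a symbol of $L$. Getting the second statement right in each of its three coordinate subcases is the first point that needs care.

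With these observations available, the plan is to show $I(v)$ is nonempty and distinct from every other $I$-set by splitting on how many coordinates of $v$ exceed $q$, and on whether $v\in C'$. If $v$ has at least two large coordinates, then all three pipes through $v$ are ``large,'' so the first observation surrounds $v$ with three codewords in distinct pipes; Lemma~\ref{3uniikki}(ii) would then give that $I(v)$ lies in no other $I$-set (hence is unique), with the codeword case reducing to $I(v)=\{v\}$ against strictly larger neighbouring $I$-sets. If exactly one coordinate of $v$ is large, the second observation makes $v$ a non-codeword and the two large pipes force $|I(v)|\ge2$. Finally, if $x,y,z\le q$ the second observation gives $I(v)\subseteq C$, so $v$ is separated from other corner vertices by the identifying property of $C$ and from every non-corner vertex, whose $I$-set always contains a codeword of $L\setminus L'$ lying outside $C$.

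I expect the real obstacle to be the subcase $|I(v)|=2$ with exactly one large coordinate, where a priori some partner vertex $w$ might share $v$'s two-element $I$-set. The plan there is to locate the two codewords exactly: each sits in a large pipe, so by the second observation each has a single small coordinate, and these small coordinates occupy different positions. Feeding this into Lemma~\ref{3uniikki}(i) to find the unique common neighbour $w$, I would verify that all three coordinates of $w$ exceed $q$, whence the first observation yields $|I(w)|\ge3\neq|I(v)|$ and no collision occurs. Collecting the four cases, and noting each $I$-set is nonempty, would establish that $C'$ is identifying in $K_r^3$.
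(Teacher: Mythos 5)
Your proposal is correct and follows essentially the same route as the paper's proof: the same construction $C'=C\cup\{(x,y,z)\mid \max\{x,y\}\ge q+1,\ (x,y,z)\in L\}$ via Evans' Theorem, the same two structural observations about pipes with a large fixed coordinate and about vertices with exactly one large coordinate, and the same case analysis, including the identical resolution of the critical $|I(v)|=2$ subcase by locating the partner vertex $w$ with all coordinates exceeding $q$ and invoking Lemma~\ref{3uniikki}. No substantive differences to report.
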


By considering the identifying code $C^t$ and Theorem \ref{laajenna ID}, we get the following corollary which gives an identifying code in $K^3_q$ for all $q\geq8$ of cardinality less than $q^2$.
\begin{corollary}
If $2\cdot 4^t\leq q \leq 2\cdot 4^{t+1}-1$, then we have $$\gamma^{ID}(K_q^3)\leq q^2-4^{t-1}.$$
\end{corollary}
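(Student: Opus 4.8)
The plan is to combine the construction of Theorem~\ref{IDkonstruktio} with the extension procedure of Theorem~\ref{laajenna ID}; no new combinatorial argument is needed, only a careful bookkeeping of cardinalities. Given $q$ in the stated range, I would first observe that the upper endpoint $q \leq 2\cdot 4^{t+1}-1$ plays no role in the actual construction: it merely singles out the value of $t$ for which the resulting bound is strongest (and guarantees that the half-open intervals $[2\cdot 4^t,\,2\cdot 4^{t+1})$ tile all integers $q\geq 8$). The only inequality I will use is the lower endpoint $q\geq 2\cdot 4^t$.

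First I would secure an identifying code in the ``small'' cube $K_{4^t}^3$ of cardinality $(4^t)^2-4^t/4 = 4^{2t}-4^{t-1}$. For $t\geq 2$ this is exactly the code $C^t$ of Theorem~\ref{IDkonstruktio}, whose cardinality is $(4^t)^2-\frac{4^t}{4}$. For the base case $t=1$, the code $C^1$ of Theorem~\ref{C1} is identifying in $K_4^3$ and has cardinality $15 = 4^2 - 4/4$, which is of the same form. Thus for every $t\geq 1$ there is an identifying code in $K_{4^t}^3$ of cardinality $4^{2t}-4^{t-1}$.

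Then I would apply Theorem~\ref{laajenna ID} with the smaller order $4^t$, the larger order $r=q$, and $m=4^{2t}-4^{t-1}$. The hypothesis $r\geq 2\cdot 4^t$ of that theorem is precisely our lower endpoint $q\geq 2\cdot 4^t$, so it is satisfied. Theorem~\ref{laajenna ID} then yields an identifying code in $K_q^3$ whose cardinality is
\[
r^2-(4^t)^2+m = q^2-4^{2t}+\bigl(4^{2t}-4^{t-1}\bigr)=q^2-4^{t-1},
\]
and the claimed bound $\gamma^{ID}(K_q^3)\leq q^2-4^{t-1}$ follows.

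There is essentially no hard step here: the argument is a direct concatenation of two already-proved results, and the only point requiring care is matching the parameters so that the two $4^{2t}$ terms cancel. The one place to double-check is that the base case $t=1$ is covered by $C^1$ (since the recursion in Theorem~\ref{IDkonstruktio} is stated only for $t\geq 2$), which I have accounted for above.
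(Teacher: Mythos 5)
Your proposal is correct and follows exactly the paper's intended argument: the paper's (implicit) proof is precisely to feed the code $C^t$ in $K_{4^t}^3$ of cardinality $4^{2t}-4^{t-1}$ into Theorem~\ref{laajenna ID} with $r=q\geq 2\cdot 4^t$, yielding $q^2-4^{2t}+(4^{2t}-4^{t-1})=q^2-4^{t-1}$. Your explicit handling of the base case $t=1$ via $C^1$ (since the recursion of Theorem~\ref{IDkonstruktio} starts at $t\geq 2$) and your remark that the upper endpoint of the range only selects the optimal $t$ are careful touches the paper leaves unstated, but the route is the same.
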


\section{Lower bound for identification in $K_q^3$}

With our construction and the lower bound of Goddard and Wash, we now know that $q^2-q\sqrt{q}\leq \gamma^{ID}(K^3_q)\leq q^2-\frac{q}{4}$ when $q=4^t$, $t\in \Z$, $t>0$. In this section,  we improve the lower bound to $\gamma^{ID}(K_q^3)\ge q^2-\frac{3}{2}q$. The standard techniques for obtaining lower bounds for identifying codes in graphs are based on the covering properties of balls or symmetric differences (see \cite{lowww}). For $K_q^3$ these methods are not powerful enough, so we provide a new approach, which builds on the method of Goddard and Wash \cite{GWIDcpg}.

\begin{definition}
Let $C$ be a code in $K_q^3$ and $i$ be an integer such that $1\leq i\le q$. Define an $x_i$\textit{-layer} of the graph $K_q^3$, denoted by $D^1_i$, as the set of vertices which fixes the coordinate $x=i$, i.e., $D^1_i=\{(i,y,z)\mid 1\leq y,z\leq q\}$. Analogously, we define a $y_i$\emph{-layer} $D^2_i$ and a $z_i$\emph{-layer} $D^3_i$. Let $j$ be an integer such that $1\le j\le3$. Define then $X^j_i = \{ v \in D_i^j \mid I(C;v)\cap D^j_i=\emptyset \}$ and $Y_i^j = \{ v \in D_i^j \mid I(C;v)\cap D^j_i= \{ v \} \}$. 
Furthermore, we use the following notation: $X^j=\bigcup_{i=1}^q X_i^j$, $Y^j=\bigcup_{i=1}^q Y_i^j$ and $X=\bigcup_{j=1}^3 X^j$, $ Y=\bigcup_{j=1}^3 Y^j$ and $C^j_i=C\cap D^j_i$. A codeword which does not belong to $Y$ is called a \emph{corner}, and a \emph{fellow} is a codeword belonging to $Y$ such that it has another codeword in its open neighbourhood.
\end{definition}


\begin{lemma}\label{fellowCorner}
Let $C$ be an identifying code in $K^3_q$. For a pipe $P$, the following statements hold:
\begin{itemize}
\item[(i)] The pipe $P$ does not contain multiple fellows.
\item[(ii)] The pipe $P$ does not contain a corner, a fellow and a vertex $v\in X$.
\item[(iii)] The pipe $P$ does not contain a codeword and vertices $v,v'\in X$.
\end{itemize}

\begin{proof}
(i) Let $c$ and $c'$ be fellows in $P$. Therefore, as $c' \in I(c)$ and $c \in I(c')$, we have a contradiction;  $I(c) = I(c')$. 
(ii) Let $c$ be a corner, $c'$ a fellow and $v$ a vertex of $X$ in $P$. Hence, since $c \in I(c')$ and $c \in I(v)$, we have $I(c') \subseteq P$ and $I(v)\subseteq P$. This implies that $I(c)=I(v)$. (iii) Let $c$ be a codeword and $v,v'\in X$ in $P$. Similarly, we again have $I(v)\subseteq P$ and $I(v')\subseteq P$ and, thus, $I(v)=I(v')$. 
\end{proof}
\end{lemma}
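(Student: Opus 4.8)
The plan is to prove each of the three statements by the same basic mechanism: whenever a pipe $P$ contains enough codewords and $X$-vertices, several vertices are forced to have their entire $I$-set contained in $P$, and two such vertices then collide, contradicting that $C$ is identifying. The key structural fact I would use repeatedly is this: if $v\in X$ lies in pipe $P=D_i^j$, then by definition $I(C;v)\cap D_i^j=\emptyset$, which means $v$ has no codeword in the layer $D_i^j$; but the only neighbours of $v$ lying in the same pipe as $v$ are exactly the other vertices of $P$, so in fact any codeword in $I(v)$ that lies in $P$ would have to sit inside $D_i^j$. Hence $I(v)\cap P\subseteq I(v)\cap D_i^j=\emptyset$, forcing all of $I(v)$ to come from outside $P$ — more precisely, the relevant observation is that a vertex of $X$ in the pipe $P$ contributes its $I$-set through the two \emph{other} pipe directions, and when another codeword $c\in P$ also satisfies $c\in I(v)$ we learn $I(v)\subseteq P$. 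I would state this small observation cleanly first, since all three parts lean on it.

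For part (i), I would argue directly as the statement suggests: if $c,c'$ are both fellows in $P$, then being fellows they are mutually adjacent codewords (they lie in the same pipe), so $c'\in N[c]\cap C=I(c)$ and $c\in I(c')$. The defining property of a fellow, together with $c$ and $c'$ belonging to the layer that defines $Y$, forces $I(c)=\{c,c'\}=I(c')$, contradicting identifiability. For part (ii), I would take a corner $c$, a fellow $c'$, and $v\in X$ all inside $P$. Since $c'$ is a fellow it has a codeword neighbour, and $c\in N[c']$, so $c\in I(c')$; likewise $c\in I(v)$ because $c\in P\cap N[v]$. The point is that both $c'$ and $v$ receive the codeword $c$ only through the pipe $P$, so combined with the layer condition this pins $I(c')\subseteq P$ and $I(v)\subseteq P$; since a corner's $I$-set within $P$ is itself forced to be essentially $\{c\}$ augmented only by pipe-internal codewords, I can conclude $I(c)=I(v)$, again a contradiction. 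Part (iii) is the cleanest: with a codeword $c$ and two vertices $v,v'\in X$ in $P$, the same reasoning gives $I(v)\subseteq P$ and $I(v')\subseteq P$, and since the only codeword available inside $P$ to either of them is $c$, we get $I(v)=\{c\}=I(v')$, contradicting that $C$ is identifying.

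The main obstacle I anticipate is making the inclusion ``$I(v)\subseteq P$'' fully rigorous rather than merely plausible. The subtlety is that $v\in X_i^j$ only guarantees no codeword in the \emph{single} layer $D_i^j$; it does not immediately say $I(v)$ is confined to the pipe $P$. The bridge is that $v$ already has a codeword $c$ sitting in $P$ (the corner or the other codeword), and any vertex in $K_q^3$ lies in exactly three pipes; the definition of $X$ kills the contribution from the layer containing $P$, and I must check that the presence of $c\in I(v)$ forces the remaining two pipe-directions to contribute nothing new, i.e.\ that $I(v)=\{c\}$ or at least $I(v)\subseteq P$. I would handle this by carefully unpacking which layer the defining condition $I(C;v)\cap D_i^j=\emptyset$ removes and verifying that $c$, lying in $P\subseteq D_i^j$, is consistent with $v\in X_i^j$ precisely because $c\notin D_i^j$ in the relevant coordinate — so the cleanest route is to fix coordinates explicitly for $P$ and $v$ and trace where $c$, $c'$ can live, exactly as in the proof of Lemma~\ref{3uniikki}. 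Once that bookkeeping is pinned down, each contradiction $I(c)=I(v)$, $I(c')=I(c)$, or $I(v)=I(v')$ follows immediately from the hypothesis that $C$ is identifying.
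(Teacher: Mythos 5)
Your skeleton is the same as the paper's: show that the relevant vertices have their whole $I$-sets inside $P$, and then use the fact that $P$ is a clique --- so any vertex $u \in P$ with $I(u) \subseteq P$ has $I(u) = C \cap P$ exactly --- to exhibit two distinct vertices with equal $I$-sets, contradicting identifiability. But your write-up of the key structural fact is geometrically backwards in a way that needs fixing. You assert both that $c$ lies in ``$P \subseteq D_i^j$'' and that ``$c \notin D_i^j$'', which is self-contradictory; the correct picture is: if a codeword $c \in I(v)$ lies in a pipe $P$ through $v \in X_i^j$, then the witnessing layer $D_i^j$ cannot contain $P$ (otherwise $c \in I(v) \cap D_i^j = \emptyset$), so $D_i^j$ contains the \emph{other two} pipes through $v$, those two pipes therefore contain no codeword of $I(v)$, and hence $I(v) \subseteq P$. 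In other words, the definition of $X$ kills the two pipe directions other than $P$, not (as you write) ``the contribution from the layer containing $P$''.

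The second, more serious, problem is your conclusion of part (ii). You end with $I(c) = I(v)$ where $c$ is the \emph{corner}, justified by confining the corner's $I$-set to $P$. That step would fail: a codeword is a corner precisely when it is not in $Y$, which forces it to have codeword neighbours in at least two distinct pipes through it, so a corner's $I$-set is \emph{never} contained in a single pipe; since $I(v) \subseteq P$, the equality $I(c) = I(v)$ is in fact impossible. The contradiction must be between the \emph{fellow} $c'$ and $v$: from the two inclusions you already derived, $I(c') \subseteq P$ and $I(v) \subseteq P$, the clique property gives $I(c') = C \cap P = I(v)$, and $c' \neq v$ because $c'$ is a codeword while vertices of $X$ are not; this contradicts identifiability. (The paper's printed proof also writes ``$I(c)=I(v)$'', but that is evidently a typo for $I(c') = I(v)$; its argument uses only the two inclusions.) Finally, your intermediate claims $I(c) = \{c, c'\}$ in (i) and $I(v) = \{c\} = I(v')$ in (iii) are stronger than what is justified, since $P$ may contain further codewords; the statements you actually need, and can prove, are $I(c) = C \cap P = I(c')$ and $I(v) = C \cap P = I(v')$.
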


We can show that for each vertex $x\in X$, there is a corner that is linked to the vertex $x$. Later, in the proof of Lemma~\ref{kulmavaikute}, we show that each corner is linked to at most three vertices of $X$.
\begin{lemma}\label{kulmatarve}
Let $C$ be an identifying code in $K_q^3$. If $x\in X$, then there exists a codeword $c\in I(x)$ such that $c$ is a corner or a fellow with a corner in $I(c)$.
\begin{proof}
Let $C$ be an identifying code in $K_q^3$ and let $x\in X$. Since $C$ is an identifying code, we have $I(x) \neq \emptyset$. Let us say that $c\in I(x)$. We can assume that $c$ is not a corner since otherwise we are immediately done.  Furthermore, if $I(c)=\{c\}$, then $I(c)=I(x)$. We can now assume that there exists another codeword $c'\in I(c)$ ($c'\neq c$). Now since $c\in Y$ and $|I(c)|\geq 2$, $c$ is a fellow. Moreover, by Lemma~\ref{fellowCorner}(i), $c'$ is not a fellow and therefore it is not in $Y$. Thus, $c'$ is a corner.\end{proof}
\end{lemma}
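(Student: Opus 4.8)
The plan is to fix $x\in X$ and produce a single codeword $c\in I(x)$ that realizes the stated dichotomy. Since an identifying code is dominating we have $I(x)\neq\emptyset$, so I would begin by choosing an arbitrary $c\in I(x)$ and arguing that \emph{this} $c$ is either a corner or a fellow whose $I$-set contains a corner. If $c$ is a corner, the first alternative holds and there is nothing further to prove; thus the substantive case is $c\in Y$, and the goal there is to upgrade $c$ to a fellow and to locate a corner inside $I(c)$.

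The crux of the argument is a structural observation about membership in $X$, and I expect isolating it cleanly to be the main obstacle. Because $x\in X$, there is a direction $j$ for which $I(x)\cap D^j_i=\emptyset$, where $i$ is the $j$-th coordinate of $x$. Now $N[x]$ is the union of the three pipes through $x$; the two pipes that vary a coordinate other than $j$ keep the $j$-th coordinate equal to $i$ and hence lie inside $D^j_i$, while only the pipe $P$ varying coordinate $j$ leaves that layer. Consequently $I(x)\cap D^j_i=\emptyset$ forces every codeword of $I(x)$ onto the single pipe $P$, i.e. $I(x)\subseteq P$. Without this single-pipe confinement the contradiction in the next step does not materialize.

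With this in hand I would first rule out $|I(c)|=1$. Since $c\in I(x)\subseteq P$, the pipe $P$ is also the pipe through $c$ varying coordinate $j$, so $P\subseteq N[c]$; hence $I(x)\subseteq P\cap C\subseteq N[c]\cap C=I(c)$. If $I(c)=\{c\}$ this yields $I(x)\subseteq\{c\}$, and together with $c\in I(x)$ we get $I(x)=\{c\}=I(c)$ with $x\neq c$ (as $x\notin C$), contradicting that $C$ is identifying. Therefore there exists a second codeword $c'\in I(c)$ with $c'\neq c$, and since $c\in Y$ now carries a further codeword in its open neighbourhood, $c$ is a fellow.

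Finally I would identify $c'$ as the required corner. The codewords $c$ and $c'$ share a pipe because $c'\in N(c)$, so by Lemma~\ref{fellowCorner}(i) that pipe cannot contain two fellows; as $c$ is already a fellow, $c'$ is not. But $c'$ does have another codeword, namely $c$, in its open neighbourhood, so were $c'\in Y$ it would itself be a fellow. Hence $c'\notin Y$, meaning $c'$ is a corner lying in $I(c)$. Thus the chosen $c$ is a fellow with a corner in $I(c)$, which establishes the second alternative and completes the proof.
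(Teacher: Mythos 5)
Your proof is correct and follows essentially the same route as the paper's: pick $c\in I(x)$, dispose of the corner case, rule out $I(c)=\{c\}$ via the identifying property, conclude $c$ is a fellow, and apply Lemma~\ref{fellowCorner}(i) to see that the second codeword $c'\in I(c)$ is a corner. The only difference is that you spell out the pipe-confinement fact $I(x)\subseteq P\subseteq N[c]$ that the paper leaves implicit in its terse step ``if $I(c)=\{c\}$, then $I(c)=I(x)$,'' which is a worthwhile clarification but not a different argument.
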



\begin{definition}
Let $C$ be an identifying code in $K^3_q$ and $c\in C$ be a corner. If there are two codewords $c',c''\in I(c)\cap D^j_i$ which do not belong to the same pipe, then we say that $c$ is a corner of the layer $D^j_i$. Furthermore, we denote by $k^j_i$ the total number of corners of the layer $D^j_i$.
\end{definition}

We have shown that there is corner for each vertex in $X$. We will further show that each corner can be associated to at most three vertices of $X$.
\begin{lemma}\label{kulmavaikute}
Let $C$ be an identifying code in $K_q^3$. Then we have $$|X|\leq 3\sum_{j=1}^3\sum_{i=1}^q k^j_i.$$
\begin{proof}
Let $c\in C$ be a corner.  By Lemma~\ref{fellowCorner}, we have in total at most three fellows and vertices of $X$ in $N[c]$. Moreover, if $c'$ is a fellow, then $|N(c')\cap X|\leq 1$ since if $v,v'\in N(c')\cap X$, then $I(v)=I(v')$. Hence, for each corner $c$ there are at most three vertices in $X$ such that they are in the neighbourhood of $c$ or in the neighbourhood of a fellow $c'\in I(c)$.



With the aid of Lemma \ref{kulmatarve}, we notice that for each vertex $x\in X$ there exists in $N(x)$ a corner or a fellow with a corner in its neighbourhood. Thus, we have $|X|\leq 3\left|\{c\mid c \text{ is a corner}\}\right|$. Moreover, each corner is counted  in the sum $\sum_{j=1}^3\sum_{i=1}^q k^j_i$ and hence, we have $|X|\leq 3\sum_{j=1}^3\sum_{i=1}^q k^j_i$.
\end{proof}
\end{lemma}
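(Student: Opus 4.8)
The plan is to run a charging argument that sends each vertex of $X$ to a uniquely determined corner, show that every corner receives at most three charges, and finally argue that the quantity $\sum_{j=1}^3\sum_{i=1}^q k^j_i$ is at least the number of corners. Combining these three facts gives $|X|\le 3\,\#\{\text{corners}\}\le 3\sum_{j=1}^3\sum_{i=1}^q k^j_i$.

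First I would define the charge. By Lemma~\ref{kulmatarve}, for every $x\in X$ the set $I(x)$ contains either a corner or a fellow $c'$ with a corner in $I(c')$. I would charge $x$ \emph{directly} to a corner in $I(x)$ whenever one exists, and otherwise \emph{route} $x$ through a fellow $c'\in I(x)$ to a corner in $I(c')$. The key preliminary observation is that a vertex $x\in X$ has all of its codewords on a single pipe: since $x$ lies in some $X^j_i$, the two pipes through $x$ inside the layer $D^j_i$ are codeword free, so $I(x)$ equals the set of all codewords on the remaining pipe through $x$. Consequently, if $x$ is routed through a fellow $c'$ (so $I(x)$ contains no corner), then all codewords of $I(x)$ lie on one pipe; by Lemma~\ref{fellowCorner}(i) that pipe carries at most one fellow, and a second codeword on it would be neither a corner nor a fellow, hence a codeword with a singleton $I$-set, which is impossible as it is adjacent to $c'$. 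Thus $I(x)=\{c'\}$ for every routed $x$.

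Next I would bound the charges on a fixed corner $c$ by examining the three pipes through it one at a time. By Lemma~\ref{fellowCorner}(ii) a pipe containing the corner $c$ cannot contain both a fellow and a vertex of $X$. Hence each such pipe is of one of two types: either it carries no fellow, in which case Lemma~\ref{fellowCorner}(iii) limits it to a single vertex of $X$ and thus to at most one direct charge; or it carries a (by Lemma~\ref{fellowCorner}(i), unique) fellow $c'$ and no vertex of $X$, so it contributes no direct charge, while the charges routed through $c'$ number at most one, because any two vertices routed through $c'$ would both have $I$-set $\{c'\}$ and hence coincide, contradicting that $C$ is identifying. Since any fellow routing a charge to $c$ must be adjacent to $c$, all routed charges on $c$ indeed pass through the three pipes at $c$. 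Either way each of the three pipes contributes at most one charge, so $c$ receives at most three charges, and as every $x\in X$ is charged exactly once this yields $|X|\le 3\,\#\{\text{corners}\}$.

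The last step, and the part I expect to be the most delicate, is to show that each corner is counted at least once in $\sum_{j=1}^3\sum_{i=1}^q k^j_i$, i.e.\ that every corner is a corner of at least one layer. For a corner $c$ let $a_k$ indicate whether the pipe through $c$ varying the $k$-th coordinate carries a codeword other than $c$. The layer $D^j$ through $c$ contains exactly the two pipes with $k\neq j$; the condition $c\notin Y^j$ says at least one of these two pipes is occupied, while $c$ being a corner of $D^j$ says \emph{both} are. If $c$ were a corner of no layer, then no two of $a_1,a_2,a_3$ could both equal $1$, so at most one $a_k$ is $1$; choosing $j$ to be an index whose complementary pair of pipes are both empty then forces $c\in Y^j$, contradicting that $c$ is a corner. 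Hence $c$ is a corner of some layer, so $\#\{\text{corners}\}\le \sum_{j=1}^3\sum_{i=1}^q k^j_i$, and the bound follows. The main obstacles are the fellow bookkeeping in the middle step—pinning down that vertices routed through a common fellow collapse to the single $I$-set $\{c'\}$—and this final parity-type argument guaranteeing that the layer-corner count dominates the raw corner count.
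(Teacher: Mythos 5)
Your proof is correct and follows essentially the same route as the paper's: each vertex of $X$ is assigned via Lemma~\ref{kulmatarve} to a corner (directly or through a fellow in its $I$-set), Lemma~\ref{fellowCorner} caps the charges at each corner at three via a pipe-by-pipe analysis, and the bound follows because every corner is a corner of some layer. You do spell out two points the paper leaves implicit, both correctly: that a vertex routed through a fellow $c'$ has $I$-set exactly $\{c'\}$, and the closing argument that every corner is counted at least once in $\sum_{j=1}^3\sum_{i=1}^q k^j_i$.
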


To approximate the cardinality of each $X^j_i$, we need the domination number of $K_q \square K_q$. Later, this result is used to approximate the number of vertices of $X$ in a layer. The following lemma is Exercise 1.12 in \cite{DomInGraphs}.
\begin{lemma}[\cite{DomInGraphs}]\label{dominationNumber}
For each positive integer $q$, we have $$\gamma(K_q\square K_q)=q.$$
\end{lemma}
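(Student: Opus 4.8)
The plan is to realize $K_q \square K_q$ concretely and then establish the two inequalities separately. Writing the vertex set as $\{(i,j) \mid 1 \le i,j \le q\}$, two vertices are adjacent exactly when they agree in one coordinate and differ in the other; hence the closed neighbourhood $N[(i,j)]$ consists of all vertices sharing the first coordinate $i$ (its \emph{row}) together with all vertices sharing the second coordinate $j$ (its \emph{column}). In particular, a code $D$ dominates a vertex $(i,j)$ if and only if $(i,j) \in D$, or $D$ meets row $i$, or $D$ meets column $j$.

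For the upper bound $\gamma(K_q\square K_q) \le q$, I would simply exhibit the diagonal $D = \{(i,i) \mid 1 \le i \le q\}$. Every vertex $(i,j)$ shares its row with the diagonal vertex $(i,i) \in D$ and therefore lies in $N[(i,i)]$, so $D$ is a dominating set of cardinality $q$.

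For the lower bound $\gamma(K_q\square K_q) \ge q$, let $D$ be any dominating set. Let $R$ denote the set of rows containing no vertex of $D$ and $L$ the set of columns containing no vertex of $D$. If both $R$ and $L$ were nonempty, I could pick $i \in R$ and $j \in L$; then $(i,j) \notin D$ and $(i,j)$ has no neighbour in $D$ (its entire row and its entire column are codeword-free), contradicting the assumption that $D$ is dominating. Hence $R = \emptyset$ or $L = \emptyset$, i.e. either every row contains a codeword or every column contains a codeword. In either case $D$ must contain at least one vertex in each of the $q$ lines of that family, so $|D| \ge q$. Combining the two bounds gives the stated equality.

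This argument is entirely elementary, and I do not anticipate a genuine obstacle. The only step requiring a little care is the lower bound, where the essential observation is that the forbidden configuration — an empty row crossing an empty column — yields an undominated vertex, and this forces one of the two families of lines (rows or columns) to be completely hit by $D$.
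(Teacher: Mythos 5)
Your proof is correct and complete. Note, however, that the paper does not actually prove Lemma~\ref{dominationNumber}: it cites the result as Exercise~1.12 of \cite{DomInGraphs}, so there is no in-paper argument to compare yours against, and your write-up supplies the missing elementary proof. Both halves of your argument are sound: the diagonal $\{(i,i) \mid 1 \le i \le q\}$ gives the upper bound, and for the lower bound the decisive observation is that a dominating set must meet every row or every column, since an uncovered row crossing an uncovered column produces an undominated vertex; hitting $q$ pairwise disjoint lines then forces at least $q$ codewords. It is worth pointing out that this exact characterization — a subset of $K_q\square K_q$ is dominating if and only if it intersects all rows or all columns — is precisely what the paper states and uses later in the proof of Lemma~\ref{f-k suhde}, so your lower-bound step is the same combinatorial fact, here packaged as a self-contained proof of the domination number rather than quoted from the literature.
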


\begin{definition}
Let $C$ be a code in $K^3_q$ and $M^j_i\subseteq D^j_i$ be a minimum dominating set of $D^j_i$ such that $C^j_i\subseteq M^j_i$. Then we denote $f^j_i=|M^j_i|-q$. Note that $q$ is the domination number of $K_q\square K_q$. Let us further denote $a^j_i=q-|C^j_i|$.
\end{definition}


Note that we need $q+f^j_i$ codewords to dominate the layer $D^j_i$. Hence, value $f^j_i$ can be understood as a measurement of how much the structure of the code within a layer increases the cardinality of $X^j_i$. Indeed, observe that if a non-codeword in the layer $D^j_i$ is not dominated by $C^j_i$, then it belongs to $X^j_i$ and there is a row and a column within the layer $D^j_i$ without codewords. Moreover, observe that we have only $q-a^j_i$ codewords in the layer $D^j_i$. Hence, there are at least $q+f^j_i-(q-a^j_i)=f^j_i+a^j_i$ rows and columns which do not have a codeword when $f^j_i+a^j_i\geq0$. Thus, we have $|X^j_i|\ge (a^j_i+f^j_i)^2$. The previous observations are illustrated in Figure \ref{X esim}. Furthermore, the number of corners in a layer is connected with $f^j_i$ as explained in the following lemma.

\begin{lemma}\label{f-k suhde}
Let $C$ be an identifying code in $K^3_q$. Then we have $2f^j_i\geq k^j_i $ for each $i$ and $j$.
\begin{proof}
Let $C$ be an identifying code in $K^3_q$, and consider a layer $D^j_i$ for some $1\leq i\leq q$ and $1\leq j\leq 3$. Since the vertices of $D^j_i$ can be viewed as a graph $K^2_q$, we can consider pipes locating within it as rows and columns. There are $q$ rows and $q$ columns in $D^j_i$ and a subset of $D^j_i$ is dominating if and only if it intersects with all rows or all columns. Indeed, if there are a row $R$ and a column $S$ which do not contain any codewords, then the vertex in their intersection is not dominated. Let us now assume that $C^j_i$ intersects with $n$ rows and $h$ columns ($n\geq h$) and that $M^j_i$ is a minimal dominating set of $D^j_i$ such that $C^j_i\subseteq M^j_i$.

We have $|M^j_i|=|C^j_i|+(q-n)$ since $C^j_i$ dominates $n$ out of $q$ rows and $n\geq h$. Thus, we have $$f^j_i=|M^j_i|-q=|C^j_i|-n.$$ If we have one or two corners of the layer $D^j_i$ in a row, then that row has at least two codewords in it and deleting a corner still preserves codeword in that row. If a row has $m\geq3$ corners in it, then it has at least $m$ codewords and hence, deleting $m-1$ corners still maintains a codeword in the row. Therefore, we may delete at least half of the corners in such a way that there still are codewords in $n$ rows. Hence, we have $n\leq |C^j_i|-\frac{1}{2}k^j_i$ and thus, we have $$f^j_i\geq \frac{1}{2}k^j_i.$$\end{proof}
\end{lemma}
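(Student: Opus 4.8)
The plan is to work entirely inside the layer $D^j_i$, which is a copy of $K_q\square K_q$; here the pipes of $K^3_q$ contained in the layer become the \emph{rows} and \emph{columns} of a $q\times q$ grid, and the codewords of the layer are exactly the vertices of $C^j_i$. The first step is to pin down $f^j_i$ in terms of how $C^j_i$ meets these rows and columns. A subset of $D^j_i$ dominates if and only if it meets every row or meets every column: if it misses some row $R$ and some column $S$, then the vertex at their intersection has no codeword in either of its pipes and is undominated, while meeting all rows (or all columns) clearly dominates. Hence, writing $n$ and $h$ for the numbers of rows and columns met by $C^j_i$ with $n\geq h$, a cheapest dominating extension of $C^j_i$ simply places one vertex in each of the $q-n$ empty rows, so $|M^j_i|=|C^j_i|+(q-n)$ and therefore $f^j_i=|C^j_i|-n$.

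Next I would unpack the geometry of a corner of the layer. If $c$ is such a corner, witnessed by $c',c''\in I(c)\cap D^j_i$ lying in distinct pipes, then $c',c''\in N(c)$ (neither can equal $c$, since a vertex and one of its neighbours always share a pipe). A short coordinate check shows that $c$ itself must lie in $D^j_i$: if $c$ shared with, say, $c'$ only the two coordinates that define $D^j_i$, then it could not be adjacent to $c''$ as well. Consequently $c',c''$ are neighbours of $c$ inside the grid lying in different rows and different columns, which forces one of them into the row of $c$ and the other into the column of $c$. The consequence I need is the clean statement that \emph{every corner of the layer is a codeword of $D^j_i$ whose row contains a second codeword.}

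The heart of the argument is then a row-by-row deletion. Grouping the corners by the row they occupy, let a given row carry $m$ of them. If $m\in\{1,2\}$, that row has at least two codewords, so I may delete one corner and still leave a codeword there, deleting $1\geq\tfrac{1}{2}m$ of its corners. If $m\geq 3$, the row already contains at least $m$ codewords (the corners themselves), so I delete $m-1$ of them and keep one, again deleting $m-1\geq\tfrac{1}{2}m$ corners. These deletions are independent across rows and never empty a previously occupied row, so summing over rows removes at least $\tfrac{1}{2}k^j_i$ codewords while the surviving codewords still meet all $n$ rows. Thus $n\leq |C^j_i|-\tfrac{1}{2}k^j_i$, and combining with $f^j_i=|C^j_i|-n$ yields $f^j_i\geq\tfrac{1}{2}k^j_i$, which is exactly $2f^j_i\geq k^j_i$.

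I expect the main obstacle to be the bookkeeping in the deletion step: one must confirm that at least half the corners can be removed \emph{simultaneously} without uncovering any of the $n$ occupied rows, which is precisely why the split into the cases $m\in\{1,2\}$ and $m\geq 3$ is needed (keeping one codeword per row while discarding the rest). The geometric lemma that a corner of the layer lies in the layer and adds a second codeword to its row is a necessary but routine preliminary.
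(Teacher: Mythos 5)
Your proposal is correct and takes essentially the same route as the paper's own proof: the same characterization of dominating sets in the layer grid, the same identity $f^j_i = |C^j_i| - n$, and the same row-by-row corner-deletion argument with the case split $m \in \{1,2\}$ versus $m \geq 3$. The only difference is that you spell out the (correct, and in the paper implicit) geometric preliminary that a corner of the layer $D^j_i$ must itself lie in $D^j_i$ and forces a second codeword into its row.
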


From Lemmas \ref{kulmavaikute} and \ref{f-k suhde} we get following corollary.

\begin{corollary}\label{f-X suhde}
If $C$ is an identifying code in $K^3_q$, then we have $$|X| \leq 6\sum_{j=1}^3\sum_{i=1}^q f^j_i.$$
\end{corollary}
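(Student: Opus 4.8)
The plan is to simply chain together the two inequalities already established. Lemma~\ref{kulmavaikute} gives a global bound on $|X|$ in terms of the total count of corners across all layers, namely $|X|\leq 3\sum_{j=1}^3\sum_{i=1}^q k^j_i$, while Lemma~\ref{f-k suhde} gives a layerwise comparison $k^j_i\leq 2f^j_i$ valid for every pair of indices $i$ and $j$. Since the latter estimate holds termwise, I would substitute it into each summand of the double sum appearing in the former bound.

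Concretely, first I would invoke Lemma~\ref{kulmavaikute} to write $|X|\leq 3\sum_{j=1}^3\sum_{i=1}^q k^j_i$. Then, applying the inequality $k^j_i\leq 2f^j_i$ of Lemma~\ref{f-k suhde} to each term indexed by $(i,j)$ and summing, I obtain
\[
|X|\leq 3\sum_{j=1}^3\sum_{i=1}^q k^j_i \leq 3\sum_{j=1}^3\sum_{i=1}^q 2f^j_i = 6\sum_{j=1}^3\sum_{i=1}^q f^j_i \text{,}
\]
which is exactly the claimed bound.

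There is essentially no obstacle here: the corollary is a one-line consequence of combining the two preceding lemmas. The only point worth noting is that the substitution is legitimate precisely because Lemma~\ref{f-k suhde} is a termwise (per-layer) inequality rather than an aggregate one, so it may be inserted inside the double summation without any further argument. All the genuine work—bounding $|X|$ by corners, and bounding corners by the domination-defect quantities $f^j_i$—has already been carried out in Lemmas~\ref{kulmavaikute} and~\ref{f-k suhde}.
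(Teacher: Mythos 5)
Your proof is correct and follows exactly the paper's route: the paper derives this corollary directly by chaining Lemma~\ref{kulmavaikute} with the termwise bound $k^j_i \leq 2f^j_i$ of Lemma~\ref{f-k suhde}, just as you do.
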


Now we can proof the new lower bound for $\gamma^{ID}(K^3_q)$. The proof is based on the idea that each vertex in $X$ requires corners, corners increases the values $f^j_i$ and the values $f^j_i$ increase the cardinality of $X$. Figure \ref{X esim} shows how corners and the number of codewords affect in the size of $X$.

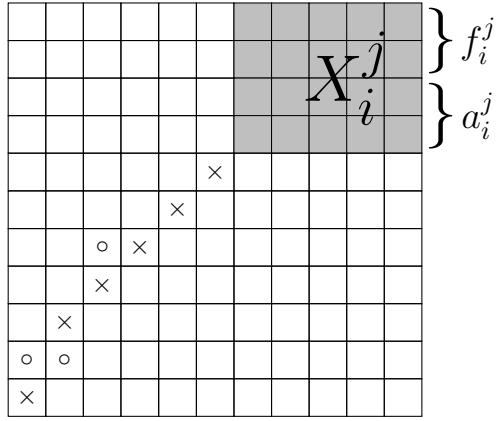
\begin{figure}
\centering
\begin{tikzpicture}
    \matrix[square matrix]
    {
    $ $ & $ $ & $ $ & $ $ & $ $ & $ $ & |[fill=lightgray]|$ $ & |[fill=lightgray]|$ $ & |[fill=lightgray]|$ $ & |[fill=lightgray]|$ $ & |[fill=lightgray]|$ $ \\
    $ $ & $ $ & $ $ & $ $ & $ $ & $ $ & |[fill=lightgray]|$ $ & |[fill=lightgray]|$ $ & |[fill=lightgray]|$ $ & |[fill=lightgray]|$ $ & |[fill=lightgray]|$ $ \\
    $ $ & $ $ & $ $ & $ $ & $ $ & $ $ & |[fill=lightgray]|$ $ & |[fill=lightgray]|$ $ & |[fill=lightgray]|$ $ & |[fill=lightgray]|$ $ & |[fill=lightgray]|$ $ \\
    $ $ & $ $ & $ $ & $ $ & $ $ & $ $ & |[fill=lightgray]|$ $ & |[fill=lightgray]|$ $ & |[fill=lightgray]|$ $ & |[fill=lightgray]|$ $ & |[fill=lightgray]|$ $ \\
    $ $ & $ $ & $ $ & $ $ & $ $ & $\times $ & $ $ & $ $ & $ $ & $ $ & $ $ \\
    $ $ & $ $ & $ $ & $ $ & $\times $ & $ $ & $ $ & $ $ & $ $ & $ $ & $ $  \\
    $ $ & $ $ & $\circ $ & $\times $ & $ $ & $ $ & $ $ & $ $ & $ $ & $ $ & $ $  \\
    $ $ & $ $ & $\times $ & $ $ & $ $ & $ $ & $ $ & $ $ & $ $ & $ $ & $ $ \\
    $ $ & $\times $ & $ $ & $ $ & $ $ & $ $ & $ $ & $ $ & $ $ & $ $ & $ $  \\
    $\circ $ & $\circ $ & $ $ & $ $ & $ $ & $ $ & $ $ & $ $ & $ $ & $ $ & $ $ \\
    $\times $ & $ $ & $ $ & $ $ & $ $ & $ $ & $ $ & $ $ & $ $ & $ $ & $ $  \\
    };

\node[] at (3,1.25) { \Huge $\}$};
\node[] at (3.5,1.25) { \Large $a^j_i$};
\node[] at (3.0,2.25) { \Huge $\}$};
\node[] at (3.5,2.25) { \Large $f^j_i$};
\node[] at (1.75,1.75) { \Huge $X^j_i$};

\end{tikzpicture}
\caption{A code within a layer of $K^3_{12}$ with $3$ corners marked by $\circ$, $7$ other codewords marked by $\times$, $f=2$ and $|X|=20$.}\label{X esim}
\end{figure}
\begin{theorem}

We have $$\gamma^{ID}(K_q^3)\geq q^2-\frac{3}{2} q.$$


\begin{proof}
Let $C$ be an identifying code in $K_q^3$ of an optimal size $\gamma^{ID}(K^3_q)$. 
 We have, \begin{equation}\label{C=q2-a}
|C|=\frac{1}{3}\sum_{j=1}^3\sum_{i=1}^q |C^j_i|=\frac{1}{3}\sum_{j=1}^3\sum_{i=1}^q(q-a^j_i)= q^2-\frac{1}{3}\sum_{j=1}^3\sum_{i=1}^qa^j_i.
\end{equation} Since there exists an identifying code with size $q^2$ by  \cite{GWIDcpg}, we can assume that $\frac{1}{3}\sum_{j=1}^3\sum_{i=1}^qa^j_i\geq 0$. Let $M^j_i$ be a minimum dominating set in $D^j_i$ such that $C^j_i\subseteq M^j_i$. Notice that $f^j_i\geq 0$ and $a^j_i$ can be negative but $a^j_i\geq -f^j_i$ since  $f^j_i=|M^j_i|-q\geq |C^j_i|-q=-a^j_i$. Hence, we have $f^j_i+a^j_i\geq0$.


%

 Now we can give an approximation for $X$: \begin{equation}\label{x>a+f2} |X|\geq\sum_{j=1}^3\sum_{i=1}^q(a^j_i+f^j_i)^2.
\end{equation}
We can do this approximation since there are at least $|M^j_i|-|C^j_i|$ rows and columns without codewords in $D^j_i$. Hence, we have $|X^j_i|\geq \left(|M^j_i|-|C^j_i|\right)^2=\left((q+f^j_i)-(q-a^j_i)\right)^2=\left(f^j_i+a^j_i\right)^2$. 
Furthermore, by Corollary \ref{f-X suhde}, we have \begin{equation}\label{X raja}
6\sum_{j=1}^3\sum_{i=1}^q f^j_i\geq |X|\geq\sum_{j=1}^3\sum_{i=1}^q(a^j_i+f^j_i)^2.
\end{equation}

 Now we can give a lower bound for $|C|$:


\begin{equation*}
\begin{aligned}
|C|& \overset{(\ref{C=q2-a})}{=}  q^2-\frac{1}{3}\sum_{j=1}^3\sum_{i=1}^q a_i^j\\
&=q^2-\frac{1}{3}\left(\sum_{j=1}^3\sum_{i=1}^q a_i^j+\sum_{j=1}^3\sum_{i=1}^qf^j_i-\sum_{j=1}^3\sum_{i=1}^q f_i^j\right)\\
&=q^2-\frac{1}{3}\left(\sum_{j=1}^3\sum_{i=1}^q \left(a_i^j+f^j_i\right)-\sum_{j=1}^3\sum_{i=1}^q f_i^j\right)\\
 & \overset{(\ref{X raja})}{\geq} q^2-\frac{1}{3}\left(\sum_{j=1}^3\sum_{i=1}^q \left(a_i^j+f^j_i\right)-\frac{\sum_{j=1}^3\sum_{i=1}^q \left(a_i^j+f^j_i\right)^2}{6}\right)\\
 & \overset{(*)}{\geq} q^2-\frac{1}{3}\left(3qA-\frac{3qA^2}{6}\right) \\
 & = q^2-q\left(A-\frac{A^2}{6}\right)\\
 & \overset{(**)}{\geq} q^2-\frac{3}{2}q.
\end{aligned}
\end{equation*}

We get the inequality $(*)$ by using Lagrange's method. We can minimize the value of sum $\sum_{j=1}^3\sum_{i=1}^q \left(a_i^j+f^j_i\right)^2$ while retaining the value of sum  $\sum_{j=1}^3\sum_{i=1}^q \left(a_i^j+f^j_i\right)$. The minimum of sum $\sum_{j=1}^3\sum_{i=1}^q \left(a_i^j+f^j_i\right)^2$ equals to $3qA^2$ where $A=\frac{\sum_{j=1}^3\sum_{i=1}^q \left(a_i^j+f^j_i\right)}{3q}$ that is the average value of $a^j_i+f^j_i$. For inequality $(**)$ we notice that $A-\frac{A^2}{6}$ gains its maximum value at $A=3$.\end{proof}
\end{theorem}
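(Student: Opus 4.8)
The plan is to convert the global counting identity for $|C|$ into a lower bound by leveraging the two opposing effects that the quantities $a^j_i$ and $f^j_i$ exert on the code. First I would record the exact identity \eqref{C=q2-a}, namely $|C| = q^2 - \frac13\sum_{j,i} a^j_i$, which reduces the problem to bounding $\sum_{j,i} a^j_i$ from above. Since an identifying code of size $q^2$ is known to exist by \cite{GWIDcpg}, I may assume $|C|\le q^2$, so that $\sum_{j,i} a^j_i \ge 0$; this licenses working with the nonnegative averaged quantity rather than worrying about degenerate cases. The central structural inputs are the two inequalities that relate $X$, the $a^j_i$ and the $f^j_i$: on one hand each layer $D^j_i$ with a deficient dominating set forces many rows and columns without codewords, giving the quadratic lower bound $|X^j_i|\ge (a^j_i + f^j_i)^2$ and hence \eqref{x>a+f2}; on the other hand Corollary~\ref{f-X suhde} supplies the linear upper bound $|X|\le 6\sum_{j,i} f^j_i$. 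Chaining these yields \eqref{X raja}, the key squeeze $6\sum_{j,i} f^j_i \ge \sum_{j,i}(a^j_i+f^j_i)^2$.

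Next I would perform the algebraic rewriting of $|C|$ so that the term $\sum a^j_i$ is expressed as $\sum(a^j_i+f^j_i) - \sum f^j_i$. This is the decisive move: it introduces the combined variable $s^j_i := a^j_i + f^j_i$, which is exactly the quantity controlled on both sides of \eqref{X raja}, while isolating the stray $\sum f^j_i$ term that can then be absorbed using the squeeze. Substituting the bound $\sum f^j_i \ge \frac16\sum(a^j_i+f^j_i)^2$ into the subtracted term produces the intermediate estimate
\[
|C| \ge q^2 - \frac13\left(\sum_{j=1}^3\sum_{i=1}^q s^j_i - \frac{\sum_{j=1}^3\sum_{i=1}^q (s^j_i)^2}{6}\right),
\]
where every $s^j_i = a^j_i+f^j_i \ge 0$ by the observation that $a^j_i \ge -f^j_i$.

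The main obstacle, and the step requiring the most care, is the optimization labelled $(*)$: I must bound $\sum s^j_i - \frac16\sum (s^j_i)^2$ from above over all admissible nonnegative tuples $(s^j_i)$, which is equivalent to minimizing $\sum(s^j_i)^2$ for a fixed value of $\sum s^j_i$. The point is that, by convexity (or equivalently by the power-mean / Cauchy--Schwarz inequality, which is what Lagrange multipliers recover here), the sum of squares is minimized when all $3q$ terms are equal, giving $\sum(s^j_i)^2 \ge 3q A^2$ where $A = \frac{1}{3q}\sum_{j,i} s^j_i$ is the common average. Substituting this and writing the bracketed expression as $3qA - \frac{3qA^2}{6} = 3q(A - \tfrac{A^2}{6})$ turns the estimate into $|C| \ge q^2 - q\bigl(A - \tfrac{A^2}{6}\bigr)$. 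The final step $(**)$ is then a one-variable calculus exercise: the quadratic $A - \frac{A^2}{6}$ in $A$ attains its maximum at $A = 3$, where its value is $\frac32$, so the subtracted term never exceeds $\frac32 q$, yielding the claimed bound $\gamma^{ID}(K_q^3) \ge q^2 - \frac32 q$. I would emphasize that the direction of the convexity inequality is exactly what is needed here—minimizing the sum of squares makes the subtracted correction as small as possible, hence the overall lower bound on $|C|$ as weak (but correct) as the argument permits—so the only genuine subtlety is confirming that the feasible region of the $s^j_i$ is unconstrained enough (apart from nonnegativity) for the equal-distribution minimizer to be admissible.
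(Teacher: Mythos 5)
Your proposal is correct and takes essentially the same approach as the paper: the same identity $|C| = q^2 - \frac{1}{3}\sum_{j,i} a^j_i$, the same squeeze $6\sum_{j,i} f^j_i \geq |X| \geq \sum_{j,i}(a^j_i+f^j_i)^2$, the same substitution $s^j_i = a^j_i + f^j_i$, and the same two-step optimization ending with the maximum of $A - A^2/6$ at $A=3$. The only cosmetic difference is that you justify the sum-of-squares step by convexity/Cauchy--Schwarz where the paper invokes Lagrange's method; these are the same mathematical content, and since the inequality $\sum_{j,i}(s^j_i)^2 \geq 3qA^2$ holds unconditionally for all real tuples, the admissibility concern in your final sentence is automatically moot.
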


\section{Results in $K^n_q$ when $n > 3$} \label{SectionIDncopies}

In this section, we consider identifying, self-identifying and self-locating-dominating codes in  $K_q^n$, when $n > 3$ and $q > 2$.  Goddard and Wash \cite{GWIDcpg}  showed that $\gamma^{ID}(K_q^n)\le q^{n-1}$. In what follows, we first give optimal self-identifying and self-locating-dominating codes in $K_q^n$ for certain values of $n$ and $q$. Then based on these codes we are able to significantly improve the bound $\gamma^{ID}(K_q^n)\le q^{n-1}$ when $q$ is a \emph{prime power}.

For later use, we first begin by introducing some notation and preliminary results based on the classical book~\cite{vanLint} of coding theory. For the rest of the section, we assume that $q$ is a prime power. Then there exists a finite field with $q$ elements, and we denote this field by $\F_q$. The set of all $n$-tuples of $\F_q$ forms a vector space $\F_q^n$. The vector space $\F_q^n$ can be considered as a graph by defining two vertices of $\F_q^n$ to be adjacent if they differ in exactly one coordinate. This graph is called the $q$-ary hypercube or the $q$-ary Hamming space. A vertex of a $q$-ary Hamming space is called a \emph{word}. For two words $u$ and $v$ of $\F_q^n$, the \emph{Hamming distance} is defined as the usual (geodesic) distance $d(u,v)$ of the graph, i.e., the (Hamming) distance is the number of coordinate places in which $u$ and $v$ differ. It is easy to see that the $q$-ary hypercube is isomorphic to the Cartesian product of $n$ copies of $K_q$. Denote the all-zero word of $\F_q^n$ by $\nolla$. A word with one at the $i$th coordinate place and zero in other coordinates is denoted by $e_i$.

A linear subspace of $\F_q^n$ is called a $q$\emph{-ary linear code}. Let $C$ be a linear code in $\F_q^n$ with dimension $d$. Then there exists an $(n-d) \times n$ matrix $H$ such that $Hu^T = \nolla$ if and only if $u \in C$. Now $H$ is called the \emph{parity check matrix} of $C$. Observe that an equivalence relation in $\F_q^n \setminus \{ \nolla \}$ is obtained by defining for all $u,v \in \F_q^n \setminus \{ \nolla \}$, $u \sim v$ if and only if $u = \lambda v$ for some $\lambda \in \F_q^* = \F_q \setminus \{0\}$. Now each equivalence class consists of $q-1$ words of $\F_q^n \setminus \{ \nolla \}$. Assuming $k$ is a positive integer, we form a $k \times (q^k-1)/(q-1)$ matrix $H$ over $\F_q$ by taking as its columns one element from each equivalence class of $\F_q^k \setminus \{ \nolla \}$. Concerning $H$ as a parity check matrix, we obtain a linear code $C$ of length $n$ and dimension $n-k$ such that $|I(C;u)| = 1$ for all $u \in \F_q^n$, i.e., the Hamming distance between any two codewords of $C$ is at least three. The linear code $C$ is called the \emph{Hamming code} of length $n$ and it consists of $q^{n-k}$ codewords.

Let us first begin by presenting a lemma which proves useful in later discussions.
\begin{lemma} \label{LemmaHammingIntersection}
Let $C$ be a code in $\F^n_q$. 
\begin{itemize}
\item[(i)] For two distinct codewords $c_1$ and $c_2$ of $C$, we have
    \[
    |N[c_1] \cap N[c_2]| =
    \left\{
      \begin{array}{ll}
        q, & \hbox{if $d(c_1,c_2) = 1$;} \\
        2, & \hbox{if $d(c_1,c_2) = 2$;} \\
        0, & \hbox{if $d(c_1,c_2) > 2$.}
      \end{array}
    \right.
    \]
\item[(ii)] For three distinct codewords $c_1$, $c_2$ and $c_3$ of $C$ such that there exists a pair of them with the distance equal to $2$ and there exists $u \in \F_q^n$ satisfying $c_1, c_2, c_3 \in N[u]$, we obtain that $N[c_1] \cap N[c_2] \cap N[c_3] = \{u\}$.
\end{itemize}
\end{lemma}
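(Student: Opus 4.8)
The plan is to use that in the $q$-ary Hamming space the closed neighbourhood $N[v]$ is precisely the ball of radius one, so that $N[c_1]\cap N[c_2]=\{w\in\F^n_q : d(w,c_1)\le 1 \text{ and } d(w,c_2)\le 1\}$, and similarly for triple intersections. For part (i) I would split according to $d:=d(c_1,c_2)$. When $d>2$ the triangle inequality $d(c_1,c_2)\le d(c_1,w)+d(w,c_2)$ rules out any common element, so the intersection is empty. When $d\in\{1,2\}$, let $S$ be the set of the $d$ coordinates on which $c_1$ and $c_2$ differ; for a common neighbour $w$ the sets $A=\{i:w_i\ne(c_1)_i\}$ and $B=\{i:w_i\ne(c_2)_i\}$ have size at most one, they coincide outside $S$, and together they must cover $S$. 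A short bookkeeping then forces $w$ to agree with $c_1$ and $c_2$ off $S$, which for $d=1$ leaves exactly the $q$ words obtained by freely choosing the entry on the single differing coordinate, and for $d=2$ leaves exactly the two mixed corners that take the $c_1$-entry on one coordinate of $S$ and the $c_2$-entry on the other. I would record, for later use, the by-product that these two common neighbours of a distance-$2$ pair are themselves at distance $2$.

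For part (ii) the main idea is to apply (i) twice, in both directions. Relabel so that $d(c_1,c_2)=2$. The hypothesis gives $u\in N[c_1]\cap N[c_2]\cap N[c_3]$. Suppose $w$ is another element of this triple intersection. Then $w\in N[c_1]\cap N[c_2]$, so by (i) we have $N[c_1]\cap N[c_2]=\{u,w\}$ and, by the by-product above, $d(u,w)=2$. Since $u,w\in N[c_1]\cap N[c_2]$, the symmetry of adjacency gives $c_1,c_2\in N[u]\cap N[w]$; applying (i) now to the distance-$2$ pair $u,w$ yields $N[u]\cap N[w]=\{c_1,c_2\}$. But $c_3\in N[u]\cap N[w]$ because $u,w\in N[c_3]$, whence $c_3\in\{c_1,c_2\}$, contradicting the distinctness of the three codewords. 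Therefore no such $w$ exists and $N[c_1]\cap N[c_2]\cap N[c_3]=\{u\}$.

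The one point that needs care is that the second application of (i) is to the words $u$ and $w$, which need not lie in the code $C$; this is harmless because part (i) is a purely metric statement about $\F^n_q$ (equivalently, one may apply it with the whole space taken as the code). Conceptually, the four words $c_1,c_2,u,w$ form a $4$-cycle in the Hamming graph — each of $u,w$ is adjacent to each of $c_1,c_2$ while $d(c_1,c_2)=d(u,w)=2$ — and the crux of the argument is this self-dual configuration, which lets part (i) pin down both common neighbours of $u,w$ as exactly $c_1,c_2$. I expect the only genuine obstacle to be handling the case $d=2$ of (i) carefully enough to be certain that the two common neighbours are distinct and at distance $2$; once that is in hand, part (ii) follows formally.
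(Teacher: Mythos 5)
Your proof is correct, and while part (i) follows essentially the same route as the paper (coordinate bookkeeping, with the distance-$2$ case yielding the two ``mixed'' words, plus the recorded by-product that these two common neighbours are themselves at distance $2$), your part (ii) is a genuinely different argument. The paper proves (ii) by explicit computation: writing $c_1 = u + \lambda_1 e_1$, $c_2 = u + \lambda_2 e_2$, it identifies the second common neighbour as $v = u + \lambda_1 e_1 + \lambda_2 e_2$ and then checks $d(c_3,v) \geq 2$ by a short case analysis on the form of $c_3$ (namely $c_3 = u$ or $c_3 = u + \lambda'_i e_i$ with $c_3 \neq c_1, c_2$). You instead argue by contradiction and apply (i) a second time, in the reverse direction, to the pair $\{u,w\}$ of common neighbours: since $d(u,w)=2$, part (i) forces $N[u] \cap N[w] = \{c_1,c_2\}$, and then $c_3 \in N[u] \cap N[w]$ contradicts the distinctness of the three codewords. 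Your route is coordinate-free once (i) is available, exploits the self-dual $4$-cycle structure of the configuration, and handles the subcase $c_3 = u$ without any separate treatment; the paper's route is more concrete and pins down exactly which word is excluded from the triple intersection. You also correctly flagged the one point needing care --- that (i) is applied to the words $u,w$, which need not be codewords --- and your resolution is sound, since the lemma holds for an arbitrary code $C$ and is purely a metric statement about $\F_q^n$.
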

\begin{proof}
(i) Let $c_1, c_2 \in C$ be such that $c_1 \neq c_2$. If $d(c_1,c_2) > 2$, then it is immediate that $N[c_1] \cap N[c_2] = \emptyset$. Furthermore, if $d(c_1, c_2) = 1$, then $c_1$ and $c_2$ differ in exactly one coordinate place and, hence, the intersection $N[c_1] \cap N[c_2]$ consists of all the words having same values in the rest of the $n-1$ coordinate. Therefore, we have $|N[c_1] \cap N[c_2]| = q$. Finally, suppose that $d(c_1, c_2) = 2$. Without loss of generality, we may assume that $c_1 = c_2 + \lambda_1 e_1 + \lambda_2 e_2$ for some $\lambda_i \in \F_q (i=1,2)$. Hence, we have $N[c_1] \cap N[c_2] = \{c_1 + \lambda_1 e_1, c_1 + \lambda_2 e_2\}$ and $|N[c_1] \cap N[c_2]| = 2$.

(ii) Let $c_1$, $c_2$ and $c_3$ be distinct codewords of $C$ such that the distance between two of them is equal to two and there exists $u \in \F_q^n$ satisfying $c_1, c_2, c_3 \in N[u]$. Without loss of generality, we may assume that $d(c_1,c_2)=2$. By the first case, we obtain that $N[c_1] \cap N[c_2] = \{u, v\}$ and $d(u,v) = 2$ for some $v \in \F_q^n$. Now, without loss of generality, we may assume that $c_1 = u + \lambda_1e_1$ and $c_2 = u + \lambda_2e_2$, where $\lambda_1, \lambda_2 \in \F_q$. Therefore, we have $v = u + \lambda_1e_1 + \lambda_2e_2$. Hence, we are immediately done if $u = c_3$ as $d(c_3,v) \geq 2$. Otherwise, $c_3 = u + \lambda'_ie_i$ $(\lambda'_i \in \F_q)$ with $c_3$ being distinct from $c_1$ and $c_2$ and the claim follows as $d(c_3,v) \geq 2$. Thus, in all cases, we obtain that $N[c_1] \cap N[c_2] \cap N[c_3] = \{u\}$.
\end{proof}

In what follows, we introduce an approach to construct identifying codes based on self-identifying codes in $\F_q^n$. We first begin by presenting a characterization for self-identifying codes in $\F_q^n$.
\begin{theorem} \label{ThmHammingSIDChar}
A code $C$ is self-identifying in $\F^n_q$ if and only if for each word $u \in \F^n_q$ we have $|I(C;u)| \geq 3$ and there exist $c_1, c_2 \in I(C;u)$ such that $d(c_1,c_2)=2$.
\end{theorem}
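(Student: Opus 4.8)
The plan is to reduce everything to the third characterization of self-identification in Theorem~\ref{ThmSIDChar}, namely that $C$ is self-identifying if and only if $I(C;u)\neq\emptyset$ and $\bigcap_{c\in I(C;u)}N[c]=\{u\}$ for every word $u$, and then to feed this through the intersection computations of Lemma~\ref{LemmaHammingIntersection}.

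For the ``if'' direction, I would assume that every $u$ satisfies $|I(C;u)|\geq 3$ and carries a pair $c_1,c_2\in I(C;u)$ with $d(c_1,c_2)=2$. Choosing any third codeword $c_3\in I(C;u)$ (available since $|I(C;u)|\geq 3$), I would apply Lemma~\ref{LemmaHammingIntersection}(ii) to the triple $c_1,c_2,c_3$; its hypotheses hold because $u\in N[c_1]\cap N[c_2]\cap N[c_3]$ and $d(c_1,c_2)=2$, so the conclusion gives $N[c_1]\cap N[c_2]\cap N[c_3]=\{u\}$. Since $\{u\}\subseteq\bigcap_{c\in I(C;u)}N[c]\subseteq N[c_1]\cap N[c_2]\cap N[c_3]=\{u\}$, the full intersection equals $\{u\}$, and Theorem~\ref{ThmSIDChar}(iii) closes this direction.

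For the ``only if'' direction, I would assume $C$ is self-identifying, so $\bigcap_{c\in I(C;u)}N[c]=\{u\}$ for each $u$, and establish the two assertions in turn. To get $|I(C;u)|\geq 3$, I would rule out the small cases: if $|I(C;u)|=1$ the intersection is a single closed neighbourhood of size $n(q-1)+1>1$, while if $|I(C;u)|=2$ then Lemma~\ref{LemmaHammingIntersection}(i) forces the intersection to have size $q$, $2$, or $0$, none equal to $1$ when $q>2$; either case contradicts $\bigcap=\{u\}$. For the distance-$2$ pair, the key observation is that every codeword of $I(C;u)$ lies in $N[u]$, so any two distinct ones are at distance $1$ or $2$; if no pair were at distance $2$, then all of them would differ from $u$ in one common coordinate direction and hence sit in a single pipe $R$ through $u$. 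But $R\subseteq N[c]$ for each such $c$, whence $R\subseteq\bigcap_{c\in I(C;u)}N[c]$ and the intersection would have size at least $q>1$, contradicting $\bigcap=\{u\}$. Thus a distance-$2$ pair must exist.

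The main obstacle is exactly this last geometric step: converting ``no two codewords are at distance $2$'' into the structural fact that the codewords collapse onto a single pipe, together with the observation that such a pipe is engulfed in its entirety by every closed neighbourhood of its members. Once that pipe-collapse phenomenon is isolated, both directions are routine bookkeeping with Lemma~\ref{LemmaHammingIntersection} and Theorem~\ref{ThmSIDChar}(iii).
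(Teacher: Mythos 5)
Your proposal is correct and follows essentially the same route as the paper: both directions reduce to Theorem~\ref{ThmSIDChar}(iii) via Lemma~\ref{LemmaHammingIntersection}, with the ``only if'' direction ruling out $|I(C;u)|\leq 2$ and then using the observation that pairwise-distance-one codewords collapse into a single pipe whose $q$ vertices all lie in the intersection, and the ``if'' direction applying Lemma~\ref{LemmaHammingIntersection}(ii) to a distance-two pair plus a third codeword. Your write-up is in fact slightly more explicit than the paper's (e.g.\ the containment chain $\{u\}\subseteq\bigcap_{c\in I(C;u)}N[c]\subseteq N[c_1]\cap N[c_2]\cap N[c_3]$ and the pipe-engulfment step), but the underlying argument is identical.
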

\begin{proof}
Assume that $C$ is a self-identifying code in $\F^n_q$. Suppose first that there exists $u \in \F^n_q$ such that $I(C;u) = \{c_1, c_2\}$, where $c_1, c_2 \in C$. By Lemma~\ref{LemmaHammingIntersection}, we obtain that the intersection $N[c_1] \cap N[c_2]$ contains at least two vertices. This contradicts with the characterization of Theorem~\ref{ThmSIDChar}. Similarly, there does not exist a word $u \in \F^n_q$ which is covered by exactly one codeword of $C$. Hence, we may assume that $I(C;u)$ contains at least $3$ codewords. Suppose then that there does not exist a pair of codewords $c$ and $c'$ in $I(C;u)$ such that $d(c,c')=2$. Hence, the pairwise distance of any two codewords in $I(C;u)$ is one, i.e., the codewords differ in only one coordinate. This implies that the intersection
\[
\bigcap_{c \in I(C;u)} N[c]
\]
contains $q$ words contradicting with the assumption that $C$ is a self-identifying code. Thus, the claim follows.

Assume then that $C \subseteq \F_q^n$ is a code such that for any $u \in \F_q^n$ we have $|I(C;u)| \geq 3$ and there exist $c_1, c_2 \in I(C;u)$ with $d(c_1,c_2)=2$. By Lemma~\ref{LemmaHammingIntersection}(ii), we immediately obtain that for any $v \in \F_q^n$ we have
\[
\bigcap_{c \in I(C;v)} N[c] = \{v \} \text{.}
\]
Thus, $C$ is a self-locating-dominating code in $\F_q^n$.
\end{proof}

For the next theorem, we recall the following notation: for any word $u \in \F_q^n$ and code $C \subseteq \F_q^n$,
\[
u + C = \{u + c \mid c \in C \} \text{.}
\]
In the following theorem, we present an infinite family of optimal self-identifying codes in $\F_q^n$.
\begin{theorem} \label{ThmHammingSIDOptimalConstruction}
Let $q$ be a prime power and let $n$ and $k$ be integers such that $n = (q^k-1)/(q-1)$. If $C$ is a Hamming code in $\F^n_q$, then $C \cup (e_1 + C) \cup (e_2+C)$ is an optimal self-identifying code in $\F^n_q$ with cardinality $3q^{n-k}$.
\end{theorem}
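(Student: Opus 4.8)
The plan is to verify three things: that $D := C \cup (e_1+C) \cup (e_2+C)$ has cardinality $3q^{n-k}$, that it is self-identifying via the characterization in Theorem~\ref{ThmHammingSIDChar}, and that no self-identifying code in $\F_q^n$ can be smaller. For the cardinality I would first observe that a Hamming code has minimum distance $3$, so $C$ contains no nonzero word of weight at most $2$; in particular $e_1,e_2\notin C$ and $e_1-e_2\notin C$ (the latter having weight $2$). Since cosets of the subgroup $C$ are either equal or disjoint, the three translates $C$, $e_1+C$, $e_2+C$ are pairwise distinct and disjoint, whence $|D|=3|C|=3q^{n-k}$.

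Next, to apply Theorem~\ref{ThmHammingSIDChar} I would compute $|I(D;u)|$ for each $u\in\F_q^n$. Since the Hamming code satisfies $|I(C;u')|=1$ for every word $u'$ (as recalled above), applying this to $u$, $u-e_1$ and $u-e_2$ shows that each of the three disjoint cosets contributes exactly one codeword to $N[u]$; hence $|I(D;u)|=3$, say $I(D;u)=\{c_0,c_1,c_2\}$ with $c_0\in C$, $c_1\in e_1+C$, $c_2\in e_2+C$. As these lie in distinct cosets they are pairwise distinct, and each is within distance $1$ of $u$, so every pairwise distance lies in $\{1,2\}$ by the triangle inequality. It remains to produce a pair at distance exactly $2$.

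The main obstacle is ruling out the case where all three pairwise distances equal $1$, and this is where the coset structure does the work. If $d(c_0,c_1)=d(c_0,c_2)=1$ but the two differing coordinates were distinct, then $c_1$ and $c_2$ would differ in two coordinates, giving $d(c_1,c_2)=2$ and contradicting the assumption. Hence $c_1$ and $c_2$ differ from $c_0$ in the same single coordinate $i$, so $c_1=c_0+\mu e_i$ and $c_2=c_0+\nu e_i$ for some nonzero $\mu,\nu$. Combining the coset memberships $c_1-e_1\in C$ and $c_2-e_2\in C$ with $c_0\in C$ yields $\mu e_i-e_1\in C$ and $\nu e_i-e_2\in C$. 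Since $1\neq 2$, at least one of $i\neq 1$ or $i\neq 2$ holds; the corresponding word then carries nonzero entries in exactly two coordinates and so has weight $2$, contradicting the minimum distance $3$. Therefore some pair among $c_0,c_1,c_2$ is at distance $2$, and Theorem~\ref{ThmHammingSIDChar} gives that $D$ is self-identifying.

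Finally, for optimality I would use a double-counting bound. For any self-identifying code $D'$, Theorem~\ref{ThmHammingSIDChar} gives $|I(D';u)|\geq 3$ for all $u$, so counting incidences $(u,c)$ with $c\in N[u]\cap D'$ in two ways gives
\[
3q^n \le \sum_{u\in\F_q^n}|I(D';u)| = \sum_{c\in D'}|N[c]| = |D'|\,(n(q-1)+1) = |D'|\,q^k,
\]
where the last equality uses $n(q-1)+1=q^k$, which follows from $n=(q^k-1)/(q-1)$. Hence $|D'|\geq 3q^{n-k}$, and since $D$ attains this value it is optimal, giving $\gamma^{SID}(\F_q^n)=3q^{n-k}$. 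I expect the cardinality and counting steps to be routine, with the genuine difficulty concentrated in the weight-$2$ argument of the third paragraph.
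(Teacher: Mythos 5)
Your proof is correct and follows essentially the same route as the paper: the same coset decomposition giving $|I(u)|=3$ for every word, the same contradiction argument ruling out three pairwise distances equal to one via the minimum distance $3$ of the Hamming code, and the same double-counting bound $(n(q-1)+1)|D'| \geq 3q^n$ for optimality. Your weight-$2$ case analysis merely spells out a step the paper asserts tersely (that $d(c_1,c_2)=1$ forces $c_2 = c_1 + e_1$), so there is no substantive difference.
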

\begin{proof}
Let $C$ be a Hamming code in $\F^n_q$ and denote the code $C \cup (e_1 + C) \cup (e_2+C)$ by $C'$. Since $e_1$, $e_2$ and $e_2-e_1$ do not belong to $C$, the code $C'$ is formed by the Hamming code $C$ and two of its distinct cosets. Hence, it is immediate that each word of $\F_q^n$ is covered by exactly three codewords. Therefore, the claim clearly follows if for all $u \in \F_q^n$ there exist $c_1, c_2 \in I(C';u)$ such that $d(c_1,c_2)=2$. Suppose to the contrary that there exists a word $v \in \F_q^n$ such that the pairwise distance of any two codewords of $I(C;v)$ is one. By the construction of $C'$, we have $I(C';v) = \{c_1, c_2, c_3\}$, where $c_1 \in C$, $c_2 \in e_1 + C$ and $c_3 \in e_2 + C$. Therefore, as $d(c_1, c_2) = 1$ and $d(c_1, c_3) = 1$, we respectively have $c_2 = c_1 + e_1$ and $c_3 = c_1 + e_2$. However, then a contradiction follows since $d(c_2, c_3) = 2$. Thus, the code $C'$ is self-identifying in $\F_q^n$ with $|C'| = 3q^{n-k}$ (as $|C| = q^{n-k}$).

On the other hand, if $D$ is an arbitrary self-identifying code in $\F_q^n$, then each word of $\F_q^n$ is covered by at least three codewords of $D$ by Theorem~\ref{ThmHammingSIDChar}. Therefore, using a double counting argument similar to the proof of Theorem~\ref{SLD alaraja}, we obtain that $(n(q-1)+1)|D| \geq 3q^n$ which further implies $|D| \geq 3q^{n-k}$. Therefore, the self-identifying code $C'$ in $\F_q^n$ is optimal as $|C'| = 3q^{n-k}$. This concludes the proof of the claim.
\end{proof}

For the following theorem, we recall the notation of the \emph{direct sum}: for any codes $C_1 \subseteq \F_q^{n_1}$ and $C_2 \subseteq \F_q^{n_2}$, where $n_1$ and $n_2$ are positive integers, we denote
\[
C_1 \oplus C_2 = \{ (c_1, c_2) \mid c_1 \in C_1, c_2 \in C_2 \} \text{.}
\]
In the following theorem, we present a simple method of constructing self-identifying codes in $\F_q^{n+1}$ based on the ones in $\F_q^{n}$.
\begin{theorem}
If $C$ is a self-identifying code in $\F^n_q$, then $C \oplus \F_q$ is a self-identifying code in $\F^{n+1}_q$.
\end{theorem}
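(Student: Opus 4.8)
The plan is to use the characterization of self-identifying codes in $\F_q^n$ provided by Theorem~\ref{ThmHammingSIDChar}, which reduces the problem to verifying two local conditions for every word: that each word is covered by at least three codewords, and that among those covering codewords there exists a pair at Hamming distance exactly $2$. First I would set up notation: take an arbitrary word $(u,s) \in \F_q^{n+1}$, where $u \in \F_q^n$ and $s \in \F_q$ is the new coordinate, and describe explicitly which codewords of $C \oplus \F_q$ lie in its closed neighbourhood. The key structural observation is that $(c,t) \in N[(u,s)]$ if and only if either $c = u$ (with $t \in \F_q$ arbitrary, giving the whole ``column'' above $u$) or $c \in N[u]$ in $\F_q^n$ together with $t = s$.

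Next I would translate the self-identifying hypothesis on $C$ into information about $(u,s)$. Since $C$ is self-identifying in $\F_q^n$, Theorem~\ref{ThmHammingSIDChar} gives that $|I(C;u)| \geq 3$ and that there exist $c_1, c_2 \in I(C;u)$ with $d(c_1,c_2) = 2$. For each such $c \in I(C;u)$, the codeword $(c,s)$ lies in $I(C \oplus \F_q;(u,s))$, so the new $I$-set again contains at least three codewords, namely $(c_1,s), (c_2,s)$ and a third $(c_3,s)$. Crucially, appending the same coordinate $s$ to $c_1$ and $c_2$ preserves the Hamming distance, so $d((c_1,s),(c_2,s)) = d(c_1,c_2) = 2$. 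Both conditions of Theorem~\ref{ThmHammingSIDChar} therefore hold for $(u,s)$, and since $(u,s)$ was arbitrary, $C \oplus \F_q$ is self-identifying in $\F_q^{n+1}$.

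I do not expect any serious obstacle here; this is a clean ``lifting'' argument where the distance-$2$ witness survives untouched because the appended coordinates agree. The only point requiring a moment of care is confirming that the pair $c_1, c_2$ realizing distance $2$ in $\F_q^n$ is genuinely carried into the lifted $I$-set with the matching last coordinate $s$, rather than accidentally relying on codewords of the form $(u,t)$ with $t \neq s$ whose distance behaviour would differ. Because the hypothesis already supplies the needed pair \emph{inside} $I(C;u)$, we never have to invoke the extra ``column'' codewords to satisfy the distance condition, so the verification goes through directly from Theorem~\ref{ThmHammingSIDChar} applied in both directions.
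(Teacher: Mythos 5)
Your proposal is correct and follows essentially the same route as the paper's own proof: both apply the characterization of Theorem~\ref{ThmHammingSIDChar} to $u$, lift the three covering codewords $c_1,c_2,c_3$ by appending the common last coordinate $s$, observe that the distance-$2$ pair is preserved, and invoke the characterization again in $\F_q^{n+1}$. The extra discussion of the neighbourhood structure and of why the ``column'' codewords $(u,t)$, $t \neq s$, are not needed is a harmless elaboration of the same argument.
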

\begin{proof}
Let $C$ be a self-identifying code in $\F^n_q$ and $u = (u_1, u_2)$ be a word of $\F_q^{n+1}$ such that $u_1 \in \F_q^n$ and $u_2 \in \F_q$. By Theorem~\ref{ThmHammingSIDChar}, the word $u_1$ is covered by at least three codewords $c_1$, $c_2$ and $c_3$ of $C$ with the additional property that the distance between two of them is equal to two. Hence, the word $u \in \F_q^{n+1}$ is covered at least by the codewords $(c_1, u_2)$, $(c_2, u_2)$ and $(c_3, u_2)$ of $C \oplus \F_q$, and the codewords satisfy the additional property that the distance between two of them is equal to two. Therefore, by Theorem~\ref{ThmHammingSIDChar}, the code $C \oplus \F_q$ is self-identifying in $\F^{n+1}_q$.
\end{proof}

Recall that each self-identifying code is always identifying. Therefore, by the previous theorems, we have $\gamma^{ID}(\F_q^n) \leq 3q^{n-k}$ for integers $n$, $k$ and $\ell$ such that $n = (q^k-1)/(q-1) + \ell$, where $q$ is a prime power. This significantly improves over the previous upper bound $\gamma^{ID}(\F_q^n) \leq q^{n-1}$ by Goddard and Wash~\cite{GWIDcpg}; however, recall that they do not require that $q$ is a prime power. In what follows, we introduce another way to construct identifying codes based on self-locating-dominating codes in $\F_q^n$. We first begin by presenting a characterization for self-locating-dominating codes in $\F_q^n$.
\begin{theorem} \label{ThmHammingSLDChar}
A code $C$ is self-locating-dominating in $\F^n_q$ if and only if for each word $u \in \F^n_q \setminus C$ we have $|I(C;u)| \geq 3$ and there exist $c_1, c_2 \in I(C;u)$ such that $d(c_1,c_2)=2$.
\end{theorem}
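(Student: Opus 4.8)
The plan is to mirror the proof of Theorem~\ref{ThmHammingSIDChar}, making the single change that only non-codewords $u \in \F_q^n \setminus C$ are subjected to the two requirements; this matches the way Definition~\ref{SLD def} restricts attention to $V \setminus C$. Both implications will rest entirely on Lemma~\ref{LemmaHammingIntersection}, just as in the self-identifying case.

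For the forward direction, I would fix an arbitrary non-codeword $u$ and assume $C$ is self-locating-dominating, so that by Definition~\ref{SLD def} we have $I(C;u) \neq \emptyset$ and $\bigcap_{c \in I(C;u)} N[c] = \{u\}$. Since $u \notin C$, every $c \in I(C;u)$ satisfies $c \neq u$ and hence $d(u,c) = 1$. I would rule out $|I(C;u)| \leq 2$ exactly as in the self-identifying argument: a single codeword gives $\bigcap_{c} N[c] = N[c_1]$, which contains $n(q-1)+1 > 1$ words, whereas two codewords $c_1, c_2$ (necessarily at distance $1$ or $2$, since both lie at distance $1$ from $u$) give $|N[c_1] \cap N[c_2]| \geq 2$ by Lemma~\ref{LemmaHammingIntersection}(i). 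Either way the intersection strictly contains $\{u\}$, a contradiction, so $|I(C;u)| \geq 3$. To force a pair at distance $2$, I would argue by contraposition: if every pairwise distance in $I(C;u)$ equalled $1$, then, writing each codeword as $u + \lambda e_k$ with $\lambda \in \F_q^*$, two codewords altering distinct coordinates would already be at distance $2$; hence they must all alter one common coordinate $k$. But then the whole line $\{u + \mu e_k \mid \mu \in \F_q\}$, of size $q \geq 2$, lies in $N[c]$ for every such $c$, again contradicting $\bigcap_{c} N[c] = \{u\}$.

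For the converse, I would assume the stated condition, pick any non-codeword $u$, and choose three codewords $c_1, c_2, c_3 \in I(C;u)$ with, say, $d(c_1,c_2) = 2$. Applying Lemma~\ref{LemmaHammingIntersection}(ii) directly yields $N[c_1] \cap N[c_2] \cap N[c_3] = \{u\}$. Since $\bigcap_{c \in I(C;u)} N[c]$ is contained in this triple intersection yet still contains $u$, it equals $\{u\}$; together with $I(C;u) \neq \emptyset$ this is precisely Definition~\ref{SLD def}, so $C$ is self-locating-dominating.

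I expect the only nonroutine point to be the geometric step in the forward direction, namely showing that three or more mutually distance-$1$ codewords of $N[u]$ are forced to lie on a common coordinate line and therefore share $q$ common closed neighbours. The remainder is a faithful transcription of the self-identifying proof, restricted to $V \setminus C$, so the argument should go through with essentially no new ideas beyond Lemma~\ref{LemmaHammingIntersection}.
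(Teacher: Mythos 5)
Your proposal is correct and takes essentially the same route as the paper: the paper's own proof simply observes that self-location-domination is the self-identifying condition restricted to non-codewords and then invokes an argument ``analogous to the proof of Theorem~\ref{ThmHammingSIDChar}'', which is precisely the transcription you carry out (including the common-coordinate-line step that the paper's self-identifying proof states only implicitly). Your write-up is just a more explicit version of the intended argument, resting on the same Lemma~\ref{LemmaHammingIntersection}.
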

\begin{proof}
Recall that a code $C$ is self-locating-dominating if for all $u \in \F_q^n \setminus C$
\[
\bigcap_{c \in I(C;u)} N[c] = \{u \} \text{.}
\]
By Theorem~\ref{ThmSIDChar}, a code is self-identifying if and only if the same condition is satisfied for all words $u \in \F_q^n$. Hence, the claim follows by an argument analogous to the proof of Theorem~\ref{ThmHammingSIDChar}.
\end{proof}

In the following theorem, we present an infinite family of optimal self-locating-dominating codes in $\F_q^n$.
\begin{theorem} \label{ThmHammingSLDOptimalConstruction}
Let $q$ be a prime power and let $n$ and $k$ be integers such that $n = 3(q^k-1)/(q-1)$. Assume that $H$ is a $k \times n$ parity check matrix formed from the $k \times (n/3)$ parity check matrix of the Hamming code by repeating each column three times. Now the code $C$ corresponding to the parity check matrix $H$ is an optimal self-locating-dominating code in $\F^n_q$ with cardinality $q^{n-k}$.
\end{theorem}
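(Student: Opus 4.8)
The plan is to verify the two characterizing properties from Theorem~\ref{ThmHammingSLDChar} for the code $C = \{u \in \F_q^n \mid Hu^T = \nolla\}$, and then to establish optimality by a double counting argument analogous to the one in Theorem~\ref{SLD alaraja}. First I would settle the cardinality. Writing $m = n/3 = (q^k-1)/(q-1)$, the matrix $H$ consists of the $m$ columns $h_1,\dots,h_m$ of the Hamming parity check matrix, each repeated three times. Since repeating columns does not change the column space and the Hamming parity check matrix has rank $k$, the rank of $H$ is also $k$. Hence $\dim C = n-k$ and $|C| = q^{n-k}$, as claimed.

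The key structural observation is that, grouping the coordinates into the $m$ triples $(3i-2,3i-1,3i)$ that share the column $h_i$, the syndrome of a word $u$ depends only on the triple sums:
\[
Hu^T = \sum_{i=1}^{m}\bigl(u_{3i-2}+u_{3i-1}+u_{3i}\bigr)\,h_i \text{.}
\]
Now I would fix a non-codeword $u \in \F_q^n \setminus C$, so that $s := Hu^T \neq \nolla$. A word $c = u + \lambda e_j$ with $\lambda \in \F_q^*$ lies in $C$ precisely when $\lambda h_{i(j)} = -s$, where $i(j)$ is the triple containing coordinate $j$. Because the columns $h_1,\dots,h_m$ are representatives of the distinct projective points of $\F_q^k$, the nonzero vector $-s$ is a nonzero scalar multiple $-s = \mu h_{i_0}$ of a \emph{unique} column $h_{i_0}$. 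Consequently the equation $\lambda h_{i(j)} = -s$ forces $i(j) = i_0$ and then $\lambda = \mu$, so the codewords in $N[u]$ are exactly the three words $u + \mu e_j$ with $j$ ranging over the three coordinates of the $i_0$-th triple. Thus $|I(C;u)| = 3$; moreover any two of these, say $u+\mu e_{j_1}$ and $u+\mu e_{j_2}$, differ from each other in the two coordinates $j_1,j_2$, so their Hamming distance is $2$. By Theorem~\ref{ThmHammingSLDChar}, $C$ is self-locating-dominating.

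For optimality I would argue exactly as in Theorem~\ref{SLD alaraja} and the lower bound part of Theorem~\ref{ThmHammingSIDOptimalConstruction}: by the characterization every non-codeword has at least three codewords in its closed neighbourhood and every codeword has at least one (itself), so double counting the pairs $(c,x)$ with $c \in C$, $x \in N[c]$ gives $|C|\,(n(q-1)+1) \geq 3(q^n - |C|) + |C|$, i.e.\ $|C|\,(n(q-1)+3) \geq 3q^n$. Since $n(q-1) = 3(q^k-1)$ we have $n(q-1)+3 = 3q^k$, whence $|C| \geq q^{n-k}$, matching the cardinality of the constructed code. The only genuinely delicate point is the syndrome analysis of the second paragraph: everything hinges on the fact that the repeated columns represent \emph{distinct} projective points, which is what pins the three correcting coordinates down to a single triple and thereby simultaneously yields $|I(C;u)| = 3$ and the required distance-$2$ pair. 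The remaining steps are routine rank and double-counting computations.
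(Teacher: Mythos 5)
Your proposal is correct and follows essentially the same route as the paper's proof: a syndrome analysis showing every non-codeword is covered by exactly three codewords, pairwise at Hamming distance $2$ (the paper phrases the uniqueness of the correcting triple via the construction of $H$, you via uniqueness of projective-point representatives), followed by the characterization of Theorem~\ref{ThmHammingSLDChar} and the same double-counting lower bound. Your write-up is in fact slightly more explicit than the paper's on the cardinality (rank argument) and on why the three covering codewords lie in a single column-triple, but these are elaborations of the same argument, not a different approach.
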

\begin{proof}
Let $u$ be a word of $\F_q^n$. Now we obtain the following observations:
\begin{itemize}
\item Suppose that $Hu^T = x \in \F_q^k$ and $x \neq \nolla$. Due to the construction of the parity check matrix $H$, there exist exactly three columns $h_{i_1}$, $h_{i_2}$ and $h_{i_3}$ of $H$ such that $x = \lambda h_{i_1} = \lambda h_{i_2} = \lambda h_{i_3}$ for some $\lambda \in \F_q$. Hence, there exist exactly three words $\lambda e_{i_1}$, $\lambda e_{i_2}$ and $\lambda e_{i_3}$ of weight one in $\F_q^n$ such that the indices $i_j$ are all different and $H(u+\lambda e_{i_j})^T = \nolla$, i.e., $u + \lambda e_{i_j}$ belongs to $C$. Therefore, the word $u$ is covered by exactly three codewords of $C$ in $\F_q^n$. Moreover, the distance between any of these codewords is equal to two.
\item If $Hu^T = \nolla \in \F_q^k$, then analogously to the previous case we can observe that $u \in C$ is covered by exactly one codeword of $C$ in $\F_q^n$; namely, by itself.
\end{itemize}
Thus, by the previous observations, we know that $I(C;u) = \{u \}$ if $u \in C$ and for non-codewords $u \in \F_q^n \setminus C$ we have $|I(C;u)|=3$ with the additional property that the distance between any two codewords of $I(C;u)$ is equal to two. Therefore, by Theorem~\ref{ThmHammingSLDChar}, the code $C$ is self-locating-dominating in $\F_q^n$. Moreover, it is easy to calculate that $|C| = q^{n-k}$.

On the other hand, if $D$ is an arbitrary self-locating-dominating code in $\F_q^n$, then each word of $\F_q^n \setminus D$ is covered by at least three codewords of $D$ by Theorem~\ref{ThmHammingSLDChar}. Therefore, using a double counting argument similar to the proof of Theorem~\ref{ThmHammingSIDOptimalConstruction}, we obtain that $(n(q-1)+1)|D| \geq 3(q^n-|D|) + |D|$ which further implies $|D| \geq q^{n-k}$. Therefore, the self-locating-dominating code $C$ in $\F_q^n$ is optimal as $|C| = q^{n-k}$. This concludes the proof of the claim.
\end{proof}

In a similar way, we can also construct self-locating-dominating codes (albeit not optimal) for other lengths $n$.
\begin{theorem} \label{ThmHammingSLDGeneralConstruction}
Let $q$ be a prime power and let $n$, $k$ and $\ell$ be integers such that $n = 3(q^k-1)/(q-1) + \ell$. Assume that $H$ is a $k \times n$ parity check matrix formed from the $k \times ((n-\ell)/3)$ parity check matrix of the Hamming code by repeating the first column $\ell +3$ times and each other column three times. Now the code $C$ corresponding to the parity check matrix $H$ is self-locating-dominating in $\F^n_q$ with cardinality $q^{n-k}$.
\end{theorem}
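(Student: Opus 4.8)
The plan is to mirror the proof of Theorem~\ref{ThmHammingSLDOptimalConstruction}, invoking the characterization of Theorem~\ref{ThmHammingSLDChar}: it suffices to show that every non-codeword $u \in \F_q^n \setminus C$ satisfies $|I(C;u)| \geq 3$ and contains two codewords at distance exactly two. The only structural difference from the optimal construction is that one column of the base Hamming parity check matrix is now repeated $\ell+3$ times rather than three times, and the decisive point will be that this larger multiplicity is still at least three, so the whole argument carries over essentially verbatim.

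First I would analyse the syndrome $s = Hu^T \in \F_q^k$ of a non-codeword $u$, so that $s \neq \nolla$. Since the columns of the underlying Hamming parity check matrix are representatives of the distinct one-dimensional subspaces of $\F_q^k$, there is a unique base column $h$ with $-s = \lambda_0 h$ for a unique scalar $\lambda_0 \in \F_q^*$. A word $u + \lambda e_i$ lies in $C$ precisely when $\lambda h_i = -s$, and because every column of $H$ is an exact copy of some base column lying in a single equivalence class, the columns $h_i$ proportional to $-s$ are exactly the copies of $h$; for each such position $i$ the scalar is forced to equal $\lambda_0$. Hence $I(C;u)$ consists exactly of the words $u + \lambda_0 e_i$, where $i$ ranges over the positions that are copies of $h$.

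The counting step then follows: if $h$ is not the repeated first column there are three such copies, while if $h$ is the first column there are $\ell+3 \geq 3$ of them, so in all cases $|I(C;u)| \geq 3$. Moreover, any two of these codewords, $u + \lambda_0 e_i$ and $u + \lambda_0 e_j$ with $i \neq j$, differ in exactly the two coordinates $i$ and $j$ (both nonzero since $\lambda_0 \neq 0$), so $d(u + \lambda_0 e_i, u + \lambda_0 e_j) = 2$. Thus the hypotheses of Theorem~\ref{ThmHammingSLDChar} are met and $C$ is self-locating-dominating.

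Finally, for the cardinality I would observe that repeating columns does not change the column space of $H$, so $H$ retains rank $k$ (the base Hamming matrix has full row rank $k$). Consequently $C$ has dimension $n-k$ and $|C| = q^{n-k}$. I expect no genuine obstacle here: the entire content is confirming that the extra repetitions of the first column merely enlarge one $I$-set from three codewords to $\ell+3$ codewords, without destroying either the bound $|I(C;u)| \geq 3$ or the distance-two pair, and that the rank is unaffected by duplicating columns. Note that, unlike in Theorem~\ref{ThmHammingSLDOptimalConstruction}, the resulting code need not be optimal, which is precisely why the statement asserts only the cardinality and not optimality.
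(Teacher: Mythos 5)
Your proposal is correct and follows essentially the same route as the paper: the paper's own proof of this theorem is just the remark that the argument of Theorem~\ref{ThmHammingSLDOptimalConstruction} carries over, and your syndrome analysis (the $I$-set of a non-codeword $u$ with $Hu^T = -\lambda_0 h$ consists of $u+\lambda_0 e_i$ over the copies of $h$, giving $3$ or $\ell+3$ codewords pairwise at distance two, then applying Theorem~\ref{ThmHammingSLDChar}) is exactly that argument, with the rank-$k$ observation correctly handling the cardinality.
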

\begin{proof}
The proof of the claim is similar to the one of Theorem~\ref{ThmHammingSLDOptimalConstruction}.
\end{proof}

Observe that the codes $C \subseteq \F^n_q$ constructed in Theorems~\ref{ThmHammingSLDOptimalConstruction} and \ref{ThmHammingSLDGeneralConstruction} are such that for each codeword $c \in V$ we have $I(c) = \{c\}$ and for non-codewords $u \in \F^n_q \setminus C$ we have
\[
\bigcap_{c \in I(u)} N[c] = \{u \} \text{.}
\]
Therefore, all the constructed self-locating-dominating codes are also identifying in $\F^n_q$. Hence, we have $\gamma^{ID}(\F_q^n) \leq q^{n-k}$ for all integers $n$, $k$ and $\ell$ such that $n = 3(q^k-1)/(q-1) + \ell$, where $q$ is a prime power. Thus, using the constructions based on the self-locating-dominating codes, we are able to significantly improve the previous upper bound $\gamma^{ID}(\F_q^n) \leq q^{n-1}$ by~\cite{GWIDcpg} (recall again that in~\cite{GWIDcpg} it is not required that $q$ is a prime power).

In~\cite{GWIDcpg}, it is stated that the best known lower bound for identifying codes in $\F_q^n$ is the following one by Karpovsky \emph{et al.}~\cite{kcl}.
\begin{theorem}[\cite{kcl}]
We have
\[
\gamma^{ID}(\F_q^n) \geq \frac{2q^n}{nq-n+2} \text{.}
\]
\end{theorem}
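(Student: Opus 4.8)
The plan is to use the standard \emph{share} argument (the relative-density method originating with Karpovsky, Chakrabarty and Levitin), which turns the two defining properties of an identifying code---domination and separation---into a single pointwise estimate on a normalized weight carried by each codeword. For each codeword $c \in C$ I would define its share
\[
s(c) = \sum_{u \in N[c]} \frac{1}{|I(C;u)|} \text{,}
\]
which is well defined precisely because $C$ is dominating, so $|I(C;u)| \geq 1$ for every $u \in \F_q^n$. The first step is to evaluate the total share by exchanging the order of summation: since $u \in N[c]$ is equivalent to $c \in N[u] \cap C = I(C;u)$, each vertex $u$ is counted once for every codeword of $I(C;u)$, contributing $|I(C;u)| \cdot \frac{1}{|I(C;u)|} = 1$, whence
\[
\sum_{c \in C} s(c) = \sum_{u \in \F_q^n} 1 = q^n \text{.}
\]

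The second step is to bound $s(c)$ from above for a fixed codeword $c$. In $\F_q^n$ every vertex has degree $n(q-1)$, so $|N[c]| = n(q-1)+1 = nq-n+1$. The crucial observation is that \emph{at most one} vertex $u \in N[c]$ can satisfy $|I(C;u)| = 1$: if $|I(C;u)| = 1$ and $c \in I(C;u)$, then necessarily $I(C;u) = \{c\}$, and if two distinct vertices $u, u' \in N[c]$ both had $I(C;u) = I(C;u') = \{c\}$, this would contradict the separation property $I(C;u) \neq I(C;u')$ required of an identifying code. Every other vertex of $N[c]$ therefore has $|I(C;u)| \geq 2$ and contributes at most $\tfrac{1}{2}$ to the share, giving
\[
s(c) \leq 1 + \bigl(|N[c]| - 1\bigr)\cdot \frac{1}{2} = 1 + \frac{n(q-1)}{2} = \frac{nq-n+2}{2} \text{.}
\]

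Summing this estimate over all $c \in C$ and comparing with the total computed in the first step yields $|C| \cdot \frac{nq-n+2}{2} \geq q^n$, which rearranges to the claimed inequality $\gamma^{ID}(\F_q^n) \geq \frac{2q^n}{nq-n+2}$. I expect the only genuinely nontrivial point to be the crucial observation in the second step---that a closed neighbourhood contains at most one ``uniquely dominated'' vertex---because this is exactly where the separation axiom is used rather than mere domination; the remaining manipulations (the double count and the arithmetic with $|N[c]|$) are routine bookkeeping. Note also that the argument is purely local to closed neighbourhoods and the degree $n(q-1)$, so it is insensitive to the field structure and applies verbatim to $K_q^n$ for any $q$, consistent with the statement being attributed to~\cite{kcl} without the prime-power hypothesis.
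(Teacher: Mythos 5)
Your proof is correct. The paper does not prove this theorem at all---it is imported verbatim from the cited reference \cite{kcl}---so there is no internal proof to compare against; your share argument is the standard (and valid) one, and it is essentially a weighted reformulation of the classical double count: summing your local observation that at most one vertex $u \in N[c]$ can have $I(C;u) = \{c\}$ over all codewords recovers the global fact that at most $|C|$ vertices have singleton $I$-sets, which together with $\sum_{u} |I(C;u)| \leq |C|\,(n(q-1)+1)$ yields the same bound.
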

Assume that $q$ is a prime power and $n$ and $k$ are integers such that $n = 3(q^k-1)/(q-1)$. As stated above, we now have $\gamma^{ID}(\F_q^n) \leq q^{n-k}$. Now the previous lower bound can be written as follows:
\[
\gamma^{ID}(\F_q^n) \geq \frac{2q^n}{nq-n+2} = \frac{2q^n}{3q^k-1} \geq \frac{2q^n}{3q^k} = \frac{2}{3} q^{n-k} \text{.}
\]
Hence, comparing the previous lower and upper bounds, it can be seen that they are of the same order $\Theta(q^{n-k})$. Analogously, it can be shown that the (self-)identifying codes obtained in Theorem~\ref{ThmHammingSIDOptimalConstruction} for lengths $n = (q^k-1)/(q-1)$ are also rather small compared to the lower bound above.

\bibliographystyle{abbrv}

\section*{Appendix}
 Below we give the tables which show that the code $C^1$ in Theorem \ref{C1} and the code $C_L$ in Theorem \ref{C_L} are identifying codes.

\begin{table}
\centering
{\small
\begin{tabular}{ |c|c|c|c| }
\hline
$I(1,1,1)$ & $\{(3,1,1),(1,3,1)\}$ & $I(2,1,1)$ & $\{(3,1,1),(2,1,3),(2,1,4)\}$  \\
 \hline
 $I(3,1,1)$ & $\{(3,1,1) \}$  &  $I(4,1,1)$ & $\{(3,1,1),(4,4,1),(4,1,2)\}$   \\
 \hline
$I(1,2,1)$ & $\{(1,3,1),(1,2,2),(1,2,4)\}$  & $I(2,2,1)$ & $\{(2,2,4)\}$  \\
 \hline
$I(3,2,1)$ & $\{(3,1,1),(3,2,2)\}$  & $I(4,2,1)$ & $\{(4,4,1)\}$  \\
 \hline
$I(1,3,1)$ & $\{(1,3,1)\}$  & $I(2,3,1)$ & $\{(1,3,1),(2,3,2)\}$  \\
 \hline
$I(3,3,1)$ & $\{(1,3,1),(3,3,3),(3,1,1)\}$  & $I(4,3,1)$ & $\{(1,3,1),(4,3,3),(4,4,1)\}$  \\
 \hline
$I(1,4,1)$ & $\{(1,3,1),(4,4,1)\}$  & $I(2,4,1)$ & $\{(2,4,4),(4,4,1)\}$  \\
 \hline
$I(3,4,1)$ & $\{(3,1,1),(4,4,1)\}$  & $I(4,4,1)$ & $\{(4,4,1),(4,4,3)\}$  \\
 \hline
$I(1,1,2)$ & $\{(1,2,2),(4,1,2)\}$  & $I(2,1,2)$ & $\{(2,1,3),(2,1,4),(2,3,2),(4,1,2)\}$  \\
 \hline
$I(3,1,2)$ &  $\{(3,1,1),(3,2,2),(4,1,2)\}$  & $I(4,1,2)$ & $\{(4,1,2)\}$  \\
 \hline
$I(1,2,2)$ & $\{(1,2,2),(1,2,4),(3,2,2)\}$  & $I(2,2,2)$ & $\{(1,2,2),(2,2,4),(2,3,2),(3,2,2)\}$  \\
 \hline
$I(3,2,2)$ & $\{(1,2,2),(3,2,2)\}$  & $I(4,2,2)$ & $\{(1,2,2),(3,2,2),(4,1,2)\}$  \\
 \hline
$I(1,3,2)$ & $\{(1,2,2),(1,3,1),(2,3,2)\}$  & $I(2,3,2)$ & $\{(2,3,2)\}$  \\
 \hline
$I(3,3,2)$ & $\{(2,3,2),(3,2,2),(3,3,3)\}$  & $I(4,3,2)$ & $\{(2,3,2),(4,1,2),(4,3,3)\}$  \\
 \hline
$I(1,4,2)$ & $\{(1,2,2)\}$  & $I(2,4,2)$  & $\{(2,3,2),(2,4,4)\}$  \\
 \hline
$I(3,4,2)$ & $\{(3,2,2)\}$  & $I(4,4,2)$ & $\{(4,1,2),(4,4,1),(4,4,3)\}$  \\
 \hline
$I(1,1,3)$ & $\{(2,1,3)\}$  & $I(2,1,3)$ & $\{(2,1,3),(2,1,4)\}$  \\
\hline
$I(3,1,3)$ & $\{(2,1,3),(3,1,1),(3,3,3)\}$  & $I(4,1,3)$ & $\{(2,1,3),(4,1,2),(4,3,3),(4,4,3)\}$  \\
 \hline
$I(1,2,3)$ & $\{(1,2,2),(1,2,4)\}$  & $I(2,2,3)$ & $\{(2,1,3),(2,2,4)\}$  \\
 \hline
$I(3,2,3)$ & $\{(3,2,2),(3,3,3)\}$  & $I(4,2,3)$ & $\{(4,3,3),(4,4,3)\}$  \\
 \hline
$I(1,3,3)$ & $\{(1,3,1),(3,3,3),(4,3,3)\}$  & $I(2,3,3)$ & $\{(2,1,3),(2,3,2),(3,3,3),(4,3,3)\}$  \\
\hline
$I(3,3,3)$ & $\{(3,3,3),(4,3,3)\}$  & $I(4,3,3)$ & $\{(3,3,3),(4,3,3),(4,4,3)\}$  \\
 \hline
$I(1,4,3)$ & $\{(4,4,3)\}$  & $I(2,4,3)$ & $\{(2,1,3),(2,4,4),(4,4,3)\}$  \\
 \hline
$I(3,4,3)$ & $\{(3,3,3),(4,4,3)\}$  & $I(4,4,3)$ & $\{(4,3,3),(4,4,1),(4,4,3)\}$  \\
 \hline
$I(1,1,4)$ & $\{(1,2,4),(2,1,4)\}$  & $I(2,1,4)$ & $\{(2,1,3),(2,1,4),(2,2,4),(2,4,4)\}$  \\
\hline
$I(3,1,4)$ & $\{(2,1,4),(3,1,1)\}$  & $I(4,1,4)$ & $\{(2,1,4),(4,1,2)\}$  \\
 \hline
$I(1,2,4)$ & $\{(1,2,2),(1,2,4),(2,2,4)\}$  & $I(2,2,4)$ & $\{(1,2,4),(2,1,4),(2,2,4),(2,4,4)\}$  \\
 \hline
$I(3,2,4)$ & $\{(1,2,4),(2,2,4),(3,2,2)\}$  & $I(4,2,4)$ & $\{(1,2,4),(2,2,4)\}$  \\
 \hline
$I(1,3,4)$ & $\{(1,2,4),(1,3,1)\}$  & $I(2,3,4)$ & $\{(2,1,4),(2,2,4),(2,3,2),(2,4,4)\}$  \\
\hline
$I(3,3,4)$ & $\{(3,3,3)\}$  & $I(4,3,4)$ & $\{(4,3,3)\}$  \\
 \hline
$I(1,4,4)$ & $\{(1,2,4),(2,4,4)\}$  & $I(2,4,4)$ & $\{(2,1,4),(2,2,4),(2,4,4)\}$  \\
 \hline
$I(3,4,4)$ & $\{(2,4,4)\}$  & $I(4,4,4)$ & $\{(2,4,4),(4,4,1),(4,4,3)\}$  \\
 \hline

\end{tabular}
}

 \caption{$I$-sets of code $C^1$.}\label{C^1 table}

\end{table}

\begin{table}
\centering
{\small
\begin{tabular}{ |c|c|c|c| }
\hline
$I(1,1,1)$ & $\emptyset$ & $I(2,1,1)$ &  $\{ (2,1,3), (2,4,1) \}$  \\
 \hline
$I(3,1,1)$ & $\{(3,1,4),(3,2,1) \}$  & $I(4,1,1)$ & $\{(4,1,2), (4,3,1)\}$  \\
 \hline
$I(1,2,1)$ & $\{(1,2,4), (3,2,1) \}$  & $I(2,2,1)$ & $\{(2,4,1), (3,2,1)  \}$  \\
 \hline
\cellcolor{lightgray}$I(3,2,1)$ & \cellcolor{lightgray}$\{(3,2,1) \}$  & $I(4,2,1)$ & $\{(3,2,1),(4,2,3),(4,3,1)  \}$  \\
 \hline
$I(1,3,1)$ & $\{(1,3,2),(4,3,1) \}$  & $I(2,3,1)$ & $\{(2,3,4), (2,4,1),(4,3,1) \}$  \\
 \hline
$I(3,3,1)$ & $\{(3,2,1),(4,3,1) \}$  & \cellcolor{lightgray}$I(4,3,1)$ & \cellcolor{lightgray}$\{(4,3,1)\}$  \\
 \hline
$I(1,4,1)$ & $\{(1,4,3), (2,4,1) \}$  & \cellcolor{lightgray}$I(2,4,1)$ &\cellcolor{lightgray} $\{ (2,4,1) \}$  \\
 \hline
$I(3,4,1)$ & $\{(2,4,1), (3,2,1), (3,4,2)  \}$  & $I(4,4,1)$ & $\{(2,4,1),(4,3,1) \}$  \\
 \hline
$I(1,1,2)$ & $\{(1,3,2),(4,1,2)  \}$  & $I(2,1,2)$ & $\{ (2,1,3), (4,1,2)  \}$  \\
 \hline
$I(3,1,2)$ & $\{(3,1,4),(3,4,2),(4,1,2)   \}$  &\cellcolor{lightgray} $I(4,1,2)$ &\cellcolor{lightgray} $\{(4,1,2)\}$  \\
 \hline
$I(1,2,2)$ & $\{(1,2,4),(1,3,2)  \}$  & $I(2,2,2)$ & $\emptyset$  \\
 \hline
$I(3,2,2)$ & $\{(3,2,1), (3,4,2) \}$  & $I(4,2,2)$ & $\{(4,1,2),(4,2,3)  \}$  \\
 \hline
\cellcolor{lightgray}$I(1,3,2)$ &\cellcolor{lightgray} $\{(1,3,2) \}$  & $I(2,3,2)$ & $\{(1,3,2), (2,3,4) \}$  \\
 \hline
$I(3,3,2)$ & $\{(1,3,2), (3,4,2) \}$  & $I(4,3,2)$ & $\{(1,3,2),(4,1,2), (4,3,1) \}$  \\
 \hline
$I(1,4,2)$ & $\{(1,3,2), (1,4,3),(3,4,2) \}$  & $I(2,4,2)$ & $\{(2,4,1),(3,4,2)  \}$  \\
 \hline
\cellcolor{lightgray}$I(3,4,2)$ &\cellcolor{lightgray} $\{(3,4,2)\}$  & $I(4,4,2)$ & $\{(3,4,2),(4,1,2)  \}$  \\
 \hline
$I(1,1,3)$ & $\{(1,4,3), (2,1,3) \}$  &\cellcolor{lightgray} $I(2,1,3)$ &\cellcolor{lightgray} $\{ (2,1,3) \}$  \\
\hline
$I(3,1,3)$ & $\{(2,1,3), (3,1,4)  \}$  & $I(4,1,3)$ & $\{ (2,1,3),(4,1,2),(4,2,3) \}$  \\
 \hline
$I(1,2,3)$ & $\{(1,2,4), (1,4,3),(4,2,3) \}$  & $I(2,2,3)$ & $\{ (2,1,3),(4,2,3)  \}$  \\
 \hline
$I(3,2,3)$ & $\{(3,2,1),(4,2,3)  \}$  &\cellcolor{lightgray} $I(4,2,3)$ & \cellcolor{lightgray}$\{(4,2,3)\}$  \\
 \hline
$I(1,3,3)$ & $\{(1,3,2), (1,4,3)\}$  & $I(2,3,3)$ & $\{ (2,1,3),(2,3,4)  \}$  \\
\hline
$I(3,3,3)$ & $\emptyset$  & $I(4,3,3)$ & $\{(4,2,3),(4,3,1) \}$  \\
 \hline
\cellcolor{lightgray}$I(1,4,3)$ & \cellcolor{lightgray}$\{(1,4,3)\}$  & $I(2,4,3)$ & $\{(1,4,3), (2,1,3), (2,4,1)  \}$  \\
 \hline
$I(3,4,3)$ & $\{(1,4,3),(3,4,2) \}$  & $I(4,4,3)$ & $\{(1,4,3),(4,2,3) \}$  \\
 \hline
$I(1,1,4)$ & $\{(1,2,4), (3,1,4) \}$  & $I(2,1,4)$ & $\{ (2,1,3),(2,3,4),  (3,1,4)  \}$  \\
\hline
\cellcolor{lightgray}$I(3,1,4)$ &\cellcolor{lightgray} $\{(3,1,4) \}$  & $I(4,1,4)$ & $\{(3,1,4), (4,1,2)  \}$  \\
 \hline
\cellcolor{lightgray}$I(1,2,4)$ & \cellcolor{lightgray}$\{(1,2,4) \}$  & $I(2,2,4)$ & $\{(1,2,4),(2,3,4)  \}$  \\
 \hline
$I(3,2,4)$ & $\{(1,2,4),  (3,1,4), (3,2,1) \}$  & $I(4,2,4)$ & $\{(1,2,4), (4,2,3) \}$  \\
 \hline
$I(1,3,4)$ & $\{(1,2,4), (1,3,2), (2,3,4) \}$  & \cellcolor{lightgray}$I(2,3,4)$ &\cellcolor{lightgray} $\{(2,3,4) \}$  \\
\hline
$I(3,3,4)$ & $\{(2,3,4),  (3,1,4) \}$  & $I(4,3,4)$ & $\{(2,3,4),(4,3,1) \}$  \\
 \hline
$I(1,4,4)$ & $\{(1,2,4), (1,4,3)\}$  & $I(2,4,4)$ & $\{(2,3,4),  (2,4,1) \}$  \\
 \hline
$I(3,4,4)$ & $\{(3,1,4),(3,4,2)  \}$  & $I(4,4,4)$ & $\emptyset$  \\
 \hline
\end{tabular}
}
 \caption{$I$-sets of code $C_L$, the codewords are highlighted. The empty $I$-sets belong to the diagonal vertices.}\label{C_L table}

\end{table}

\end{document}